\DeclareMathAlphabet{\mathcal}{OMS}{cmsy}{m}{n}
\theoremstyle{plain}
  \newtheorem{thm}{Theorem}[section]
  \newtheorem*{thm*}{Theorem}
  \newtheorem{conj}[thm]{Conjecture}
  \newtheorem{prop}[thm]{Proposition}
  \newtheorem{cor}[thm]{Corollary}
  \newtheorem{lem}[thm]{Lemma}
\theoremstyle{definition}
  \newtheorem{defn}[thm]{Definition}
  \newtheorem{exmp}[thm]{Example}
  \newtheorem{rem}[thm]{Remark}
\theoremstyle{remark}
\DeclareMathOperator{\Cl}{Cl}
\DeclareMathOperator{\Sym}{Sym}
\DeclareMathOperator{\disc}{disc}
\DeclareMathOperator{\Mat}{Mat}
\DeclareMathOperator{\Tr}{Tr}
\def\Z{{\mathbb Z}}
\def\T{{\mathcal T}}
\def\W{{\mathcal W}}
\def\irr{{\rm irr}}
\def\GL{{\rm GL}}
\def\SL{{\rm SL}}
\def\Cl{{\rm Cl}}
\def\Sym{{\rm Sym}}
\def\P{{\mathbb P}}
\def\disc{{\rm disc}}
\def\Vol{{\rm vol}}
\def\R{{\mathbb R}}
\def\C{{\mathbb C}}
\def\F{{\mathbb F}}
\def\FF{{\mathcal F}}
\def\I{{\mathcal I}}
\def\Q{{\mathbb Q}}
\def\V{{\mathcal V}}
\def\Z{{\mathbb Z}}
\def\P{{\mathbb P}}
\def\F{{\mathbb F}}
\def\Q{{\mathbb Q}}
\def\O{{\mathcal O}}
\def\factor{(-1)^{(n-1)/2}}
\def\factoralt{(-1)^{(n-1)/2}}
\def\SS{\Sigma}
\def\diag{{\mathrm{diag}}}
\newcommand{\SO}{\mathrm{SO}}
\newcommand{\E}{\mathcal{E}}
\newcommand{\J}{\mathcal{J}}
\newcommand{\nc}{\newcommand}
\nc{\on}{\operatorname}
\nc{\renc}{\renewcommand}
\nc{\wt}{\widetilde}
\nc{\defeq}{\vcentcolon=}
\nc{\eqdef}{=\vcentcolon}
\nc{\Spec}{\on{Spec}}
\nc{\ol}{\overline}
\renc{\d}{\partial}
\title{Monogenicity and $2$-torsion in the class group of number fields of odd degree}
\author{Artane Siad}
\date{June 2, 2026}
\begin{document}

\maketitle

\begin{abstract}
We study the average $2$-torsion in the class group of monogenised fields of odd degree. Bhargava--Hanke--Shankar \cite{BhargavaHankeShankar} have recently shown that for a fixed signature, the average number of non-trivial $2$-torsion elements in the class group of monogenised cubic fields is exactly twice the value predicted by the Cohen--Lenstra--Martinet--Malle heuristic over the full $S_3$ family. For any odd degree $n \ge 3$ and signature, we prove that the average number of non-trivial $2$-torsion elements in the class group of monogenised fields is at most twice the value predicted by the Cohen--Lenstra--Martinet--Malle heuristic over the full $S_n$ family. Conditional on a tail estimate for $n \ge 5$, this establishes that the doubling phenomenon discovered by Bhargava--Hanke--Shankar persists across all odd degrees and signatures.
\end{abstract}

\section{Introduction}

The Cohen--Lenstra heuristics, and their generalisation by Cohen--Martinet, form our best conjectural description of the distribution, for ``good'' primes $p$, of the $p$-primary part of the class group over families of fields of a fixed degree, signature, and Galois group (of the Galois closure), ordered by reasonably natural heights. The heuristics have undergone waves of corrections, extensions, and modifications since their introduction in the 1980s thanks to the collective efforts of several authors, including notably, but certainly not exhaustively, Gerth, Martinet, Malle, Wood, Liu, Zureick-Brown, Wang, Bartel, and Johnston (see \cite{MR756082, MR866103, MR1037430, MR1226813, MR759260, MR887792, MR2778658, MR3858473, 1907.05002, MR4277275, MR4105790, 2005.11533}). Despite these significant advances, few of its predictions have been verified over number fields beyond the seminal work of Davenport--Heilbronn \cite{MR491593} on average $3$-torsion in the class group of quadratic fields and Bhargava \cite{MR3272925} on average $2$-torsion in the class group of cubic fields, both ordered by discriminant, and their generalisations \cite{MR936994, MR3369305,  MR3471250,1512.03035,2110.07712}. Additional work includes the conditional determination by Ho--Shankar--Varma \cite{MR3782066} of the average $2$-torsion in the class group of odd degree number fields associated to binary forms ordered by box height. At the same time, a string of strong results has been obtained over function fields \cite{MR2184814,MR3488737,MR3705261,MR3858473,MR3909900,1907.05002,LandesmanLevy}. 

In all the absolute cases over $\Q$ where the heuristics have been verified, they exhibit a remarkable stability under local conditions --- the averages remain the same if, instead of averaging over the full family of fields of a fixed degree and signature, we average over subfamilies defined by finitely (or, in certain cases, infinitely) many local conditions \cite{MR3471250,MR3369305}. Naturally, this raises the question of the sensitivity of the heuristics to global conditions.

Bhargava--Hanke--Shankar \cite{BhargavaHankeShankar} have recently shown that one such global condition, monogenicity --- the property of having a ring of integers generated by one element as a $\Z$-algebra --- causes the average number of non-trivial $2$-torsion elements in the class group of cubic fields of a fixed signature to double compared to the value expected by the Cohen--Lenstra--Martinet--Malle heuristics for the corresponding full $S_3$ family. In this paper, we generalise both their techniques and results to prove the following theorem, a precise version of which will be stated in \S \ref{subsec: statement of main theorem}.
\begin{thm*}
Granted a tail estimate for the lower bound in degrees $\ge 5$, the mean number of non-trivial $2$-torsion elements in the class groups of monogenised fields of any given odd degree and signature is twice the value predicted by the Cohen--Lenstra--Martinet--Malle heuristic over the corresponding full $S_n$ family.
\end{thm*}
\noindent These conditional averages (and unconditional upper bounds) stay the same if we average over subfamilies of monogenised fields defined by finitely (or, in certain cases, infinitely) many local conditions. We also obtain upper bounds and conditional averages for the narrow class group.

This result offers a new perspective on class group heuristics by raising an intriguing possibility: that global conditions skew the random behaviour of the class group in subtle, predictable ways, which encode the special arithmetic features of the conditions. Indeed, it suggests that monogenicity does not simply shift the entire distribution of the $2$-ranks of the class group by $1$, but, instead, has the more subtle effect of doubling the expected number of \emph{non-trivial} $2$-torsion elements in the class group, in every odd degree. 

The proof involves a delicate interplay between local objects at the $2$-adic and archimedean places of $\Q$, highlighting the importance of these primes for understanding the $2$-primary part of the class group, and echoing recent theoretical developments in the modelling of the $2$-Selmer group of number fields in \cite{MR3873132} and \cite{Breen}. Together with \cite[Theorem 2]{MR3782066}, our statement is among the first to deal uniformly with class group averages in infinitely many degrees and signatures.

The method developed in this paper, and detailed in \S \ref{subsec: main ideas}, is a robust tool which opens the door to studying the behaviour of $2$-torsion in the class group of several families of interest to arithmetic statistics. For example, in \cite{Siadeven} we build on the work of this paper to prove the existence in any even degree at least $4$ of infinitely many number fields with odd class number, closing the circle on a line of results on the parity of class numbers dating back to Gauss \cite{MR0197380}, while in the forthcoming work \cite{Siadrelative} we establish qualitative Cohen--Lenstra--Gerth statements on the existence of infinitely many fields with prescribed $2\Cl[2^\infty]$ in any even degree at least $6$.

\subsection{Statement of main theorem}
\label{subsec: statement of main theorem}

A ring $R$ is an $n$-ic ring if it is a free $\Z$-module of rank $n$. An $n$-ic ring $R$ is said to be monogenic if it is generated by a single element as a $\Z$-algebra. A monogenised $n$-ic ring is a pair $(R,\alpha)$ comprising a monogenic $n$-ic ring $R$ and a monogeniser $\alpha$ --- an $\alpha \in R$ such that $R = \mathbb{Z}[\alpha]$. Two monogenised $n$-ic rings $(R,\alpha)$ and $(R,\alpha')$ are said to be isomorphic if $R$ and $R'$ are isomorphic through a ring isomorphism sending $\alpha$ to $\alpha' + m$ for some $m \in \mathbb{Z}$. \footnote{The results of this paper are unchanged if $\pm$ is included in front of the $\alpha'$; omitting it makes the exposition clearer.} If a monogenised ring $R$ is an integral domain, then it is an order in its field of fractions $K$, and we say that $R$ is a monogenised order. If it is also maximal in $K$, we say that $(K,\alpha) := (\O_{K},\alpha)$ is a monogenised field.  \footnote{Results of Gy\H{o}ry \cite{MR437489, MR437490} and then Evertse--Gy\H{o}ry \cite{MR1117339, MR3586280, MR4394329}, generalising work of Birch--Merriman \cite{MR306119}, show that the number of monogenisers, up to transformations of the form $\alpha \mapsto \pm \alpha + m$ for $m \in \Z$, is effectively bounded in terms of the degree.} 

Fix a degree $n$ and a signature $(r_1,r_2)$ such that $r_1+2r_2 = n$. We write $\mathfrak{R}_{1}^{n}$ for the set of isomorphism classes of monogenised $n$-ic rings and $\mathfrak{R}_{1,{\rm order}}^{r_{1},r_{2}}$ (resp. $\mathfrak{R}_{1,{\rm max}}^{r_{1},r_{2}}$) for the set of isomorphism classes of monogenised orders (resp. fields) with field of fractions having signature $(r_{1},r_{2})$ (resp. of signature $(r_{1},r_{2})$). We define a naive height on $\mathfrak{R}_{1}^n$ (and thus on $\mathfrak{R}_{1,{\rm order}}^{r_{1},r_{2}}$ and $\mathfrak{R}_{1,{\rm max}}^{r_{1},r_{2}}$). Each class in $\mathfrak{R}_{1}^n$ contains a unique representative $(R,\alpha_0)$ with the property that $0 \le {\rm tr}(\alpha_0) < n$. Writing $f(x) = x^n+a_1 x^{n-1}+\ldots+a_n \in \Z[x]$ for the characteristic polynomial of $\alpha_0$ we define the naive height of the class to be $H_{\text{naive}}([(K,\alpha_0)]) = \max_i \{|a_i|^{1/i}\}$. This is equivalent to the box height on the roots of the characteristic polynomial of $\alpha_0$ as we show in \S \ref{sec: background and preliminaries}. 

To state our main theorem, we will also need the notions of \textit{acceptable} and \textit{very large} families of monogenised orders, which we now explicate. For a ring $T$, let $U(T) = {\rm Sym}^{n}(T^{2})$ denote the space of binary $n$-ic forms with coefficients in $T$. As will be explained in \S \ref{sec: background and preliminaries}, $\mathfrak{R}_{1}^{n}$ can be identified with $\bigsqcup_{0 \le b < n} U_{1,b}(\Z)$ where $U_{1,b}$ denotes the subspace of $U$ consisting of forms with $x^{n}$ coefficient $1$ and $x^{n-1}y$ coefficient equal to $b$. Under this identification, $\mathfrak{R}_{1, {\rm order}}^{r_{1},r_{2}}$ corresponds to $\bigsqcup_{0 \le b < n} U_{1,b}^{r_{2}}(\Z)^{{\rm irr}}$ where $U_{1,b}^{r_{2}}(\Z)^{{\rm irr}}$ consists of forms which are irreducible and have $r_{2}$ pairs of complex roots. Let $\widehat{\Z}$ denote the profinite integers $\prod_{p} \Z_p$. Collections of local conditions are subsets $S = \bigsqcup_{0 \le b < n} \prod_{p} S_{b,p} \subset \bigsqcup_{0 \le b < n} U_{1,b}(\widehat{\Z})$ with the property that each $S_{b,p}$ is non-empty, clopen, and has boundary with measure $0$. Such collections are said to be \textit{acceptable} if for large enough $p$, $S_{b,p}$ includes all elements of $U_{1,b}(\Z_p)$ with discriminant indivisible by $p^{2}$, and \textit{very large} if for large enough $p$, $S_{b,p}$ is equal to $U_{1,b}(\Z_p)$. Subfamilies $\SS \subset \mathfrak{R}_{1, {\rm order}}^{r_{1},r_{2}}$ whose set of associated forms is the preimage under the diagonal embedding $\bigsqcup_{0 \le b < n} U_{1,b}^{r_{2}}(\Z)^{{\rm irr}} \hookrightarrow \bigsqcup_{0 \le b < n} U_{1,b}(\widehat{\Z})$ of an acceptable (resp. very large) collection of local conditions are said to be acceptable (resp. very large). 

For an order $\O$ in a number field $K$, let $\I(\O)$ denote the group of invertible ideals of $\O$, $P(\O)$ the subgroup of principal fractional ideals of $\O$, and $P^+(\O)$ the subgroup of totally positive principal fractional ideals of $\O$ (i.e. ideals of the form $y\O$ for some $y \in K^\times$ with the property that $\sigma(y)$ is positive for any real embedding $\sigma$ of $K$). Let $\Cl(\O) = \I(\O)/P(\O)$ and $\Cl^+(\O) = \I(\O)/P^+(\O)$ denote the class group and the narrow class group of $\O$ respectively, and $\Cl(\O)[2]$ and $\Cl^+(\O)[2]$ denote their $2$-torsion subgroups. We denote by $\I_2(\O)$ the $2$-torsion subgroup of the ideal group $\I(\O)$. Note that $\I_2(\O)$ is trivial if $\O$ is maximal, but may be non-trivial otherwise. 

For a subfamily $\SS \subset \mathfrak{R}_{1, {\rm order}}^{r_1,r_2}$, we define
\[ 
{\rm Avg}(\SS,\Cl(\cdot)[2]) := \lim_{X \rightarrow \infty} \tfrac{\sum\limits_{\substack{\mathcal{O} \in \SS \\ H_{\text{naive}}(\mathcal{O}) < X}} \left| \Cl(\mathcal{O})[2] \right|}{\sum\limits_{\substack{\mathcal{O} \in \SS \\ H_{\text{naive}}(\mathcal{O}) < X}} 1}. \] 
and ${\rm Avg}(\SS,\Cl^{+}(\cdot)[2])$ analogously.

\begin{thm}[Main theorem for monogenised fields and orders]\label{main theorem}
Let $n \ge 3$ be an odd integer and $(r_1,r_2)$ a choice of signature. Let $\SS \subset \mathfrak{R}_{1, \max}^{r_1,r_2}$ be an acceptable family. The average number of $2$-torsion elements in the class group and narrow class group of fields in $\SS$ satisfy 
\begin{equation*}\label{main equation}
{\rm Avg}(\SS,\Cl(\cdot)[2]) \le 1+ \frac{2}{2^{r_1+r_2-1}} \hspace{5ex} \text{ and } \hspace{5ex} {\rm Avg}(\SS,\Cl^+(\cdot)[2]) \le 1 + \frac{1}{2^{\frac{n-1}{2}}}+\frac{1}{2^{r_2}}
\end{equation*}
with equalities conditional on the tail estimate \ref{conj: conjectural tail estimate} for $n \ge 5$. For $n =3$, these upper bounds are also unconditional equalities. For $\SS \subset \mathfrak{R}_{1,{\rm order}}^{r_1,r_2}$ a very large family with the property that the local conditions at $2$ are given modulo $2$, the average over $\O \in \SS$ of
\[
\left|\Cl(\mathcal{O})[2]\right|-\frac{1}{2^{r_1+r_2-1}}\left| \mathcal{I}_2(\mathcal{O}) \right| \hspace{5ex} \text{ and } \hspace{5ex} \left|\Cl^+(\mathcal{O})[2]\right|-\frac{1}{2^{r_2}}\left| \mathcal{I}_2(\mathcal{O}) \right|
\]
are unconditionally equal to $1+\frac{1}{2^{r_1+r_2-1}}$ and $1+\frac{1}{2^{\frac{n-1}{2}}}$ respectively. \footnote{The same holds if we also add the condition that $\SS$ contains only orders in $S_n$-fields.}
\end{thm}

In particular, Theorem \ref{main theorem} gives ${\rm Avg}(\mathfrak{R}_{1,\max}^{1,1},\Cl(\cdot)[2]) = 2$ for complex cubic fields and ${\rm Avg}(\mathfrak{R}_{1,\max}^{3,0}$, $\Cl(\cdot)[2]) = 3/2$ and ${\rm Avg}(\mathfrak{R}_{1,\max}^{3,0},\Cl^{+}(\cdot)[2])\allowbreak = 5/2$ for real cubic fields. These values match those of \cite[Theorem 4]{BhargavaHankeShankar}, although averages there are taken with respect to a slightly different height. 
The Cohen--Lenstra--Martinet--Malle heuristics predict a value of $1+1/2^{r_1+r_2-1}$ for the average number of $2$-torsion elements in the class group of all $S_n$-fields of odd degree $n$ and signature $(r_1,r_2)$. Thus, the conditional value of ${\rm Avg}(\SS,\Cl(\cdot)[2])-1$ in Theorem \ref{main theorem} is precisely twice this expected value. 
The conditional equality ${\rm Avg}(\SS,\Cl(\cdot)^+[2]) =  1 + 1/2^{\frac{n-1}{2}}+1/2^{r_2}$ in Theorem \ref{main theorem} suggests that monogenicity also has an increasing effect on the size of the narrow class group. This was already observed for cubic fields in \cite[Theorem 7]{BhargavaHankeShankar}. 

Combining our proof with the reasoning of \cite[Proof of Theorem 6.8]{MR3782066} shows that there exists an infinite family $ \Sigma' \subset \mathfrak{R}_{1, \max}^{r_1,r_2}$ (not necessarily defined by local conditions) such that any two fields in $\Sigma'$ are not only non-isomorphic as monogenised fields, but also non-isomorphic as fields, and for which Theorem \ref{main theorem} holds. This strongly suggests that the increase in the size of the $2$-torsion in the class group and narrow class groups in Theorem \ref{main theorem} is genuinely due to monogenicity and not some hypothetical artefact of seeing fields with large class numbers many times due to inequivalent monogenisers.

From this family $\Sigma'$, we also deduce that \cite[Theorem 2 and Theorem 4]{MR3782066} hold for monogenic fields, albeit with slightly weaker bounds. 

\begin{cor}\label{main corollary}
Let $n \ge 3$ be an odd integer and $(r_1,r_2)$ a choice of signature. Let $\SS \subset \mathfrak{R}^{r_1,r_2}_{1,\max}$ be an acceptable family. Then the following lower bounds hold:
\begin{enumerate}[1)]
\item The proportion of $S_n$-fields in $\SS$ which have odd class number is at least $1-\frac{2}{2^{r_1+r_2-1}}$. 	
\item If $(r_1,r_2) \neq (1,1)$ we have $\# \left\{R \in \SS \colon \left| {\rm Disc} \left( R \right) \right| < X \, \mathrm{and} \, 2 \nmid \left| \Cl \left(R\right) \right| \right\} \gg X^{\frac{1}{2}+\frac{1}{n}}.$
\item The proportion of $S_n$-fields in $\SS$ which have odd narrow class number is at least $1-\frac{1}{2^{\frac{n-1}{2}}}-\frac{1}{2^{r_2}}$.
\item If $r_2 \neq 0$ and $(r_1,r_2) \neq (1,1)$ we have $\# \left\{R \in \SS \colon \left| {\rm Disc} \left( R \right) \right| < X \, \mathrm{and} \, 2 \nmid \left| \Cl^+ \left(R\right) \right| \right\} \gg X^{\frac{1}{2}+\frac{1}{n}}.$
\end{enumerate}
In particular, we conclude that for any $(r_1,r_2) \neq (1,1)$ there are infinitely many monogenic $S_n$-fields of degree $n$ and signature $(r_1,r_2)$ which have odd class number and, for any $(r_1,r_2) \neq (1,1)$ with $r_2 \neq 0$, infinitely many which have units of every signature. 
\end{cor}

\subsection{Fields associated to polynomials of arbitrary fixed leading coefficient} \label{subsec: intro N-monogenic}

Fix an odd degree $n$, a signature $(r_1,r_2)$ such that $r_1+2r_2 = n$, and a leading coefficient $1 \le N \in \mathbb{N}$. 

For a non-zero integral binary $n$-ic form $f \in \Sym^n(\Z^2)$ which is primitive \footnote{A form is primitive if the $\gcd$ of its coefficients is $1$.}, let $R_f$ denote the coordinate ring of the subscheme of $\P_\Z^1$ described by $f$. We say that $R_f$ is equivalent to $R_{f'}$ if $f(x,y) = f'(x+m,y)$ for some $m \in \Z$. Denote by $\mathfrak{R}_{N,\max}^{r_1,r_2}$ the family of equivalence classes of rings $R_f$ corresponding to binary $n$-ic forms $f \in \Sym_n(\Z^2)$ with $r_1$ real roots and $2r_2$ complex roots, leading coefficient $N$, and with the property that: 1) $f$ is primitive; 2) $R_f$ is the maximal order in its fraction field $K_f$. \footnote{We call such an equivalence class, and also $K_{f}$, an $N$-monogenised field.} An equivalence class in $\mathfrak{R}_{N,\max}^{r_1,r_2}$ contains a unique representative $R_{f_0}$ with the $x^{n-1}y$ coefficient of $f_0$ contained in the interval $[0,Nn)$. Writing $f_0(x,y)= Nx^n+a_1 x^{n-1}y+\ldots+a_ny^n$, we define the height of the equivalence class as $H_{\text{naive}}([R_{f_0}]) := \max_i |a_i|^{1/i}$ and we order $\mathfrak{R}_{N,\max}^{r_1,r_2}$ using $H_{\text{naive}}$. Elements of $\mathfrak{R}_{N,\max}^{r_1,r_2}$ are called \textit{isomorphism classes of $N$-monogenised fields}. 

We have a notion of an \textit{acceptable family} of $N$-monogenised fields $\SS \subset \mathfrak{R}_{N,\max}^{r_1,r_2}$ associated to an \textit{acceptable collection} of local specifications $(\SS_p)_p$, see \S \ref{sec: averages for $N$ monogenised fields}. For $p|N$, the \emph{even ramification density} of $\SS$ at $p$, $\square_p(\SS)$, is the density in $\Sigma_p$ of $f_p \in \Sigma_p$ with the property that $f_p(x,1)$ is a square mod $p$ up to multiplication by $(\Z/p)^\times$, that is $f_p(x,1) = u g_p(x,1)^2$ for some polynomial $g_p(x,1)$ and some unit $u \in (\Z/p)^\times$. 

In \S \ref{sec: averages for $N$ monogenised fields}, we use the tools developed for the proof of Theorem \ref{main theorem} to prove the following statement.
 
\begin{thm}[Main theorem for $N$-monogenised fields] \label{thm: main N monogenic theorem}
Let $n \ge 3$ be an odd integer, $(r_1,r_2)$ a choice of signature and $N = m^2k$ with $k$ squarefree.  Let $\SS \subset \mathfrak{R}_{N, \max}^{r_1,r_2}$ be an acceptable family of isomorphism classes of $N$-monogenised fields of odd degree, corresponding to an acceptable collection of local specifications $(\SS_p)_p$, with the property that local specification at $2$, $\SS_2$, is given modulo $2$ and only contains forms in $U_N^{r_2}(\Z_2)$ whose mod $2$ reduction is unramified. For $p|k$, let $\square_p(\SS)$ denote the even ramification density of forms in $\SS$. Then, the average number of $2$-torsion elements in the class group and narrow class group of fields in $\SS$ satisfy
\[{\rm Avg}(\SS,\Cl(\cdot)[2]) \le 1+ \frac{1+ \prod_{p|k}\square_p(\SS)}{2^{r_1+r_2-1}} \hspace{5ex} \text{ and } \hspace{5ex} {\rm Avg}(\SS,\Cl^+(\cdot)[2]) \le 1 + \frac{\prod_{p|k}\square_p(\SS)}{2^{\frac{n-1}{2}}}+\frac{1}{2^{r_2}}\]
with equalities conditional on a tail estimate of the form \ref{conj: conjectural tail estimate}.
\end{thm}

Theorem \ref{thm: main N monogenic theorem} is due to \cite{BhargavaHankeShankar} in degree $n=3$ and to \cite{Ashvin} for $\Cl(\cdot)[2]$ in all odd $n > 3$, while it was unknown in odd $n>3$ for $\Cl^+(\cdot)[2]$. In \cite{Ashvin}, Swaminathan first uses a new parametrisation \footnote{This new parametrisation is roughly a quadratic twist of the parametrisation of \cite{MR3187931}.} to reduce the problem of computing ${\rm Avg}(\SS,\Cl(\cdot)[2])$ for subfamilies $\Sigma \subset \mathfrak{R}_{N,\max}^{r_1,r_2}$ to the monogenic problem together with a cuspidal correction factor, before applying the techniques and results of the present paper. Our proof of Theorem \ref{thm: main N monogenic theorem} is different and gives a new perspective on the leading constants in the asymptotic used to estimate ${\rm Avg}(\SS, \Cl(\cdot)[2])$ and ${\rm Avg}(\SS,\Cl^{+}(\cdot)[2])$, distinct from that of either \cite{BhargavaHankeShankar}or \cite{Ashvin}.

\subsection{Leitfaden} \label{subsec: 
main ideas}

The main idea is to adapt the strategy of \cite{BhargavaHankeShankar} to the setting of Wood's parametrisation \cite{MR2763952, MR3187931}. We prove Theorem \ref{main theorem} by taking the ratio of two asymptotic formulas --- the first, a formula for the number of $\SL_n(\Z)$ orbits of pairs of integral symmetric $n \times n$ matrices lying above \footnote{This is understood via the resolvent map defined below.} elements of $\Sigma$ of height at most $X$, and the second, a formula for the number of elements of $\Sigma$ of height at most $X$ --- and letting $X \rightarrow \infty$.

The second formula is known: parametrising isomorphism classes of monogenised $n$-ic orders, $\mathfrak{R}_{1,{\rm order}}^{r_{1},r_{2}}$, in terms of elements of $\bigsqcup_{0 \le b < n} U_{1,b}(\Z)$, we get asymptotics for the number of elements of height at most $X$ in acceptable, and thus also in very large, subfamilies of $\mathfrak{R}_{1,{\rm order}}^{r_{1},r_{2}}$ from the work of \cite{MR4419629}. 

The first formula was previously unknown and is the focus of this paper's work. Wood's parametrisation \cite{MR3187931} allows us to interpret $2$-torsion ideal classes of rings in $\mathfrak{R}_{1,{\rm order}}^{r_1,r_2}$ in terms of $\SL_n(\mathbb{Z})$-orbits on the space $V(\Z)(\det(A) = \factoralt)$, consisting of pairs of integral symmetric $n \times n$ matrices $(A,B)$ subject to the constraint $\det(A) = \factoralt$. This space has a resolvent map to binary forms, $\pi(A,B):= \factoralt\det(Ax-By)$, allowing us to lift the naive height.

The constraint $\det(A) = \factoralt$, which comes from asking for monogenicity in Wood's parametrisation, considerably complicates direct application of Bhargava's geometry of numbers method developed in \cite{MR2183288,MR3272925,MR3275847}. This is because $V(\Z)(\det(A) = \factoralt)$ is a \emph{hypersurface} in the affine space of pairs $\{ (A,B) 
\}$ of symmetric $n \times n$ matrices, obstructing the direct use of Davenport's Lemma, its most basic tool. 

We get around this complication by ``linearising'' the problem, following an idea of \cite{BhargavaHankeShankar}. Denote by $\mathscr{L}_{\mathbb{Z}}$ the collection of $\SL_n(\mathbb{Z})$-equivalence classes of integral symmetric matrices of determinant $\factoralt$. This collection is finite. Counting $\SL_n(\mathbb{Z})$-orbits on $V(\Z)(\det(A) = \factoralt)$ is equivalent to counting, for each $A_0 \in \mathscr{L}_\mathbb{Z}$, integral $\SO_{A_0}(\mathbb{Z})$-orbits on the slice consisting of pairs $(A_0,B)$. Precisely, we count integral orbits in $\bigsqcup_{A_0 \in \mathscr{L}_{\Z},0 \le b < n} \SO_{A_0}(\Z) \backslash V_{A_0,b}^{r_2,\delta}(\R)$, where $V_{A_0,b}^{r_2,\delta}$ denotes the space of pairs of symmetric matrices of the form $(A_{0},B)$ with a resolvent polynomial in $U_{1,b}^{r_{2}}$ and where $\delta$ is a parameter indexing certain archimedean local conditions. 

Despite some technical complexities, Bhargava's geometry of numbers method is capable of handling the count corresponding to $2$-torsion ideal classes in orders. It gives asymptotic bounds for the number of absolutely irreducible orbits of height at most $X$ in subfamilies $\mathcal{V}(\Lambda_{A,b}^\delta)$ of $V_{A_0,b}^{r_2,\delta}(\Z)$ defined by congruence conditions. These bounds are equalities for very large subfamilies, and conditional equalities for acceptable subfamilies, granted the tail estimate \ref{conj: conjectural tail estimate}.

The asymptotic bounds obtained for each $A_0$-slice must be summed over all the elements $A_0 \in \mathscr{L}_\Z$. The evaluation of the leading coefficient of this sum is delicate and relies on understanding non-trivial $2$-adic and archimedean masses at each $A_0$-slice. We find their values by proving ``equidistribution'' of local mass results. In doing so, we introduce a flexible argument allowing us to prove average equidistribution of $2$-adic mass in families defined by local condition modulo $2$ by using the local representation densities at $2$ which appear in the Smith--Minkowski--Siegel mass formula --- these densities were first correctly computed by Conway--Sloane in \cite{MR965484}. The same argument works for non-maximal orders, and also in the even degree and relative cases of the sequels \cite{Siadeven} and \cite{Siadrelative}.  

The proof of Theorem \ref{thm: main N monogenic theorem} follows this pattern, but uses an argument based on the behaviour of Hasse--Witt invariants under orthogonal direct sum to establish the required, much more intricate, equidistribution of local mass results at all primes $p \mid N$. 

The paper is organised as follows. In \S \ref{sec: background and preliminaries}, we set notation and gather background that we will need for the proofs of Theorem \ref{main theorem} and Theorem \ref{thm: main N monogenic theorem}. In \S \ref{sec: parametrisations}, we parametrise the $2$-torsion ideal classes of the rings in $\mathfrak{R}_1^{n}$ in terms of integral $\SL_n(\mathbb{Z})$-orbits on the space of pairs of symmetric $n \times n$ matrices $(A,B)$ subject to $\det(A) = \factoralt$. In \S \ref{sec: reduction theory} we construct fundamental domains for the action of $\SO_{A_0}(\Z)$ on $V_{A_0,b}^{r_2,\delta}(\R)$ for $A_0 \in \mathscr{L}_\Z$ with good boundedness properties. In \S \ref{sec: averaging and cusp}, we cut off the cusp to obtain Theorem \ref{thm: rough count}: a count of absolutely irreducible integral orbits on these slices. In \S \ref{sec: sieves}, we sieve down this count to projective orbits lying above very large and, conditional on the tail estimate \ref{conj: conjectural tail estimate}, acceptable families of binary forms. In \S \ref{sec: product of local volumes} - \S \ref{sec: archimedean distribution}, we establish the equidistribution of local mass results we need. In \S \ref{sec: proof of main theorem}, we prove Theorem \ref{main theorem} and Corollary \ref{main corollary}, and in \S \ref{sec: averages for $N$ monogenised fields} Theorem \ref{thm: main N monogenic theorem}.

\subsection{Previous work}

Our method blends ideas from \cite{MR3782066} and \cite{BhargavaHankeShankar}. We, however, overcome several difficulties which were not present in their setting.

In \cite{MR3782066}, both the geometry of numbers and the evaluation of the adelic volume appearing in the asymptotic formula obtained after its application are more straightforward. In \cite{MR3782066}, one cuspidal part needs to be cut off for each degree, while we need to cut off, in each degree, a growing number of them: one for each $A_0$-slice. In \cite{MR3782066}, evaluating the adelic volume produced from the geometry of numbers reduces to understanding $R^\times / (R^\times)^2$ for $n$-ic rings $R$ over $\Z_p$ and $\R$, and is thus relatively easy to compute. In this paper, we are faced with sums of adelic volumes over genera, the evaluation of each of which requires precise equidistribution results on the relevant local masses. 

In \cite{BhargavaHankeShankar}, the parametrisation of quartic rings in \cite{MR2113024} and class field theory leads to the use of $\SL_3(\Z)$ orbits on pairs of binary quadratic forms (halves-in!). Such parametrisation is not available in higher degrees, and we have to use Wood's parametrisation and, with it, $\SL_n(\Z)$ orbits on pairs of symmetric bilinear forms (halves-out!), whose $2$-adic theory is considerably (and (in)famously!) more complicated. As a result, in addition to the difficulties arising from cutting off a growing number of cuspidal parts in higher degrees, we also have to work much harder to obtain equidistribution of local mass results at the $2$-adic and the archimedean places, even in the cubic case. 

Lastly, as mentioned in \S \ref{subsec: intro N-monogenic}, the work in this paper is critical to \cite{Ashvin}.

\subsection{Acknowledgements} 

The author is grateful to his advisor Arul Shankar for suggesting the project and for his constant encouragement. He also thanks Manjul Bhargava, Jonathan Hanke, Ashvin Swaminathan and Akshay Venkatesh for their helpful comments on earlier versions of this paper. The author would like to thank the anonymous referees for their careful reading and many helpful comments. 

This work formed part of the author's Ph.D. thesis at the University of Toronto, and he warmly thanks the members of his thesis committee --- Florian Herzig, Stephen Kudla, Jacob Tsimerman, Ila Varma, and Melanie Wood --- for a careful reading of an earlier version and for offering many valuable comments.

The author thanks Be\~nat Mencia-Uranga and Simon Lieu of the Cavendish Laboratory for their hospitality. This research was supported by a McCuaig–Throop Bursary and an OGS at the University of Toronto, a Postdoctoral Fellowship from NSERC, the NSF under Grant No. DMS-1926686 at the Institute for Advanced Study, and by Princeton University.

\section{Background and preliminaries}\label{sec: background and preliminaries}

We record a natural correspondence between monogenised rings and monic binary forms. This allows us to define acceptable and very large families of monogenised orders from \S \ref{subsec: statement of main theorem}. We then state the asymptotic formula for the number of elements of bounded height in acceptable families of monogenised rings. Lastly, we recall general facts regarding our heights and genera of symmetric bilinear forms (integer-matrix quadratic forms), which we will use in the coming sections.

\subsection{Monogenised rings in terms of monic polynomials}
For a ring $T$, let $U(T) = {\rm Sym}^{n}(T^{2})$ be the space of binary $n$-ic forms with coefficients in $T$. Let $U_{1} \subset U$ be the subspace of $U$ consisting of elements of $U$ with leading (i.e. $x^{n}$) coefficient equal to $1$. That is $U_{1}$ consists of ``monic'' binary $n$-ic forms. For an element $b \in T$, let $U_{1,b} \subset U_{1}$ denote the subspace of $U_{1}$ consisting of elements of $U_{1}$ with sub-leading (i.e. $x^{n-1}y$) coefficient equal to $b$. The group ${\rm GL}_2$ acts on $U$ via \[\gamma \cdot f(x,y) := \frac{1}{\det \gamma}f((x,y)\cdot \gamma)\] for $\gamma \in {\rm GL}_2$ and $f \in U$. The action of the subgroup $M \subset \GL_{2}$ of lower triangular unipotent matrices \footnote{That is $M(R)$ consists of matrices of the form $\begin{pmatrix} 1 & 0 \\ k & 1\end{pmatrix}$ for $k \in R$.} preserves $U_{1}$ and thus descends to an action of $M$ on $U_{1}$. 

We note that $M(\Z) \setminus U_{1}(\Z) = \bigsqcup_{0 \le b < n} U_{1,b}(\Z)$. For an integer $0 \le r_2 \le \frac{n-1}{2}$, we define $U_{1,b}^{r_2}(\R)$ to be the set of monic polynomials in $U_{1,b}(\R)$ with non-zero discriminant, $r_1 := n-2r_2$ real roots and $r_2$ pairs of conjugate complex roots. For an integer $b$, set $U_{1,b}^{r_2}(\Z) = U_{1,b}(\Z) \cap U_{1,b}^{r_2}(\R)$ and let $U_{1,b}(\Z)^{{\rm irr}}$ (resp. $U_{1,b}^{r_{2}}(\Z)^{{\rm irr}}$) be the subset of $U_{1,b}(\Z)$ (resp. $U_{1,b}^{r_{2}}(\Z)$) consisting of irreducible forms. 

The naive height $H_{\text{naive}}$ of $f = a_0x^{n}+b x^{n-1}y + a_{2}x^{n-2}y^{2}+\cdots + a_{n}y^{n} \in U(\Z)$ or $U(\R)$ is defined as: $$H_{\text{naive}}(f) := \max \left\{|b|,|a_{2}|^{1/2},\ldots,|a_{n}|^{1/n} \right\}.$$ 

We have the following parametrisation of monogenised $n$-ic rings in terms of this orbit data. 

\begin{lem} \label{monogenic rings bijection}
There is a natural bijection between isomorphism classes of monogenised $n$-ic rings and $M(\mathbb{Z})$-orbits on $U_1(\mathbb{Z})$. In particular, \[\mathfrak{R}_1^{n} \cong \bigsqcup_{0 \le b < n} U_{1,b}(\Z),\] and under this bijection $\mathfrak{R}_{1,{\rm order}}^{r_{1},r_{2}} \cong \bigsqcup_{0 \le b < n} U_{1,b}^{r_{2}}(\Z)^{{\rm irr}}$.
\end{lem}
\begin{proof} The map sending $f(x,y) \in U_1(\mathbb{Z})$ to the monogenised $n$-ic ring $\left(\mathbb{Z}[x]/(f(x,1)),x \right)$ descends to a map $\Phi$ from $M(\mathbb{Z}) \backslash U_1(\mathbb{Z})$ to isomorphism classes of monogenised $n$-ic rings. 

Indeed, if $g = \gamma \cdot f$ for $\gamma = \bigl[ \begin{smallmatrix} 1&0\\ m&1 \end{smallmatrix} \bigr] \in M(\mathbb{Z})$, then $g(x,1) = f(x+m,1)$ and the monogenised ring $\left(\mathbb{Z}[x]/(f(x,1)),x \right)$ is isomorphic to the monogenised ring $\left(\mathbb{Z}[x]/(g(x,1)), x \right)$ through $x \mapsto x+m$. 

The inverse map is given by sending a monogenised $n$-ic ring $(R,\alpha)$ to the (binary form associated to the) characteristic polynomial of $(\times \alpha)$ on $R$. 
\end{proof}

Under the identification of Lemma \ref{monogenic rings bijection}, $H_{\textrm{naive}}$ matches the naive height $H_{\text{naive}}$ on $\mathfrak{R}_{1}^{n}$ defined in \S \ref{subsec: statement of main theorem}.

\subsection{Number of elements of bounded height in acceptable and very large families}

For $S_{b} = \prod_{p} S_{b,p} \subset U_{1,b}(\widehat{\Z})$ a collection of local conditions, we denote by $U_{1,b}(S_{b})^{{\rm irr}}$ the preimage of $S_{b}$ in $U_{1,b}(\Z)^{{\rm irr}}$ under the diagonal embedding $U_{1,b}(\Z)^{{\rm irr}} \hookrightarrow \prod_{p} S_{b,p} \subset U_{1,b}(\widehat{\Z})$. For any $X >0$, we write $U_{1,b}^{r_2}(S_{b})^{{\rm irr}}(X)$ for the elements of $U_{1,b}^{r_2}(S_{b})^{{\rm irr}}$ of height at most $X$. 

We have the following asymptotic formula for the size of $U_{1,b}^{r_2}(S_{b})^{{\rm irr}}(X)$ for acceptable (and thus also for very large) families of local conditions $S_{b}$ by \cite{MR4419629}. 

\begin{thm}[{\cite[Theorem 4.1 and Theorem 5.4]{MR4419629}}] \footnote{The authors also obtain a power-saving error term which we will not need.} Fix $ 0\le b < n$ and let $S_b = \prod_{p} S_{b,p}$ be an acceptable collection of local specifications. Then we have \[\left| U_{1,b}^{r_2}(S_{b})^{{\rm irr}}(X) \right| = \Vol(U_{1,b}^{r_{2}}(\R)_{H < X}) \prod_p \Vol(S_{b,p}) + o(X^{\frac{n(n+1)}{2}-1}).\]
\end{thm}

The main term dominates because $\Vol(S_{\infty,H<X})$ grows like $X^{\frac{n(n+1)}{2}-1}$. 

\subsection{Equivalence of height and box-height on the roots} 

We show that the naive height of a monic polynomial is equivalent to the box height of its roots over the complex numbers. This equivalence will be useful in our construction of fundamental regions with certain boundedness properties in \S \ref{sec: reduction theory}. \\

Let $f(x) = x^n + a_1x^{n-1} + \cdots + a_n \in \mathbb{R}[x]$ be monic of degree $n$. We define its naive height as the naive height of its associated binary $n$-ic form,  
$H_{\text{naive}}(f) := \max_i \, \{ |a_i|^{1/i} \} = \max \{ |a_1|, |a_2|^{1/2}, \ldots, |a_n|^{1/n} \}$. Let $\alpha_1, \ldots, \alpha_n$ denote the (not necessarily distinct) roots of $f$ over the complex numbers. We define the box-height on the roots of $f$ as $$H_{\text{roots}}(f) := \max_i \{ |\alpha_i| \} = \max \{ |\alpha_1|, |\alpha_2|, \ldots, |\alpha_n| \}.$$
The naive and box height on the roots are equivalent.

\begin{prop}
Let $f(x) = x^n+a_1x^{n-1}+\ldots+a_n \in \R[x]$ be a monic polynomial of degree $n$. Then \[ H_{\text{roots}}(f) = \Theta(H_{\text{naive}}(f)).\]
\end{prop}
\begin{proof}
We need to show $C_1 H_{\text{roots}}(f) \le H_{\text{naive}}(f) \le C_2 H_{\text{roots}}(f)$ for some constants $C_1$ and $C_2$ depending only on $n$. Each coefficient $a_i$ is the $i$th elementary symmetric polynomial in the roots of $f$ and we apply the triangle inequality to find that $|a_i|^{1/i} \le {n \choose i}^{1/i} H_{\text{roots}}(f)$. This proves the upper bound. The lower bound follows from Lagrange's bound $H_{\text{roots}}(f)/2 \le H_{\text{naive}}(f)$, see \cite[\S 2.1]{MR2996895}.
\end{proof}

We will use the following lemma in the construction of the fundamental domain in \S \ref{sec: reduction theory}. 

\begin{lem} Let $c \in \C$. Then $$H_{\text{roots}}(f(x)) - |c| \le H_{\text{roots}}(f(x+c)) \le  H_{\text{roots}}(f(x)) + |c|.$$
\end{lem}
\begin{proof}
    Immediate from the triangle inequality.
\end{proof}

\subsection{Invariants of symmetric bilinear forms of odd degree} \label{subsec: background on quadratic forms}

We recall background material on symmetric bilinear forms, which we will use in the calculations of \S \ref{sec: proof of main theorem} and \S \ref{sec: averages for $N$ monogenised fields}. The material is derived from \cite{MR12640} and \cite{MR522835}.  

Throughout the paper, quadratic forms will refer to what are variously called twos-in or classically-integral quadratic forms. \footnote{A quadratic form is called classically integral if it has the form $a_{11}x_{1}^2+\cdots+a_{nn}x_n^2 + \sum_{i<j} 2 a_{ij}x_ix_j$. In other words, classically integral quadratic forms over a ring $R$ have matrices with integer entries and are thus the same as symmetric bilinear forms over $R$.} These correspond to symmetric bilinear forms (think of writing them as a matrix), and we will freely call our symmetric bilinear forms quadratic forms to make the connection to the literature on quadratic forms tighter.

\begin{defn}
For a prime $p$ or the archimedean place, any symmetric bilinear form $A \in \Sym_{2}(\Q_{p}^{n})$ can be diagonalized via an $\SL_{n}(\Q_{p})$ change of variables, say $g^{t} A g = {\rm diag}(\alpha_{1},\ldots, \alpha_{n})$. The Hasse--Witt symbol of $A$, denoted by $e_{p}(A)$, is defined as the product of the inner products $(\alpha_{i}, \alpha_{j})_{p}$ for all $i < j$, where $(\cdot,\cdot)_{p}$ is the Hilbert symbol. \footnote{By definition, $(a,b)_{p} = \pm 1$ if and only if the conic $ax^{2}+by^{2}=z^{2}$ has a non-zero solution in $\Q_{p}^{3}$.}
\end{defn}

The Hasse--Witt symbol has the following behaviour under orthogonal direct sums of forms. 

\begin{prop}[{\cite[Chapter 4 Lemma 2.3]{MR522835}}] \label{HasseWittorthsums} Let $q_1$ and $q_2$ be two $n \times n$ symmetric bilinear forms over $\Q_p$. The Hasse--Witt symbol has the following behaviour under orthogonal direct sums: \[e_p(q_1 \perp q_2) = e_p(q_1) e_p(q_2) ( \det(q_1),\det(q_2))_p.\]
\end{prop}

Two integral quadratic forms are said to be in the same genus if they are $\SL_{n}(\Z_{p})$-equivalent for all primes $p$ and $\SL_{n}(\R)$ equivalent. For an integer $N \in \Z$, we let $\mathcal{G}_{\Z,N}$ denote the set of genera of integral quadratic forms with determinant $N$. 

All forms in a genus have the same determinant. Since the number of $\SL_{n}(\Z)$ classes of integral quadratic forms of a fixed determinant is finite, $\mathcal{G}_{\Z,N}$ is also finite.

Genera are classified by the determinant and collections of local Hasse--Witt symbols whose product is equal to $1$.

\begin{thm}[{\cite[Lemmas 1, 2 and 3]{MR12640}}, {\cite[Chapter 9 \S 5]{MR522835}}] \label{genera computation}

 Fix an odd integer $n$. Let $\mathcal{G}_{\Z,N}$ denote the collection of genera of integral quadratic forms of determinant $N$. Then\[\mathcal{G}_{\Z,N} \cong \left\{ \prod_{p} [q_{p}] \times [q_{\infty}]  \in \left(\prod_{p} \frac{\Sym_2(\mathbb{Z}_p^n)}{\SL_n(\mathbb{Z}_p)} \right) \times \frac{\Sym_2(\R^n)}{\SL_n(\R)} \colon \substack{ \forall p\left(\det(q_p)= N \right) \\ \det(q_\infty) = N \\ \left(\prod_{p}e_p(q_p)\right) e_\infty(q_\infty) = 1}  \right\} .\] Specialising to $N = \factor$ and using the fact that for all odd primes $p$, there is a unique $\SL_{n}(\Z_{p})$ equivalence class of $\Z_{p}$ quadratic forms of determinant $\factor$, we get
\[\mathcal{G}_{\Z,{\factor}} \cong \left\{ \Big([q_2],[q_\infty] \Big) \in \frac{\Sym_2(\mathbb{Z}_2^n)}{\SL_n(\mathbb{Z}_2)} \times \frac{\Sym_2(\R^n)}{\SL_n(\R)} \colon \substack{ \det(q_2)= \factor \\ \det(q_\infty) = \factor \\ e_2(q_2)e_\infty(q_\infty) = 1}  \right\} .\]
\end{thm}

\section{Parametrisations} \label{sec: parametrisations}

Let $f \in U_1(\Z)$ with $\Delta(f) \neq 0$ and $R_{f}$ be the monogenised ring $(\Z[x]/(f(x,1)),x)$ associated to $f$ under the bijection of Theorem \ref{monogenic rings bijection}. 
The goal of this section is to write formulas for the sizes of $\Cl(R_{f})[2]$ and $\Cl^{+}(R_{f})[2]$ in terms of the number of $\SL_{n}(\Z)$-orbits of pairs of symmetric integral matrices with resolvent form $f$. This reduces the problem of obtaining an asymptotic formula for $\sum_{\substack{\mathcal{O} \in \SS \\ H_{\text{naive}}(\mathcal{O}) < X}} | \Cl^{(+)}(\mathcal{O})[2] | - | \I_2(\mathcal{O}) |$ to the problem of finding the number of orbits with resolvent of height bounded by $X$. 

The section is organised as follows: 1) for an order $\O$ in a number field $K$, we define the spaces $H(\O)$ and $H^+(\O)$, which are close relatives of the $2$-Selmer group of $K$. These surject onto $\Cl(\O)[2]$ and $\Cl^{+}(\O)[2]$, with fibres of known size; 2) we recall Wood's parametrisation of $2$-torsion ideal classes in rings associated to binary forms with coefficients in a PID (we will need it again in \S \ref{sec: averages for $N$ monogenised fields}, so we consider general binary forms not just monic ones). Specializing to $\Z$ gives $H(R_{f})$ and $H^{+}(R_{f})$ in terms of $\SL_{n}(\Z)$ orbits of pairs of integral symmetric matrices with resolvent $f$; 3) Wood's parametrisation takes a simple form over $\Z_p$ and $\R$, and we record the orbit and stabilizer data for later use.

The notation and material of this section are adapted from \cite[\S 2]{MR3782066}. 

\subsection{The spaces $H(\O)$ and $H^{+}(\O)$}  Let $\mathcal{O}$ be an order in an $S_n$ number field of degree $n$. Let $H(\mathcal{O})$ denote the set of pairs $(I,\delta)$ consisting of a fractional ideal $I$ of $\O$ and an element $\delta \in K^\times$ such that $I^2 \subset (\delta)$, $N(I)^2 = N(\delta)$ and such that the ideal $I$ is projective (i.e. invertible as a fractional ideal or, equivalently, satisfies $I^2=(\delta)$). 

The set $H(\mathcal{O})$ has a natural composition law defined by component-wise multiplication. There is a map from $H(\mathcal{O})$ to the $2$-torsion of the class group of the order $\mathcal{O}$ given by forgetting about the $\delta$ component. This map is surjective since the norm map is multiplicative on invertible ideals, and the fibres only depend on the rank of the unit group of $\O$. 

It is also possible to relate the space $H(\mathcal{O})$ to $2$-torsion in the narrow class group of $\mathcal{O}$. Denote by $H^{+}(\O)$ the subgroup of $H(\O)$ consisting of pairs $(I,\delta)$ with $\delta$ having the property that its image under any real embedding of $K$ is positive. 

We have the following relation between the sizes of $H(\mathcal{O})$ and $H(\mathcal{O}^+)$ and those of $\Cl(\mathcal{O})[2]$ and $\Cl^+(\mathcal{O})[2]$.

\begin{lem}[{\cite[Lemma 2.3 and Lemma 2.4]{MR3782066}}]
Let $\mathcal{O}$ be an order in a number field of degree $n$ and signature $(r_1,r_2)$. Then: 
$$
    \lvert H(\mathcal{O}) \rvert = 2^{r_1+r_2-1}\lvert \Cl(\mathcal{O})[2]|,
$$
$$
    |H^+(\mathcal{O})| = 2^{r_2} |\Cl^+(\mathcal{O})[2]|.
$$

\end{lem}

\subsection{Parametrisation in terms of orbits of pairs of symmetric matrices} 

Fix a ring $T$. Let $V(T) = T^2 \otimes \Sym_2 (T^n)$ be the space of pairs of symmetric $n \times n$ matrices with coefficients in $T$. The group $\SL_n(T)$ acts on $V(T)$ by change of basis $\gamma \cdot (A,B) = (\gamma^t A \gamma, \gamma^t B \gamma)$ for where $\gamma^t$ is the transpose of $\gamma$. There is a natural $\SL_n(T)$ invariant resolvent map from $\pi \colon V(T) \rightarrow U(T)$ defined by \[ \pi(A,B) := (-1)^{\frac{n-1}{2}}\det(Ax-By).\] We write $\Delta(A,B) := \Delta(\pi(A,B))$ for the discriminant of the resolvent form of $(A,B)$. We say that a pair $(A,B) \in V(T)$ is non-degenerate if $\Delta(A,B) \neq 0$. When $T = \Z$ or $\R$, the height of a pair $(A,B)$ is defined to be the height of its resolvent binary form, $H((A,B)) := H(\pi(A,B))$. 

Let $T$ be a PID, and $K$ its field of fractions. Let $f$  be a binary $n$-ic form $f = f_{0}x^{n} + \cdots + f_{n}y^{n} \in U(T)$ with $f_0 \neq 0$ and $\Delta(f) \neq 0$. Define $K_{f} := K[\theta]/f(\theta,1)$. Define the free $T$-submodule $R_{f} = {\rm Span}_{T} \{1, \zeta_{1}, \ldots, \zeta_{n-1} \}$ where $\zeta_{i} := f_{0}\theta^{i}+ \cdots + f_{i-1} \theta \subset K_{f}$. Then $R_{f}$ is multiplicatively closed and thus forms a ring. In addition, $\disc(R_f) = \Delta(f)$. When $T = \Z$, $R_{f}$ is an order in $K_{f}$. We also have the fractional ideals of $R_{f}$ defined by $I_{f}(k) := {\rm Span}_{T} \{1, \theta, \ldots, \theta^{k}, \zeta_{k+1}, \ldots, \zeta_{n-1}\} = I_{f}(1)^{k}$. These are invertible if $f$ is primitive. If $I$ is an ideal given with a basis, its norm is the determinant of the linear map taking the basis of $I$ to the basis $\{1, \zeta_{1}, \ldots, \zeta_{n-1} \}$ of $R_{f}$. If $I$ is not based, the norm is understood up to multiplication by an element of $T^{\times}$. We have $N(I_{f}(k)) = 1/f_{0}^{k}$. 

We now present the parametrisation of $2$-torsion ideal classes in rings associated with binary forms, due to Bhargava \cite{MR2081442} in the case $n=3$ and to Wood \cite{MR2763952, MR3187931} for general $n \ge 3$.

\begin{thm}[{\cite[Theorem 6.3]{MR3187931}}] \label{Wood parametrisation}
Let $n$ be odd and $T$ a PID. Take a non-degenerate primitive binary $n$-ic form $f \in U(T)$. We have a bijection between $\SL_n(T)$-orbits of pairs $(A,B) \in V(T)$ with $\pi(A,B) = f$ and equivalence classes of pairs $(I,\delta)$ where $I \subset K_f$ is a fractional ideal of $R_f$ and $\delta \in K_f^\times$ such that $I^2 \subset (\delta) I_{f}(n-3)$ as ideals and $f_{0}^{n-3}N(I)^2 = N(\delta)$. The classes $(I,\delta)$ and $(I',\delta')$ are equivalent if there exists a $\kappa \in K_f^\times$ with the property that $I = \kappa I'$ and $\delta = \kappa^2 \delta$. Furthermore, the forms $A$ and $B$ have a simple description: They are, respectively, the bilinear forms $\check{\zeta}_{n-1}(xy/\delta)$ and $\check{\zeta}_{n-2}(xy/\delta)$ on $I_{f}(n-3)$ expressed in a common basis for $I$, where we write $ \{ \check{\zeta}_{0}, \check{\zeta}_{1}, \ldots, \check{\zeta}_{n-1} \}$ for the dual basis of $R_{f}$. 
\end{thm}

We can describe the stabilisers in terms of $R_f$. 

\begin{lem}[{\cite[Corollary 2.15]{MR3782066}}]
The stabiliser of $(A,B) \in \pi^{-1}(f)$  in the group $\SL_n(T)$, for $f \in U(T)$ a non-degenerate primitive binary $n$-ic form, corresponds to the norm $1$ elements of the $2$-torsion in the units of $R_f$: $R_f^\times[2]_{N \equiv 1}$.
\end{lem} 

We specialise to the case where $T = \Z$. An orbit $\SL_{n}(\Z) \cdot (A,B)$ is said to be \emph{projective} if $\pi(A,B)$ is primitive and the corresponding pair $(I,\delta)$ has $I$ projective as an $R_{f}$-module. Equivalently, we may simply ask that $I$ be invertible or that $I^{2} = (\delta) I_{f}(n-3)$. 

For an order $\O$ in a number field, we denote by $\I_2(\O)$ the $2$-torsion subgroup of the ideal group $\I(\O)$ of $\mathcal{O}$ --- that is the group of invertible fractional ideals of $\mathcal{O}$ with the property that $I^2 = \mathcal{O}$. If $\mathcal{O}$ is a maximal order, the only element in $\mathcal{I}_2(\mathcal{O})$ is the trivial ideal $\mathcal{O}$. 

Elements of $\I_2(\O)$ are related to reducible pairs in $V(\Z)$ under the parametrisation of Theorem \ref{Wood parametrisation}.

\begin{defn}(\cite[\S 2.4]{MR3782066}) \label{defn: reducible}
A pair $(A,B) \in V(\mathbb{Q})$ is said to be reducible if the quadrics corresponding to $A$ and $B$ have a common rational isotropic subspace of (affine) dimension $(n-1)/2$. \footnote{For instance, if $n=3$, pairs $(A,B)$ with $a_{11} = b_{11} = 0$ are reducible.}
\end{defn} 

Reducible pairs correspond to $(I,\delta)$ where $\delta$ is a square.

\begin{lem}[{\cite[Theorem 2.6]{MR3782066}}] \label{lem: reducible}
Let $(A, B)$ be a projective element $V(\mathbb{Z})$ with an irreducible and non-degenerate monic resolvent form. Let $(I,\delta)$ be the corresponding pair. Then $\delta$ is a square in $(R_f \otimes_{\mathbb{Z}} \mathbb{Q})^\times$ if and only if $(A,B)$ is reducible.
\end{lem}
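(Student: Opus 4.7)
The plan is to prove both directions of the equivalence using the explicit form of Wood's bijection, which attaches to a pair $(A,B)\in V(\mathbb{Z})$ with resolvent $f$ an equivalence class $(I,\delta)$ with $I^{2}\subseteq\delta R_{f}^{n-3}$, $N(I)^{2}=N(\delta)N(R_{f}^{n-3})$, modulo $(I,\delta)\sim(\kappa I,\kappa^{2}\delta)$ for $\kappa\in K_{f}^{\times}$. Note first that $\delta\in(K_f^\times)^2$ is automatically an invariant of the equivalence class, since $\kappa^{2}\delta$ is a square iff $\delta$ is; this is consistent with the statement being independent of the choice of representative. The hypotheses that $f$ is primitive, irreducible, and non-degenerate guarantee that $K_f$ is a number field, so ``square'' is unambiguous, and that $I$ is a genuine invertible fractional ideal.

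For the direction $\delta\in(K_f^\times)^2\Rightarrow$ reducible: write $\delta=\mu^{2}$ and rescale by $\kappa=\mu^{-1}$ to reduce to a representative $(J,1)$ with $J=\mu^{-1}I$ satisfying $J^{2}\subseteq R_{f}^{n-3}$ and $N(J)^{2}=N(R_{f}^{n-3})$. Under Wood's construction, the pair $(A_0,B_0)$ representing $(J,1)$ is built from the symmetric $R_f$-bilinear pairings $J\otimes_{\mathbb{Z}}J\to R_{f}^{n-3}$ given by multiplication by $1$ and by $\theta$, composed with the coefficient-extraction functionals from $R_f^{n-3}\subset K_f$. Choose a $\mathbb{Q}$-basis of $J\otimes\mathbb{Q}=K_f$ adapted to the natural filtration by powers of $\theta$ modulo $R_f^{n-3}$. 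In this basis both pairings visibly have vanishing upper-left $(n-1)/2\times(n-1)/2$ block, because the products of elements from the ``bottom half'' land in the submodule of $R_f^{n-3}$ on which the chosen functionals vanish. The span of the corresponding basis vectors is a common rational isotropic subspace of dimension $(n-1)/2$.

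For the converse: let $W\subset\mathbb{Q}^{n}$ be a common isotropic subspace of dimension $(n-1)/2$, and pick $\gamma\in\SL_n(\mathbb{Q})$ sending $W$ to $\mathbb{Q}^{(n-1)/2}\times 0$. Then $A'=\gamma^{t}A\gamma$ and $B'=\gamma^{t}B\gamma$ both have vanishing upper-left $(n-1)/2\times(n-1)/2$ block. Reversing the recipe above, this block form identifies a rank-$(n-1)/2$ $R_f$-submodule of $I\otimes\mathbb{Q}=K_f$ on which the Wood pairings vanish. Because the pairings are $R_f$-linear and non-degenerate (as $f$ is non-degenerate), this submodule must be its own annihilator; equivalently, there is a fractional ideal $J\subset K_f$ with $J\otimes\mathbb{Q}\supset W$ and $J\cdot J\subset R_{f}^{n-3}$ after the rescaling, so that $(A,B)$ corresponds under the equivalence to the pair $(J,1)$. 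Pushing the relation $(I,\delta)\sim(\mu J,\mu^{2})$ back to the original representative yields $\delta\in(K_f^\times)^{2}$.

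The main obstacle will be Step 2: extracting a clean ideal-theoretic consequence from the purely linear-algebraic condition on $(A,B)$, and in particular matching the rank-$(n-1)/2$ isotropic subspace of $\mathbb{Q}^{n}$ with an $R_f$-submodule of $K_f$ having the sharp self-pairing property. The delicate point is the norm-index accounting: one must verify via $N(I')^{2}=N(\delta')N(R_{f}^{n-3})$ that $\delta'$ is a square in $K_f^\times$ on the nose and not merely up to a norm-one unit, which is where invertibility of $I$ and the norm conditions are both essential.
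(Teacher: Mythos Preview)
The paper does not supply its own proof of this lemma; it is quoted verbatim as a result of Ho--Shankar--Varma \cite{HoShankarVarmaOdd} and used as a black box. There is therefore nothing in the present paper to compare your argument against directly.

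That said, a few comments on the proposal itself. Your forward direction ($\delta$ a square $\Rightarrow$ reducible) is correct in outline: after rescaling to $(J,1)$, the pair over $\mathbb{Q}$ is the distinguished one, and in the basis $1,\theta,\dots,\theta^{n-1}$ of $K_f$ the two bilinear forms have the required vanishing $(n-1)/2\times(n-1)/2$ block because products of low powers of $\theta$ are again low powers.

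Your converse has a genuine confusion. You write that the isotropic subspace $W$ ``identifies a rank-$(n-1)/2$ $R_f$-submodule of $I\otimes\mathbb{Q}=K_f$.'' But $K_f$ is a field, so any nonzero $R_f$-submodule is all of $K_f$; there is no such thing as an $R_f$-submodule of rank $(n-1)/2$. What $W$ gives you is a $\mathbb{Q}$-subspace of dimension $(n-1)/2$, and the pairings are $\mathbb{Q}$-bilinear (they arise from an $R_f$-bilinear map followed by $\mathbb{Z}$-linear functionals), not $R_f$-bilinear in the sense you invoke. Consequently the step ``this submodule must be its own annihilator; equivalently \dots $(A,B)$ corresponds to $(J,1)$'' is not justified, and the final norm-index paragraph does not repair it.

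The cleaner route for the converse, and the one implicit in the parametrisation lemmas the paper quotes just above, is to work entirely over $\mathbb{Q}$: reducibility is a $\mathbb{Q}$-condition, and over a field the $\SL_n$-orbits with resolvent $f$ are in bijection with $(K_f^\times/(K_f^\times)^2)_{N\equiv 1}$, the class of $(I,\delta)$ mapping to the class of $\delta$. One then shows that any two reducible pairs with the same resolvent are $\SL_n(\mathbb{Q})$-equivalent (normalize the off-diagonal block of $A$ using the residual parabolic action, after which $B$ is forced by the coefficients of $f$), so there is a \emph{unique} reducible orbit; by the forward direction it is the one with $\delta\equiv 1$, hence $\delta$ is a square.
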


This gives a parametrisation of $H(R_{f})$ and $H^{+}(R_{f})$ in terms of $\SL_n(\Z)$ orbits on $V(\Z)$.

\begin{lem}[{\cite[Proposition 2.5 and 2.12]{MR3782066}}] \label{integral parametrisation}
Take a non-degenerate irreducible primitive binary $n$-ic form $f \in U(\Z)$. There is a natural bijection between $H(R_{f})$ and projective $\SL_{n}(\Z)$ orbits on $V(\Z) \cap \pi^{-1}(f)$. There is a natural bijection between $H^{+}(R_{f})$ and projective $\SL_{n}(\Z)$ orbits on $V(\Z) \cap \pi^{-1}(f)$ with the property that in the associated pair $(I,\delta)$ we have that the image of $\delta$ under any real embedding of $K_{f}$ is positive. Furthermore, there is a natural bijection between $\I_2(\mathcal{O})$ and the set of projective reducible $\SL_n(\mathbb{Z})$-orbits on $V(\mathbb{Z}) \cap \pi^{-1}(f)$. 
\end{lem}

\subsection{Orbits over fields and $\mathbb{Z}_p$} Over a field or $\mathbb{Z}_p$ the parametrisation in Theorem \ref{Wood parametrisation} takes a particularly simple form. 

\begin{lem}[{\cite[Corollary 2.15]{MR3782066}}] \label{arithmetic rings}
Let $f$ be a separable non-degenerate primitive binary $n$-ic form with coefficients in $T$, for $T$ a field or $\mathbb{Z}_p$. The projective $\SL_n(T)$-orbits of $V(T)$ with resolvent $f$ are in bijection with the elements of $R_f^\times/(R_f^\times)^2$ of norm equal $1 \in T^\times/(T^\times)^2$, which we write $(R_f^\times/(R_f^\times)^2)_{N \equiv 1}$.
\end{lem}

We record some information about the real orbits in the following example.

\begin{exmp} \cite[Example 2.17]{MR3782066} \label{archimedean example}
For each $f \in U^{r_2}(\R)$, we have $R_{f} = K_{f} \cong \R[x]/(f(x)) \cong \R^{r_1} \times \C^{r_2}$. We have that $(R_f^\times/(R_f^\times)^2)_{N \equiv 1}$, and thus the set of $\SL_{n}(\R)$ orbits with resolvent $f$, is in natural bijection with $\mathcal{T}(r_{2})$, the set of tuples in $\{\pm 1\}^{r_1} \times \{1 \}^{r_2}$ with an even number of $-1$ entries. The set $\mathcal{T}(r_2)$ has cardinality $2^{r_{1}-1}$. The stabilisers correspond to $((\R^{\times})^{r_1} \times (\C^{\times})^{r_2})[2]_{N \equiv 1}$ and thus all have size $\sigma(r_{2}) := 2^{r_{1}+r_{2}-1}$. 
\end{exmp}

\subsection{Orbits and self-adjoint operators}   

We will find the following characterisation of $\SO_A$-orbits on $V_A \bigcap \pi^{-1}(f)$ for monic $f$ useful in \S \ref{sec: archimedean distribution}.

\begin{lem}\label{self-adjoint operators}
Let $A \in \mathscr{L}_{\mathbb{Z}}$. Let $\W_A$ be the bilinear space of rank $n$ with associated Gram matrix $A$ \footnote{$\W_{A}$ is defined as the space $\R^{n}$ with the bilinear form given in standard coordinates by the matrix $A$.}. Then, given $f \in U_1$, there is a natural correspondence between $\SO_A$-orbits on $V_A \bigcap \pi^{-1}(f)$ and $\SO_{\W_A}$-conjugacy classes of self-adjoint operators on $\W_A$ with characteristic polynomial $f$ .
\end{lem}
\begin{proof}
We note that for each $A \in \mathscr{L}_{\mathbb{Z}}$ we can represent the $\SO_A$ orbits on $V_A \cap \pi^{-1}(f)$, in terms of the orbit data $\SO_{\W_A}$ orbits on $\left\{M \in \Mat_n \, \vline \substack{AM = M^t A \\ f = \det(Ix-M)}\right\}$ where $\SO_{\W_A}$ acts on the set of $M$ via $\gamma \cdot M = \gamma M \gamma^{-1}$. The latter space consists of self-adjoint operators with characteristic polynomial $f$. Indeed, the maps $(A, B) \mapsto A^{-1}B$ and $M \mapsto (A, AM)$ are natural and give isomorphisms between these sets of orbits. 
\end{proof}

\subsection{The subspaces $V_{A,b}^{r_2}(\R)$ and $V_{A,b}^{r_2,\delta}(\R)$} \label{subsec: decorations on V}

Fix $A$ a symmetric bilinear form in $\mathscr{L}_{\mathbb{Z}}$ and $0 \le b < n$. Let $V_A \subset V$ denote the subspace of pairs $(A, B)$, where $B$ is arbitrary. Since the first component of the pairs in $V_{A}$ is fixed to be $A$, we will talk about elements $B \in V_{A}$. The resolvent map takes $V_A$ to $U_1$. Let $V_{A,b}$ denote the inverse image under the resolvent map of $U_{1,b}$. Plainly, $V_{A,b}$ is an affine subspace of $V_A$ of dimension $n(n+1)/2-1$. The algebraic group $\SO_{A}$ acts on $V_{A,b}$ and has the same stabilisers as $\SL_n$ acting on $V$. 

For $0 \le r_2 \le (n-1)/2$, write $V_{A,b}^{r_2}(\R)$ for the elements of $V_{A,b}(\R)$ whose resolvent lie in $U_{1,b}^{r_2}(\R)$. In particular, elements $v \in V_{A,b}^{r_2}(\R)$ have resolvents with no repeated roots because, by definition, elements of $U_{1,b}^{r_2}(\R)$ have non-zero discriminant.

Let $n = r_1 + 2r_2$. Denote by $\mathcal{T}(r_2)$ the set of tuples in $\{\pm 1\}^{r_1} \times \{1 \}^{r_2}$ with an even number of $-1$ entries. For $f \in U_{1,b}^{r_{2}}(\R)$, there is a canonical identification of $( R_{f}^{\times}/ (R_{f}^{\times})^{2})_{N \equiv 1}$ with $\mathcal{T}(r_2)$ by ordering the real roots of $f$ in strictly increasing order by Example \ref{archimedean example}. Since for each $f \in U_{1,b}^{r_{2}}(\R)$, the set $( R_{f}^{\times}/ (R_{f}^{\times})^{2})_{N \equiv 1}$ is identified with $\pi^{-1}(f)$ via \ref{arithmetic rings}, we get a map $V_{A,b}^{r_{2}} \rightarrow U_{1,b}^{r_{2}} \times \mathcal{T}(r_2)$. For $\delta \in \mathcal{T}(r_2)$, we define $V_{A,b}^{r_2,\delta}(\R)$ as the inverse image of $U_{1,b}^{r_{2}} \times \{ \delta \}$ under this map.

\subsection{Absolutely irreducible elements in $V(\Z)$ and definition of $N(L;X)$}
\label{subsec: absolutely irreducible elements and N(L;X)}

An element $v \in V(\mathbb{Z})$ is said to be absolutely irreducible if: 1) the resolvent of $v$ corresponds to an order in a number field; and, 2) $v$ is not reducible in the sense of Lemma \ref{lem: reducible} (thus does not correspond to a $2$-torsion element in the \emph{ideal group}, $\mathcal{I}_2(\O)$, under the identification of Lemma \ref{integral parametrisation}). \footnote{We could also ask for that order to be in an $S_n$-field, and everything in the rest of the paper still holds.}An element that is not absolutely irreducible is said to be \textit{bad}. Note that being ``bad'' is a property of the $\Q$-orbit of $v$.

Here is a sufficient criterion to determine if an element is ``bad''. 

\begin{thm}[{Criterion for badness, \cite[Theorem 2.6]{MR3782066}}]\label{thm: criterion for badness}
Let $(A, B) \in V(\mathbb{\Q})$ with $A = (a_{ij})$ and $B = (b_{ij})$ be such that all the variables in one of the following two sets vanish.
\begin{enumerate}
\item {\bf \emph{The squares:}} $\left\{a_{ij},b_{ij} | 1 \le i,j \le \frac{n-1}{2} \right\}$. 
\item {\bf \emph{The rectangles:}} $\left\{a_{ij},b_{ij} | 1 \le i \le k,  1 \le j \le n-k\right\}$ for some $1 \le k \le n-1$.
\end{enumerate}
Then $(A, B)$ is ``bad''. 
\end{thm}

Vanishing of the variables in the two different subsets in Theorem \ref{thm: criterion for badness} implies that $(A, B)$ is ``bad'' for different reasons. Vanishing of the variables in set 1) of Theorem \ref{thm: criterion for badness} implies that $(A, B)$ maps to a $2$-torsion element in the ideal group under the parametrisation of Theorem \ref{Wood parametrisation}, while vanishing of the variables in set 2) implies that $(A, B)$ has a resolvent with discriminant $0$.  

For any $L \subset V(\mathbb{Q})$, let $L^{{\rm irr}}$ denote the set of absolutely irreducible elements. 

\begin{defn}
Let $L \subset V_{A,b}^{r_2,\delta}(\mathbb{Z}) := V_{A,b}^{r_2,\delta} (\mathbb{\mathbb{R}}) \cap V_{A,b}(\mathbb{Z})$ be an $\SO_A(\mathbb{Z})$-invariant set. We define $N(L; X)$ to be the number of absolutely irreducible $\SO_A(\mathbb{Z})$-orbits on $L$ that have height bounded by $X$. 
\end{defn}

In particular, $N(L; X)$ counts a subset which is closely related to the non-trivial elements $H(\O)$ for monogenised orders $\O$ of height at most $X$: it corresponds to pairs $(I,\delta)$ where $I$ is not necessarily invertible and $\delta$ is not a square. In the same sense, subsets of $H^+(\O)$ are close to the being counted by $N(L; X)$ for $L \subset V_{A,b}^{r_2,\delta_{\gg0}}$ for $\delta_{\gg 0} := (1 1 \cdots 1 1)$. The asymptotic formulas for $N(L;X)$ obtained in the next sections can be sieved down to only count projective orbits, and thus elements in the $H(\O)$ and $H^+(\O)$ in very large families. We do so in \S \ref{sec: sieves}.

\section{Reduction theory} \label{sec: reduction theory}

Let $A \in \mathscr{L}_{\Z}$, $0 \le b < n$, and $0 \le r_2 < (n-1)/2$. In this section, we construct a fundamental domain for the action of $\SO_{A}(\Z)$ on $V_{A,b}^{r_2,\delta}(\R)$. We build it by taking the product of a fundamental domain for the action of $\SO_{A}(\Z)$ on $\SO_{A}(\R)$, which we call $\FF_{A}$ together with a fundamental domain for the action of $\SO_{A}(\R)$ on $V_{A,b}^{r_2,\delta}(\R)$ which we call $R_{A,b}^{r_{2},\delta}$. Lastly, for an $\SO_{A}(\Z)$-invariant set $L \subset V_{A,b}^{r_{2},\delta}$, we write $N(L; X)$ in terms of $\FF_{A} \cdot R_{A,b}^{r_{2},\delta}$.

\subsection{Fundamental domain for the action of $\SO_{A}(\R)$ on $V_{A,b}^{r_2,\delta}(\R)$}

We build a finite cover of the fundamental domain for the action of $\SO_A(\mathbb{Z})$ on $V_{A,b}^{r_2,\delta}(\mathbb{R})$ with a good boundedness property. Recall that the height of an element $(A, B) \in V_{A,b}^{r_2,\delta}(\R)$ is defined as the naive height of its resolvent form. For any subset $R \subset  V_{A,b}^{r_2,\delta}(\R)$, we write $R(X)$ for the subset of elements of $R$ of height at most $X$. 

\begin{prop} \label{prop: good fundamental domain}
There exists a (possibly empty) fundamental set $R_{A,b}^{r_2,\delta}$ for the action of $\SO_A(\mathbb{R})$ on $V_{A,b}^{r_2,\delta}(\mathbb{R})$ with the following properties:
\begin{enumerate}[(I)]
\item the set $R_{A,b}^{r_2,\delta}$ is semi-algebraic;
\item if $R_{A,b}^{r_2,\delta}(X)$ denotes the set of elements of height at most $X$, then the coefficients of elements $B \in R_{A,b}^{r_2,\delta}(X)$ are bounded by $O(X)$ where the implied constant does not depend on $B$.
\end{enumerate} 
\end{prop}
\begin{proof}
Many constructions are possible. 

A short one proceeds as follows. Remark that the output of the proposition is analogous to the output of \cite[\S 9.1]{MR3156850} save that there $A$ is taken to be totally split over $\Z$ and $b=0$. 

To remove the condition on $A$, use Lemma \ref{self-adjoint operators} to interpret $\SO_A(\mathbb{R})$-orbits in $V_A(\mathbb{R}) \bigcap \pi^{-1}(f)$ in terms of $\SO_{W_A}$-orbits on self-adjoint operators with characteristic polynomial $f$ on the bilinear space $W_A$ of rank $n$ with Gram matrix $A$. 
The construction of \textit{loc. cit.} with the obvious modifications then proves the existence of an $R_{A,0}^{r_2,\delta}$ satisfying properties (\textit{I}) and (\textit{II}). To deal with $b \neq 0$, note that the ``translation map'' $B \mapsto B - \frac{b}{n} A$ is an affine map from $V_{A,0}^{r_2,\delta}$ to $V_{A,b}^{r_2,\delta}$ which descends under $\pi$ to the map $f(x) \mapsto f(x+\frac{b}{n})$ from $U_{1,0}^{r_2} \rightarrow U_{1,b}^{r_2}$. To complete the proof, define $R_{A,b}^{r_2,\delta} := R_{A,0}^{r_2,\delta} - \frac{b}{n}A$, which is easily seen to be a semi-algebraic fundamental set for the action of $\SO_A(\mathbb{R})$ on $V_{A,b}^{r_2,\delta}(\mathbb{R})$, which is non-empty if and only if $R_{A,0}^{r_2,\delta}$ is. To see that property (\textit{II}) holds for $R_{A,b}^{r_2,\delta}$, we first note that our height has the following property $[H_{\text{naive}}(f(x+c)) \le X \Rightarrow H_{\text{naive}}(f) = O(X)]$ where the implied constant depends only on $c$ and not on $f$ (this follows from $H_{\text{roots}}(f)/2 \le H(f) \le cH_{\text{roots}}(f)$ and $H_{\text{roots}}(f(x+c)) \le H_{\text{roots}}(f(x))+c$). This implies that under the inverse of our ``translation'' map, $R_{A,b}^{r_2,\delta}(X)$ is mapped into $R_{A,0}^{r_2,\delta}(O(X))$ with the implied constant depending only on $b$. The coefficients of elements of $R_{A,0}^{r_2,\delta}(O(X))$ are bounded by $O(X)$. Translating back, we then see that the same is true for coefficients of elements of $R_{A,b}^{r_2,\delta}(O(X))$, completing the proof.
\end{proof}

\begin{rem} \label{homogeneously expanding}
By the proof, we also see that if $R_{A,b}^{r_2,\delta}$ is non-empty, it contains the translate of a region of $V_{A,0}$ whose coefficients are growing homogeneously in $X$. This point will be important in \S \ref{sec: averaging and cusp}.  
\end{rem}

It will be convenient to record whether $V_{A,b}^{r_2,\delta}(\mathbb{R})$ is empty or not with an indicator symbol.  \footnote{Equivalently whether $R_{A,b}^{r_1,\delta}$ is empty or not.}  

\begin{defn}
We define the indicator function
\[\chi_{A,b}^{r_2} (\delta):=
\begin{cases}
1 &\text{if } V_{A,b}^{r_2,\delta}(\mathbb{R}) \,\, (\text{or equivalently } R_{A,b}^{r_1,\delta}) \neq \emptyset\\
0 &\mbox{otherwise }
\end{cases}.\] 
\end{defn}

Note that $\chi_{A,b}^{r_2}(\cdot)$ is independent of $b$, so we drop it from the notation. Additionally, as $r_2$ will be clear from context, we will write $\chi_A(\cdot)$ from now on. We will use these indicators in \S \ref{sec: product of local volumes} to define the archimedean mass and organise the computations of \S \ref{sec: proof of main theorem}.

\subsection{Fundamental domain for the action of $\SO_A(\Z)$ on $V_{A,b}^{r_2,\delta}(\R)$}

We build a cover of a fundamental domain for the action of $\SO_A(\mathbb{Z})$ on $V_{A,b}^{r_2,\delta}(\mathbb{R})$. 

\begin{prop}
Let $\mathcal{F}_A$ be a fundamental domain for the action of $\SO_A(\mathbb{Z})$ on $\SO_A(\mathbb{R})$ and $\sigma(r_2) = 2^{r_1+r_2-1}$. Then
\begin{enumerate}
\item If $\chi_A(\delta) = 1$, $\mathcal{F}_A \cdot R_{A,b}^{r_2,\delta}$ is an $\sigma(r_2)$-fold cover of a fundamental domain for the action of $\SO_A(\mathbb{Z})$ on $V_{A,b}^{r_2,\delta}(\mathbb{R})$, where we regard $\mathcal{F}_A \cdot R_{A,b}^{r_2,\delta}$ as a multiset.
\item If $\chi_A(\delta) = 0$, then $\emptyset$ is a fundamental domain.
\end{enumerate} 
\end{prop}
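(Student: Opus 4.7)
The plan is to handle the two cases separately. The case $\chi_A(\delta)=0$ is immediate from the definitions: $V_A^{r_2,\delta}(\mathbb{R})$ is empty, so the empty set is vacuously a fundamental domain and there is nothing to check. So the content is in the case $\chi_A(\delta)=1$, where I need to show that the multiset $\mathcal{F}_A\cdot R_A^{r_2,\delta}$ meets every $\SO_A(\mathbb{Z})$-orbit in $V_A^{r_2,\delta}(\mathbb{R})$ and does so with constant multiplicity $\sigma(r_2)=2^{r_1+r_2-1}$.

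For surjectivity modulo $\SO_A(\mathbb{Z})$, I would argue as follows. Pick any $v\in V_A^{r_2,\delta}(\mathbb{R})$. Since $R_A^{r_2,\delta}$ is a fundamental set for the action of $\SO_A(\mathbb{R})$ on $V_A^{r_2,\delta}(\mathbb{R})$ (by its construction, adapted from \cite{BhargavaPointless}), there exist $g\in\SO_A(\mathbb{R})$ and $r\in R_A^{r_2,\delta}$ with $v=g\cdot r$. Using the fact that $\mathcal{F}_A$ is a fundamental domain for $\SO_A(\mathbb{Z})\backslash\SO_A(\mathbb{R})$, write $g=\gamma h$ with $\gamma\in\SO_A(\mathbb{Z})$ and $h\in\mathcal{F}_A$. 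Then $\gamma^{-1}v=h\cdot r\in\mathcal{F}_A\cdot R_A^{r_2,\delta}$, so the $\SO_A(\mathbb{Z})$-orbit of $v$ is hit.

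For the multiplicity, I would fix a generic $v\in V_A^{r_2,\delta}(\mathbb{R})$ (i.e. one whose stabiliser in $\SO_A(\mathbb{Z})$ is trivial, which is the case outside a measure-zero locus) and count the preimage in $\mathcal{F}_A\times R_A^{r_2,\delta}$ of the $\SO_A(\mathbb{Z})$-orbit of $v$. If $R_A^{r_2,\delta}$ contains a unique representative $r$ of the $\SO_A(\mathbb{R})$-orbit of $v$, and if $g_0\in\SO_A(\mathbb{R})$ satisfies $g_0\cdot r=v$, then a pair $(h,r)\in\mathcal{F}_A\times R_A^{r_2,\delta}$ satisfies $h\cdot r\in\SO_A(\mathbb{Z})\cdot v$ if and only if $h\in\SO_A(\mathbb{Z})\cdot g_0\cdot\Stab_{\SO_A(\mathbb{R})}(r)$. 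Since $\mathcal{F}_A$ picks one coset representative in each class of $\SO_A(\mathbb{Z})\backslash\SO_A(\mathbb{R})$, the number of admissible $h$ equals $\lvert\Stab_{\SO_A(\mathbb{R})}(v)\rvert/\lvert\Stab_{\SO_A(\mathbb{Z})}(v)\rvert$, which for generic $v$ is just $\lvert\Stab_{\SO_A(\mathbb{R})}(v)\rvert$.

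The final input is the stabiliser computation: by the stabiliser lemma of Ho--Shankar--Varma recorded above, $\Stab_{\SO_A(\mathbb{R})}(v)$ is identified with $(R_f\otimes\mathbb{R})^\times[2]_{N\equiv 1}$. Using $R_f\otimes\mathbb{R}\cong\mathbb{R}^{r_1}\times\mathbb{C}^{r_2}$ gives $(R_f\otimes\mathbb{R})^\times[2]=\{\pm 1\}^{r_1+r_2}$, and the norm-one condition cuts this in half, yielding exactly $2^{r_1+r_2-1}=\sigma(r_2)$. Combining this with the surjectivity and multiplicity steps establishes that $\mathcal{F}_A\cdot R_A^{r_2,\delta}$ is a $\sigma(r_2)$-fold cover. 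The main obstacle I anticipate is the bookkeeping around points with non-trivial $\SO_A(\mathbb{Z})$-stabiliser and around boundary ambiguities of $\mathcal{F}_A$ and $R_A^{r_2,\delta}$; both contribute only on measure-zero sets and so do not disturb the multiset equality, which is all that will be needed for the geometry-of-numbers argument in the subsequent sections.
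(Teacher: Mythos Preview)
Your proposal is correct and takes essentially the same approach as the paper. The paper's proof is a single sentence noting that the stabiliser in $\SO_A(\mathbb{R})$ of $B\in V_A^{r_2,\delta}(\mathbb{R})$ coincides with the stabiliser of $(A,B)$ in $\SL_n(\mathbb{R})$ and hence has size $\sigma(r_2)$; you have simply unpacked the surrounding standard argument (surjectivity via decomposing $g=\gamma h$, multiplicity via counting cosets in $\SO_A(\mathbb{Z})\backslash\SO_A(\mathbb{Z})g_0\Stab_{\SO_A(\mathbb{R})}(r)$) and supplied the explicit computation $(R_f\otimes\mathbb{R})^\times[2]_{N\equiv 1}\cong\{\pm1\}^{r_1+r_2-1}$, which is valid here since $n$ odd forces $r_1\ge 1$.
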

\begin{proof}
The stabiliser in $\SO_A(\mathbb{R})$ of an element $B \in V_{A,b}^{r_2,\delta}(\mathbb{R})$ is the stabiliser in $\SL_n(\mathbb{R})$ of $(A,B)$. This stabiliser has size $\sigma(r_2) = 2^{r_1+r_2-1}$ by Example \ref{archimedean example}.
\end{proof}

Now, $\mathcal{F}_A \cdot R_{A,b}^{r_2,\delta}$ is a $\sigma(r_2)$-fold cover of a fundamental domain for the action of $\SO_A(\mathbb{Z})$ on $V_{A,b}^{r_2,\delta}(\mathbb{R})$ and, for $X>0$, recall that we had set $\mathcal{F}_A \cdot R_{A,b}^{r_2,\delta}(X) := \{v \in \mathcal{F}_A \cdot R_{A,b}^{r_2,\delta} \colon H(v)<X\}$. For $L \subset V_{A,b}^{r_2,\delta}(\mathbb{Z}) := V_{A,b}^{r_2,\delta} (\mathbb{\mathbb{R}}) \cap V_{A,b}(\mathbb{Z})$ an $\SO_A(\mathbb{Z})$-invariant set, we therefore have \[N(L;X) = \frac{1}{\sigma(r_2)} \#(\mathcal{F}_A \cdot R_{A,b}^{r_2,\delta}(X) \cap L^{{\rm irr }}).\]
In \S \ref{sec: averaging and cusp} and \ref{sec: sieves}, we obtain volume formulas for $N(L_{A,b} \cap V_{A,b}^{r_2,\delta}(\mathbb{Z}); X)$ for $L_{A,b}$ the restriction of lattice in $V_A(\Z)$ to $V_{A,b}(\Z)$.

\section{Averaging and cutting off the cusp}\label{sec: averaging and cusp}

In this section, we find an asymptotic formula for $N(L_{A,b} \cap V_{A,b}^{r_2,\delta}(\mathbb{Z}); X)$, for $L_{A,b}$ the restriction of lattice in $V_A(\Z)$ to $V_{A,b}(\Z)$, in terms of the volume of $\mathcal{F}_A \cdot R_{A,b}^{r_2,\delta}(X)$ by carrying out the averaging and cutting off the cusp parts of the geometry of numbers method.

\begin{thm}\label{thm: rough count}
For $A \in \mathscr{L}_{\mathbb{Z}}$ and $L_{A,b} \subset V_{A,b}(\Z)$ the restriction of a lattice in $V_{A}(\Z)$ to $V_{A,b}(\Z)$, we have \[N(L_{A,b} \cap V_{A,b}^{r_2,\delta}(\mathbb{Z});X) = \frac{1}{\sigma(r_2)}\frac{\Vol (\mathcal{F}_A \cdot R_{A,b}^{r_2,\delta}(X))}{\mathrm{co}\Vol(L_{A,b})} +  o(X^{n(n+1)/2-1}),\] where $\Vol$ is the Euclidean measure on $V_{A,b}(\R)$ normalised so that $V_{A,b}(\Z)$ has covolume $1$. \footnote{This theorem also holds if we assume that $\det A \neq \pm 1$ (but non-zero). We briefly mention the changes to the proof. In this case, by Witt's decomposition theorem, we may take instead of $A_{pq}$, a matrix of that has the same form as $A_{pq}$ over $\Q$ but where the identity block is replaced by an anisotropic quadratic form over $\mathbb{Q}$, in diagonal form, and of the same determinant as $A$. The rest of the arguments of the section then go through \emph{mutatis mutandis} to give the same theorem.}
\end{thm}

The essential difficulty in writing $N(L_{A,b} \cap V_{A,b}^{r_2,\delta}(\mathbb{Z});X)$ in terms of the volume of the region $\mathcal{F}_A \cdot R_{A,b}^{r_2,\delta}(X)$ is that when $A$ is isotropic over $\Q$, $\mathcal{F_A}$ has cusps going to infinity. This creates long thin regions in $\mathcal{F}_A \cdot R_{A,b}^{r_2,\delta}(X)$ where any approximation of the number lattice points in terms of the volume gets progressively worse as $X \rightarrow \infty$. \footnote{Thinking of a long thin rectangle around an axis in $\R^d$ gives a good mental image of this phenomenon.} Bhargava's averaging technique \cite{MR2183288} overcomes this obstacle by averaging the count of lattice points in $\mathcal{F}_A \cdot R_{A,b}^{r_2,\delta}(X)$ over many such translates $\mathcal{F}_A h \cdot R_{A,b}^{r_2,\delta}(X)$ of the fundamental domain $\FF_A$ by elements $h$ in some compact subset $G_0$ of $\SO_A(\R)$. A Fubini-type theorem allows us to switch the domain of integration in this average from $G_0$ to $\FF_A$, giving 
\begin{equation*}
    N(L_{A,b} \cap V_{A,b}^{r_2,\delta}(\mathbb{Z}); X) = \frac{1}{\sigma(r_{2})\Vol(G_{0})} \int_{h \in \FF_{A}} \#(hG_{0}R_{A,b}^{r_2,\delta}(X) \cap \mathcal{L}^{{\rm irr }}) \,\, dh,
\end{equation*}
This integral consists of two parts: a main body part where the integrand $\#(hG_{0}R_{A,b}^{r_2,\delta}(X) \cap \mathcal{L}^{{\rm irr }})$ is well approximated by the volume of $hG_{0}R_{A,b}^{r_2,\delta(X)}$ and a cuspidal part where the lattice points in $hG_{0}R_{A,b}^{r_2,\delta}(X)$ cluster onto coordinate hyperplanes in $V_{A,b}$. 

The goal of ``cutting off the cusp'' is to show that the cuspidal part of the integral is negligible in the sense that it is $o(X^{n(n+1)/2-1})$. This is achieved by a detailed analysis of the error term in Davenport's Lemma applied to $\#hG_{0}R_{A,b}^{r_2,\delta}(X) \cap \mathcal{L}^{\text{irr}}$ as $h$ travels up the cusps of $\FF_A$ until it hits the ``deep'' regions of the cusps where $hG_{0}R_{A,b}^{r_2,\delta}(X) \cap \mathcal{L}^{\text{irr}} = \emptyset$.

The proof of Theorem \ref{thm: rough count} is organised as follows. We begin by choosing fundamental domains that will be convenient for calculations. We then reduce the asymptotic of Proposition \ref{thm: rough count} to Lemma \ref{cusp cutting lemma}. Lastly, we prove Lemma \ref{cusp cutting lemma} by a combinatorial induction argument inspired by \cite{MR3719247}. The key technical result in the proof is the cusp cutting estimate Lemma \ref{cusp cutting lemma} which applies to all $A \in \mathscr{L}_{\Z}$ irrespective of the degree of anisotropy of $A$ over $\Q$. 
The notation and setup are a combination of that of \cite{MR3719247} and \cite{BhargavaHankeShankar}.

\subsection{Choice of $\FF_{A}$ and Haar measure on $\SO_{A}(\R)$} \label{subsec: choice of FA and haar measure on SOA}

Fix $A,b,r_{2},\delta$ and let $L_{A,b}$ be the restriction of a lattice $L \subset V_A(\Z)$ to $V_{A,b}(\Z)$.

\subsubsection{Choosing $\mathcal{F}_A$}
If $A$ is anisotropic over $\mathbb{Q}$, there is a compact fundamental domain for the action of $\SO_A(\mathbb{Z})$ on $\SO_A(\mathbb{R})$. Choose such a compact fundamental domain $\FF_{A}$ and choose any Haar measure $df$ on $\SO_{A}(\R)$. \\

If $A$ is isotropic over $\mathbb{Q}$, there exists an element $g_A \in \SL_n(\Q)$ and a unique pair $p,q$ such that $n = p+q$ and $\frac{n-1}{2} \equiv q \pmod 2$ satisfying $g_A^tAg_A = A_{pq}$ where \begin{equation*} 
A_{pq} := 
\left(\begin{array}{ccccccc}
& & & & & & 1 \\ 
& & & & & \iddots & \\ 
& & & & 1 & & \\ 
& & & \pm I_{|p-q|} & & & \\  
& & 1 & & & &  \\ 
& \iddots & & & & & \\ 
1 & & & & & &  
\end{array}\right),
\end{equation*} 
where the sign of $p-q$ matches the $\pm$ on the identity block. Define $m$ as the minimum of $p$ and $q$, $m = \min\{p,q\}$. 

If $K = \mathbb{R}$ or $\mathbb{Q}$, we define the maps $\sigma_V \colon V_{A,b}^{r_2,\delta} \rightarrow V_{A_{pq},b}^{r_2,\delta}$ and $\sigma_A \colon \SO_A(K) \rightarrow \SO_{A_{pq}}(K)$ by $\sigma_V(A,B) = (A_{pq},g_A^tBg_A)$ and $ \sigma_A(h) = g_A^{-1} h g_A$. Note that $H(A,B) = H(\sigma_V(A,B))$, that $\sigma_V(h \cdot v) = \sigma_A(h)\cdot \sigma_V(v)$, and that $\sigma_{V}$ is measure preserving if both $V_{A,b}^{r_2,\delta}$ and $V_{A_{pq},b}^{r_2,\delta}$ are equipped with their standard Euclidean measures. 

Let $\mathcal{L} \subset V_{A_{pq},b}^{r_2,\delta}(\mathbb{R})$ denote $\mathcal{L} := \sigma_{V}(L_{A,b} \cap  V_{A_,b}^{r_2,\delta}(\mathbb{R})) \subset \sigma_V(V_{A,b}^{r_2,\delta}(\mathbb{Z}))$ and $\mathcal{L}^{{\rm irr}} := \sigma_{V}(L_{A,b}^{{\rm irr}} \cap  V_{A,b}^{r_2,\delta}(\mathbb{R})))$. Because being ``bad'' is a property of $\Q$-orbits, Theorem \ref{thm: criterion for badness} gives a criterion on elements of $\mathcal{L}$ to be ``bad'' --- that is to say to come from a ``bad'' element of $L_{A,b}$ and thus to be contained in $\mathcal{L} \setminus \mathcal{L}^{{\rm irr}}$. 

Denote by $\Gamma \subset \SO_{A_{pq}}(\mathbb{R})$ the subgroup $\sigma_A(\SO_{A}(\mathbb{Z}))$. It is commensurable with $\SO_{A_{pq}}(\mathbb{Z})$ and therefore by \cite[Example 2.5]{MR147566} there exists a fundamental domain $\mathcal{F}_{A_{pq}}$ for the action of $\Gamma$ on $\SO_{A_{pq}}(\mathbb{R})$ which is contained in a finite union of $\SO_{A_{pq}}(\mathbb{Q})$ translates of a Siegel domain $\mathcal{S}$, say $\cup_\ell g_\ell \mathcal{S}$ for $g_\ell \in \SO_{A_{pq}}(\mathbb{Q})$.

We choose $\FF_{A} := \sigma_{A}^{-1}(\FF_{A_{pq}})$ as our fundamental domain. \\

With this notation and choices we have $$N(L_{A,b} \cap V_{A,b}^{r_2,\delta}(\mathbb{Z});X) = \frac{1}{\sigma(r_2)} \#(\mathcal{F}_{A_{pq}} \cdot \sigma_{V}(R_{A,b}^{r_2,\delta})(X) \cap \mathcal{L}^{{\rm irr }}).$$ 

\subsubsection{Choosing a Haar measure on $\SO_A(\R)$}

The choice of $A_{pq}$ above is not disinterested: it allows us to use a convenient Iwasawa decomposition. Let $\SO_{A_{pq}}(\R) = N'T'K'$ be the Iwasawa decomposition. We choose as our Siegel domain $\mathcal{S}$ the product $NTK$ where we choose $K$ to be compact, $N$ a bounded subset of the group of lower triangular matrices with $1$ on the diagonal and $T$ to be \[T := \left\{\begin{pmatrix} 
t_1^{-1} & & & & & & \\ 
& \ddots & & & & & \\ 
& & t_m^{-1} & & & & \\  
& & & I_{|p-q|} & & & \\ 
& & & & t_m & & \\ 
& & & & & \ddots & \\ 
& & & & & & t_1 \\  
\end{pmatrix} \colon t_1/t_2 > c, \ldots, t_{m-1}/t_m > c, t_m >c \right\}\] for some constant $c>0$. This can be found in many sources; see for instance, \cite[Example 2.5]{MR0148666}, \cite{MR1727346} or \cite{MR3452758}. Define $s_{i} = t_{i}/t_{i+1}$ for all $1 \le i \le m-1$ and $s_{m} = t_{m}$. \\
 
We choose a convenient Haar measure on $G = \SO_{A_{pq}}$. Let $dn$ denote a Haar measure on the unipotent group $N$, and $dk$ denote a Haar measure on the compact group $K$. For every $1 \le i \le m$ we write $d^{\times}t_i = \frac{dt_i}{t_i}$ and $d^{\times}s_i = \frac{ds_i}{s_i}$. Furthermore, we write $dt =\prod_{i=1}^m dt_i$, $d^{\times}t = \prod_{i=1}^m d^{\times}t_i$ and $ds = \prod_{i=1}^{m}ds_i$, $d^{\times}s = \prod_{i=1}^{m} d^{\times}s_i$. Changing variables between the $t$-coordinates and the $s$-coordinates gives us $dt = \left( \prod_{i=1}^{m} s_i^{i-1} \right) ds.$ We thus find 
\[d^\times t = \frac{1}{t_1 \cdots t_m} dt = \frac{1}{\prod_{i=1}^m s_i^i}\left( \prod_{i=1}^{m} s_i^{i-1} \right) ds = \frac{1}{s_1 \cdots s_m} ds = d^\times s. \]
We choose the Haar measure $dg$ on $G$ given in $NTK$-coordinates by 
\[dg = e^{-2\rho(H)} du \, d^{\times}t \, dk = \prod_{i=1}^{m} t_i^{2i-p-q} du \, d^{\times}t \, dk = \prod_{i=1}^{m} s_i^{i(i+1-p-q)} du \, d^{\times}s \, dk =  \prod_{i=1}^{m} s_i^{i(i+1-n)} du \, d^{\times}s \, dk .\]

\subsection{Coordinates on $V_{A_{pq}}$ and the partial order $(\mathcal{E},\prec)$}

The matrix entries $\{b_{ij}\}$ for $1 \le i \le j \le n$ and $(i,j) \neq ((n-1)/2,(n-1)/2)$ form a set of coordinates on $V_{A_{pq},b}$ which we denote $\mathcal{E}$. \footnote{We choose the notation $\mathcal{E}$ for ``entries''! The condition that the resolvent binary form has $b$ as $x^{n-1}y$ coefficient translates to a linear equation on $V_{A_{pq}}$ which always involves $b_{(n-1)/2\,\,(n-1)/2}$.} We will define a partial order on the $n(n+1)/2-1$ coordinates $\{b_{ij}\}$ in $\mathcal{E}$, recording scaling of the different entries of the matrix $B$ under the torus action. This scaling is the weight. 

\begin{defn}
The weight $w(b_{ij})$ of an element $b_{ij} \in \mathcal{E}$ is the factor by which $b_{ij}$ scales under the action of $(t_1^{-1},\ldots,t_m^{-1},1, \ldots,1,t_m,\ldots,t_1) \in T$. 
\end{defn}

The weights of the elements of $\mathcal{E}$ in the $t$ and in the $s$-coordinates on $T$ are as follows:
\begin{enumerate}[1)]
\item $w(b_{11}) = t_1^{-2} = s_1^{-2}\cdots s_m^{-2}$;
\item $w(b_{ij}) = t_i^{-1} t_j^{-1} = s_i^{-1} \cdots s_{j-1}^{-1} s_j^{-2} \cdots s_n^{-2}$ if $i \le m$ and $j \le m$;
\item $w(b_{ij}) = t_i^{-1} = s_i^{-1} \cdots s_n^{-1}$ if $i \le m$ and $m+1 \le j \le m+|p-q|$;  
\item $w(b_{ij}) = t_i^{-1} t_{n-j+1} = s_i^{-1} \cdots s_{n-j}^{-1}$ if $i \le m$ and $m+|p-q|+1 \le j \le n$;
\item $w(b_{ij}) = 1 $ if $m+1 \le i \le m+|p-q|$ and $m+1 \le j \le m+|p-q|$; 
\item $w(b_{ij}) = t_{n-j+1} = s_{n-j+1} \cdots s_n$ if $m+1 \le i \le m+|p-q|$ and $m+|p-q|+1 \le j \le n$;
\item $w(b_{ij}) = t_{n-i+1} t_{n-j+1} = s_{n-i+1} \cdots s_{n-j} s_{n-j+1}^2 \cdots s_n^2$ if $m+|p-q|+1 \le i \le n$ and $m+|p-q|+1 \le j \le n$.
\end{enumerate}

Note that the product of the weights of all the coordinates is $1$. 

The partial order on $\mathcal{E}$ is defined as follows. 

\begin{defn} Let $b,b' \in \mathcal{E}$. We say that $b \prec b'$ if in the expression for $w(b)$ in the $s$-coordinates, the exponents of the variables $s_1,\cdots,s_m$ are all smaller than or equal to the corresponding exponents appearing in the expression for $w(b')$. 
\end{defn}

The relation $\prec$ defines a partial order on $\mathcal{E}$. \footnote{Indeed, $\prec$ is the product order on the tuple of $s$-exponents in the weight.} To give an example, $b_{11} \prec b_{m+1\,m+1}$ because $w(b_{11}) = s_1^{-2} \cdots s_m^{-2}$ while $w(b_{m+1\,m+1}) = 1 = s_1^0 \cdots s_m^0$. On the other hand, $b_{1\,n-2}$ and $b_{2\,n-3}$ cannot be compared in $\prec$ because $w(b_{1\,n-2}) = s_1^{-1} s_2^{-1}$ while $w(b_{2\,n-3}) = s_2^{-1} s_3^{-1}$. \\

The main fact we will need about the partial order  $(\mathcal{E},\prec)$ is that if $i \le i'$ and $j \le j'$, then $b_{ij} \prec b_{i'j'}$. \footnote{In particular, $b_{11}$ is the minimum of $(\mathcal{E},\prec)$!}

\subsection{Proof of Theorem \ref{thm: rough count} up to a cusp cutting estimate} \label{subsec: proof up to cusp cutting estimate}

Fix $A,b,r_{2},\delta$, let $L_{A,b}$ be the restriction of a lattice in $L \subset V_A(\Z)$ to $V_{A,b}^{r_2,\delta}(\Z)$. Let $\mathcal{L}, \mathcal{L}^{\irr} \subset V_{A_{pq},b}^{r_2,\delta}(\R)$ be defined from $L_{A,b}$ as above.

\subsubsection{Averaging}

By Bhargava's averaging argument \cite[Theorem 2.5]{MR3272925}, we find that for any non-empty open bounded $K$-invariant subset $G_{0} \subset \SO_{A_{pq}}(\R)$, we have: 
\begin{equation} \label{eqn: averaged count}
    N(L_{A,b} \cap V_{A,b}^{r_2,\delta}(\mathbb{Z}); X) = \frac{1}{\sigma(r_{2})\Vol(G_{0})} \int_{h \in \FF_{A_{pq}}} \#(hG_{0}\sigma_{V}(R_{A,b}^{r_2,\delta})(X) \cap \mathcal{L}^{{\rm irr }}) \,\, dh,
\end{equation}
where $dh$ denotes a Haar measure on $\SO_{A}$. The formula is self-normalising due to the term $1/\Vol(G_{0})$ and is thus independent of the choice of $dh$. We choose $dh$ to be the Haar measure chosen in \S \ref{subsec: choice of FA and haar measure on SOA}.  

\subsubsection{Main body and cuspidal part of the averaged count}

Recall that $\FF_{A_{pq}}$ is contained in a finite union of $\SO_{A_{pq}}(\mathbb{Q})$ translates of a Siegel domain $\mathcal{S}$, $$\FF_{A_{pq}} \subset \cup_\ell g_\ell \mathcal{S}$$ for some $g_\ell \in \SO_{A_{pq}}(\mathbb{Q})$. 
For each $\ell$, let $\widetilde{C}_\ell$ denote the minimum absolute value of the non-zero entries of elements of $g_\ell^{-1} \sigma_{V}(V_{A,b}(\Z))$. This is non-zero because $g_{\ell}^{-1}\sigma_{V}(V_{A,b}(\Z))$ is a lattice commensurable to $V_{A_{pq},b}(\Z)$. \\

We now divide $\FF_{A_{pq}}$ into two regions (which depend on $X$). We let $\FF_{A_{pq}}'$ denote the set of $h \in \FF_{A_{pq}}$ such that for some $\ell$, $\lvert b_{11}(g_\ell^{-1}v)\rvert < \widetilde{C_\ell}$ for all $v \in hG_{0}\sigma_{V}(R_{A,b}^{r_{2},\delta})(X)$. \footnote{We warn the reader that $\FF_{A_{pq}}'$ depends on $X$, although we omit this dependence from the notation for legibility.} We will also consider its complement $\FF_{A_{pq}} \setminus \FF_{A_{pq}}'$. We will refer to the portion of the integral in \eqref{eqn: averaged count} over $\FF_{A_{pq}}'$ as the \textbf{cuspidal} part of the integral and to the portion over its complement $\FF_{A_{pq}} \setminus \FF_{A_{pq}}'$ as the \textbf{main body} part. \\

The region $\FF_{A_{pq}}'$ is defined in this way because for $h \in \FF_{A_{pq}}'$, $\#(hG_{0}\sigma_{V}(R_{A,b}^{r_2,\delta})(X) \cap \mathcal{L}^{{\rm irr }})$ is not well approximated by the volume of $hG_{0}\sigma_{V}(R_{A,b}^{r_2,\delta})(X)$ since $g_\ell^{-1}hG_{0}\sigma_{V}(R_{A,b}^{r_2,\delta})(X) \cap g_\ell^{-1}\mathcal{L}^{{\rm irr }}$ lies entirely in the hypersurface $\{b_{11} = 0\}$ of $V_{A_{pq},b}$, for some $\ell$.

\subsubsection{Proof of Theorem \ref{thm: rough count} up to a cusp cutting estimate}

To prove Theorem \ref{thm: rough count}, we show that the cuspidal part of the integral in \eqref{eqn: averaged count} is negligible --- that is $o(X^{\frac{n(n+1)}{2}-1})$ --- while the integrand over the main body part of the integral is well approximated by the volume of $hG_{0}\sigma_{V}(R_{A,b}^{r_2,\delta})(X)$. \\

For the cuspidal part, there are cases depending on whether $\FF_{A_{pq}}$ can be taken compact. \footnote{$\FF_{A_{pq}}$ can be taken compact if and only if $A_{pq}$ is anisotropic over $\Q$.} 

\begin{enumerate}[1)]
\item If $A_{pq}$ is anisotropic, $\FF_{A_{pq}}$ may be chosen to be compact. This implies that $\FF_{A_{pq}}'$ is eventually empty by Remark \ref{homogeneously expanding}. Therefore, $$\int_{h \in \FF_{A_{pq}}'} \# (hG_{0}\sigma_{V}(R_{A,b}^{r_{2},\delta})(X) \cap \mathcal{L}^{{\rm irr}}) \,\, dh = 0$$ for $X$ large enough and is in particular $o(X^{\frac{n(n+1)}{2}-1})$. 
\item If $A_{pq}$ is isotropic over $\Q$, we will show in \S \ref{subsec: proof of cusp cutting estimate}, as a consequence of Lemma \ref{cusp cutting lemma}, that the following \textit{cusp cutting estimate} holds $$\int_{h \in \FF_{A_{pq}}'} \#(hG_{0} \sigma_{V}(R_{A,b}^{r_2,\delta})(X) \cap \mathcal{L}^{{\rm irr }}) \,\, dh = o(X^{\frac{n(n+1)}{2}-1}).$$ 
\end{enumerate}
Thus, granting item 2) above, we find that in all cases \[N(L; X) =\frac{1}{\sigma(r_{2})\Vol(G_{0})} \int_{h \in \FF_{A_{pq}} \setminus \FF_{A_{pq}}'} \#(hG_{0} \cdot \sigma_{V}(R_{A,b}^{r_2,\delta})(X) \cap \mathcal{L}^{{\rm irr }}) \,\, dh + o(X^{\frac{n(n+1)}{2}-1}).\] 

The average number of ``bad'' elements of $\mathcal{L}$ over the main body part is negligible. 

\begin{lem}
The integral of $\#(hG_{0} \sigma_{V}(R_{A,b}^{r_2,\delta})(X) \cap \mathcal{L} \setminus \mathcal{L}^{{\rm irr }})$ over $\FF_{A_{pq}} \setminus \FF_{A_{pq}}'$  is $o(X^{\frac{n(n+1)}{2}-1})$. 
\end{lem}
\begin{proof}
This follows
by adapting the proof of \cite[Proposition 4.6]{MR3782066} with the obvious modifications.
\end{proof} 

This allows us to change the integrand to $\#(hG_{0}\cdot \sigma_{V}(R_{A,b}^{r_2,\delta})(X) \cap \mathcal{L})$ in the main body part and apply Davenport's Lemma. 

\begin{lem}[Davenport's Lemma, \cite{MR43821} and \cite{MR166155}] \label{lem: Davenport's lemma}
Let $E \subset \mathbb{R}^n$ be a bounded semi-algebraic multiset with maximum multiplicity at most $m$, defined by $k$ algebraic inequalities each having degree at most $l$. Then the number of integral points in $E$ counted with multiplicity is \[\Vol(E)+O\left(\max_{\overline{E}} \{\Vol(\overline{E}),1\}\right)\] where $\Vol(\overline{E})$ denotes the greatest $d$-dimensional volume of a projection of $E$ onto a $d$-dimensional coordinate hyperplane for $1 \le d \le n-1$ and the implied constant depends only on $k,l,m$. \footnote{We note that a version of Davenport's lemma holds more generally for regions definable in an $o$-minimal structure, see \cite[Theorem 1.3]{MR3264671}. However, it is most commonly used, as is the case in the present article, for semi-algebraic regions.}
\end{lem}

Let $CX$ be a uniform bound on the coefficients of elements in $G_{0} \sigma_{V}(R_{A,b}^{r_{2},\delta})(X)$. Write $\Vol_{\mathcal{L}}(\cdot)$ for the Euclidean measure on $V_{A_{pq},b}(\R)$ normalised so that $\mathcal{L}$ has covolume $1$. Using Davenport's lemma, we find: 
\begin{align*}
&N(L;X) \\
&= \frac{1}{\sigma(r_{2})\Vol(G_{0})} \int_{h \in \FF_{A_{pq}} \setminus \FF_{A_{pq}}'} \Vol_{\mathcal{L}}(hG_{0}\sigma_{V}(R_{A,b}^{r_2,\delta})(X)) + O \left(\frac{\Vol_{\mathcal{L}}(hG_{0}\sigma_{V}(R_{A,b}^{r_{2},\delta})(X))}{Xw(b_{11})} \right) dh + o(X^{\frac{n(n+1)}{2}-1}) \\
&= \frac{\Vol(\FF_{A_{pq}} \setminus \FF_{A_{pq}}')\Vol_{\mathcal{L}}(G_{0}\sigma_{V}(R_{A,b}^{r_{2},\delta}(X))}{\sigma(r_{2})\Vol(G_{0})} + o(X^{\frac{n(n+1)}{2}-1}) \\
&= \frac{1}{\sigma(r_{2})}\Vol_{\mathcal{L}}(\FF_{A} \cdot R_{A,b}^{r_{2},\delta}(X))+ o(X^{\frac{n(n+1)}{2}-1}),
\end{align*}
where $w(b_{11})$ is understood for an element $h \in g_{\ell} \mathcal{S}$ written as $g = g_{\ell} ntk$ as the weight of the $t$. We have used that $\Vol(\FF_{A_{pq}}') = o(1)$ (since being in $\FF_{A_{pq}}'$ requires at least one of the $s_{i}$ to be at least $(CX/\widetilde{C}_\ell)^{\frac{1}{2m}}$), the estimates $\int_{h \in \FF_{A_{pq}} \setminus \FF_{A_{pq}}'} O \left(1/Xw(b_{11}) \right) dh =  O(X^{-1/2})$ if $n = 3$ and $O(X^{-1})$ for $n > 3$, and the Jacobian change of variable formula Proposition \ref{change of measure formula}. \\

This yields the asymptotic formula
\[N(L_{A,b} \cap V_{A,b}^{r_2,\delta}(\mathbb{Z});X) = \frac{1}{\sigma(r_2)}\frac{\Vol (\mathcal{F}_A \cdot R_{A,b}^{r_2,\delta}(X))}{\mathrm{co}\Vol(L_{A,b})} +  o(X^{\frac{n(n+1)}{2}-1}),\]
completing the proof of Theorem \ref{thm: rough count}. \\

It remains to prove the cusp cutting estimate of item 2) above.

\subsection{Proof of the cusp cutting estimate} \label{subsec: proof of cusp cutting estimate}

The goal of this subsection is to prove the \textit{cusp cutting estimate} 
\begin{equation} \label{eqn: cusp cutting estimate}
    \int_{h \in \FF_{A_{pq}}'} \#(hG_{0} \sigma_{V}(R_{A,b}^{r_2,\delta})(X) \cap \mathcal{L}^{{\rm irr }}) \,\, dh = o(X^{\frac{n(n+1)}{2}-1}).
\end{equation}

Recall that $\FF_{A_{pq}}$ is contained in a finite union of $\SO_{A_{pq}}(\mathbb{Q})$ translates of a Siegel domain $\mathcal{S}$, $\FF_{A_{pq}} \subset \cup_i g_\ell \mathcal{S}$ for $g_\ell \in \SO_{A_{pq}}(\mathbb{Q})$ and that $\widetilde{C}_\ell$ denotes the minimum absolute value of the non-zero entries of elements of $g_\ell^{-1}\sigma_{V}(V_{A,b}(\Z))$.

\begin{defn}
For $\E_1 \subset \E$, we define \[V_{A_{pq},b}(\mathbb{R})(\E_1,\ell) = \{B \in V_{A_{pq},b}(\mathbb{R}) \colon \left| b_{ij}(B) \right| < \widetilde{C}_\ell \text{ if and only if } b_{ij} \in \E_1 \}.\] 
\end{defn}
We set $g_\ell^{-1} \cdot \mathcal{L}(\E_{1},\ell) := g_\ell^{-1} \cdot \mathcal{L} \cap V_{A_{pq},b}(\mathbb{R})(\E_1,\ell)$. 

Note that $ \#(hG_{0}\sigma_{V}(R_{A,b}^{r_2,\delta})(X) \cap \mathcal{L}^{{\rm irr }}) =  \#(g_{\ell}^{-1}hG_{0}\sigma_{V}(R_{A,b}^{r_2,\delta})(X) \cap g_{\ell}^{-1}\cdot \mathcal{L}^{{\rm irr }})$. 
Therefore, by definition of $\FF_{A_{pq}}'$, it is sufficient to show that 
\begin{equation} \label{eqn: cusp integral over Siegel set}
\int_{s \in \mathcal{S}} \#(sG_{0} \sigma_{V}(R_{A,b}^{r_2,\delta})(X) \cap g_{\ell}^{-1}\mathcal{L}^{{\rm irr }}(\E_{1},\ell)) \,\, ds = O_\epsilon\left(X^{\left(\frac{n(n+1)}{2}-1\right)-1+\epsilon}\right)
\end{equation}
for all $\ell$, $g_{\ell}$ and all $\E_1 \subset \mathcal{E}$ satisfying $b_{11} \in \E_1$ in order to prove the cusp cutting estimate \eqref{eqn: cusp cutting estimate}. \\

Fix an element $g_{\ell}$. The criterion for ``bad'' elements in Theorem \ref{thm: criterion for badness} gives us a priori bounds on the coordinates $s_i$ in the integral above, as we now explain. Let $C'$ be an absolute constant such that $C'X$ bounds the absolute values of all the entries of elements $B \in t^{-1}ut G_0 \sigma_{V}(R_A^{r_2,\delta})(X)$ for $t \in T$ and $u \in N$. If $t = (s_1^{-1},\ldots,s_m^{-1},1,\ldots,1,s_m,\ldots,s_1) \in T$ and $C'X w(b_{i_0 \, n-i_0}) < \widetilde{C}_\ell$ for some $i_0 \in \{1,\ldots,m\}$, then $C'Xw(b_{ij})< \widetilde{C}_\ell$ for all $i \le i_0$ and $j \le n-i_0$ and then the {\bf \emph{rectangles}} case of the criterion for badness Theorem \ref{thm: criterion for badness} gives that $\#(t(t^{-1}ut)G_{0} \sigma_{V}(R_{A,b}^{r_2,\delta})(X) \cap g_{\ell}^{-1}\cdot\mathcal{L}^{{\rm irr }}) = 0$. Therefore, we may assume that
\begin{equation} \label{eqn: s conditions}
    s_i < (C'/\widetilde{C}_\ell)X
\end{equation}
for all $i \in \{1,\ldots,m\}$ by looking at the weights of elements above the anti-diagonal. Let $C = C'/\widetilde{C}_\ell$ and $T_X$ denote the set of $t = (s_1^{-1},\ldots,s_m^{-1},1,\ldots,1,s_m,\ldots,s_1) \in T$ which satisfy condition \eqref{eqn: s conditions}. 

By Davenport's lemma and the fact that our region contains the translate of a homogeneously expanding region in $X$, we see (by the same argument used to justify \cite[Equation (21)]{MR3719247}) that
\begin{align*}
&\int_{h \in \mathcal{S}} \#(hG_{0} \sigma_{V}(R_{A,b}^{r_2,\delta})(X) \cap g_{\ell}^{-1}\mathcal{L}^{{\rm irr }}(\E_{1},\ell)) dh \\ 
&= O\left(\int_{t \in T_X} \Vol(t G_0 \sigma_{V}(R_{A,b}^{r_2,\delta})(X) \cap V_{A_{pq},b}(\mathbb{R})(\E_1,\ell)) \prod_{i=1}^{m} s_i^{i(i+1-p-q)} d^{\times}s  \right) \\
&= O\left(X^{\left(\frac{n(n+1)}{2}-1\right)-\#\E_1} \int_{t \in T_X} \prod_{b_{ij} \not\in \E_1} w(b_{ij}) \prod_{i=1}^{m} s_i^{i(i+1-n)} d^\times s \right).
\end{align*}
Thus, we have reduced the problem to estimating the following integral. 

\begin{defn}
The active integral of $\E_1 \subset \mathcal{\E}$ is defined by \footnote{We choose the notation $\widetilde{I}$, and later on $I$, for ``integral''.} \[\widetilde{I}(\E_1,X) := X^{\left(\frac{n(n+1)}{2}-1\right)-\#\E_1}  \int_{t \in T_X} \prod_{b_{ij} \not\in \E_1} w(b_{ij}) \prod_{i=1}^{m} s_i^{i(i+1-n)} d^\times s .\]
\end{defn}

\noindent Recall, that $b_{ij} \prec b_{i_0j_0}$ when $i \le i_0$ and $j \le j_0$. Therefore, if $\E_1 \subset \E$ contains $b_{i_0j_0}$ but not $b_{ij}$, then \[\widetilde{I}\left(\E_1 \setminus \{b_{i_0j_0} \} \cup \{b_{ij}\}, X \right) \ge \widetilde{I}(\E_1,X).\] As a result, to obtain an upper bound for $\widetilde{I}(\E_1, X)$ we may assume that if $b_{i_0j_0} \in \E_1$, then $b_{ij} \in \E_1$ for all $i \le i_0$ and $j \le j_0$. We call such $\E_{1}$ left-closed and up-closed. Furthermore, suppose $\E_1$ contains any element on, or on the right of, the off anti-diagonal within the first $m$-rows. In that case, the {\bf \emph{rectangles}} case of the criterion for badness Theorem \ref{thm: criterion for badness} gives $\#(hG_{0} \sigma_{V}(R_{A,b}^{r_2,\delta})(X) \cap g_{\ell}^{-1}\mathcal{L}^{{\rm irr }}(\E_{1})) =0$. So, in that case, we are already done. Thus, it suffices to consider $\E_{1}$ containing only elements to the left of the off-antidiagonal. 

\begin{defn} Define $\E_0 \subset \E$ as the set of coordinates $b_{ij}$ such that $i \le m$ and $i \le j$ and $i+j \le n-1$. 
\end{defn}

It is easy to see that $\E_0$ has size $\#\E_0 = m(n-(m+1))$. If $|p-q|=1$, every element in $V(\mathbb{Z})(\E_0)$ is ``bad'' and it suffices to consider $\widetilde{I}(\E_1,X)$ for all $\E_1 \subsetneq \E_0$. If $|p-q|>1$, all $\E_1 \subset \E_{0}$ need to be considered. 

The product of the weights of all the coordinates is $1$. We normalise the active integral to simplify notation. 

\begin{defn}
We define for a subset $\E_1 \subset \E_{0}$ \[I(\E_1,X) = X^{-\left(\frac{n(n+1)}{2}-1\right)} \widetilde{I}(\E_1,X) = X^{-\#\E_1}  \int_{t \in T_X} \prod_{b_{ij} \in \E_1} w(b_{ij})^{-1} \prod_{i=1}^{m} s_i^{i(i+1-n)} d^\times s .\]
\end{defn}

In this way, the desired cusp cutting estimate \eqref{eqn: cusp cutting estimate} will follow from the following cusp-cutting lemma, which is the main technical result of the section.

\begin{lem}[Cusp cutting lemma] \label{cusp cutting lemma}
Let $\E_1$ be a non-empty proper subset of $\E_0$. Then we have the estimate \[I(\E_1,X) = O_\epsilon \left(X^{-1+\epsilon}\right).\]
We also have $I(\emptyset) = O(1)$ and $I(\E_0) = O\left(X^{m(2m+1-n)+\epsilon}\right)$. 
\end{lem}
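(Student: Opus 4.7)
The plan is to exploit the fact that the integrand $\prod_{b_{ij}\in U_1} w(b_{ij})^{-1}\cdot\prod_{i=1}^m s_i^{i(i+1-n)}$ is a monomial $\prod_k s_k^{f_k(U_1)}$ in the simple-root coordinates $s_1,\ldots,s_m$, so the integral over $T_X=\{c<s_k<CX : 1\le k\le m\}$ factors completely. First I would reduce to $U_1$ being down-closed in the $(i,j)$-product order (a ``staircase'' shape), using the monotonicity $\widetilde I(U_1\setminus\{b_{i_0j_0}\}\cup\{b_{ij}\})\ge\widetilde I(U_1)$ established just before the lemma statement.

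With this reduction, the integral becomes
$$I(U_1,X)=X^{-\#U_1}\prod_{k=1}^m \int_c^{CX}s_k^{f_k(U_1)}\,d^\times s_k,$$
and each one-dimensional factor is $O_\epsilon(X^{\max(f_k(U_1),0)+\epsilon})$ by an elementary case split on the sign of $f_k(U_1)$, with the $\epsilon$ absorbing any logarithmic term arising when $f_k(U_1)=0$. Combining,
$$I(U_1,X)=O_\epsilon\Bigl(X^{-\#U_1+\sum_{k=1}^m \max(f_k(U_1),0)+\epsilon}\Bigr),$$
so the main assertion reduces to the combinatorial inequality $\sum_{k=1}^m \max(f_k(U_1),0)\le\#U_1-1$ for every non-empty proper staircase $U_1\subsetneq U_0$.

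To prove this inequality, I would decompose $U_1$ according to the three regimes of the column index $j$ (left block $j\le m$, middle block $m+1\le j\le m+|p-q|$, right block $m+|p-q|+1\le j$) and read off the contribution of each element from the explicit weight formulas in Construction~1. Writing $e_k(U_1):=f_k(U_1)-k(k+1-n)$, each cell of $U_1$ contributes an indicator count of the form $\mathbf{1}[i\le k]+\mathbf{1}[j\le k]$ in the left block, $\mathbf{1}[i\le k]$ in the middle block, and $\mathbf{1}[i\le k]-\mathbf{1}[n-j+1\le k]$ in the right block. I would use the staircase constraints $\mu_i\ge\mu_{i+1}+1$ on the row-lengths of $U_1$ together with the strongly negative penalty $g_k=k(k+1-n)$ (especially for small $k$) to absorb the bulk of $e_k(U_1)$, and would read off the unit of saving from the fact that $U_1$ is missing at least one cell of $U_0$: that missing cell drops $e_k$ by at least one at some $k$ for which $f_k$ was positive. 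The boundary assertions for $U_1=\emptyset$ and $U_1=U_0$ follow from the same factorisation, using the identity $\sum_k k(k+1-n)=\frac{1}{6}m(m+1)(2m+4-3n)$ and a direct telescoping computation in the case of the full staircase to recover the stated exponents.

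The main obstacle is the combinatorial inequality, whose verification bifurcates according as $|p-q|=1$ or $|p-q|>1$ (as already illustrated by the two base-case examples preceding the lemma) and requires careful bookkeeping across the three column regimes for each shape of $U_1$, particularly near the right edge where the cancellation $\mathbf{1}[i\le k]-\mathbf{1}[n-j+1\le k]$ makes the right-block contributions smaller than the naive count suggests.
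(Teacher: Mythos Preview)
Your factorisation of the integral over the box $T_X$ into a product $\prod_k \int_c^{CX} s_k^{f_k(U_1)}\,d^\times s_k$ is valid, and the reduction of the main assertion to the combinatorial inequality $\sum_k \max(f_k(U_1),0)\le \#U_1-1$ for non-empty proper staircases $U_1\subsetneq U_0$ is correct. This is a different organisation from the paper's argument. The paper does not factor completely over the $s_k$; instead it passes to the complement $U_1'=U_0\setminus U_1$, writes $I(U_1,X)=I'_m(U_1',X)$, and inducts on $m$ by peeling off the first row: using $\int_c^{CX} s^{k_1+k_2}\,d^\times s \ll \int_c^{CX} s^{k_1}\,d^\times s\cdot\int_c^{CX} s^{k_2}\,d^\times s$, it bounds $I'_m\le J_m\cdot K_m$ where $J_m$ carries the first-row contribution and $K_m=I'_{m-1}$. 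It then evaluates $J_m$ explicitly for each of the five possible shapes of the first-row complement (indexed by where the left endpoint $k$ of the removed segment $\{b_{1,k},\ldots,b_{1,n-2}\}$ falls relative to the three column regimes), verifying $J_m=O_\epsilon(X^{-1+\epsilon})$ case by case, with a separate treatment of $|p-q|=1$ because there the base case $m=1$ is vacuous and one must start the induction at $m=2$ using the Squares criterion to rule out $U_1=U_0$.

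The gap in your proposal is that the combinatorial inequality is the entire content of the lemma, and you have not proved it. Your sketch---use the staircase constraints $\mu_i\ge\mu_{i+1}+1$ and the negative penalty $g_k=k(k+1-n)$ to absorb $e_k$, then extract a unit of saving from the missing cell---is not an argument. In particular, the assertion that ``the missing cell drops $e_k$ by at least one at some $k$ for which $f_k$ was positive'' is not the right quantitative claim: removing one cell increases $-\#U_1$ by $1$, so to make progress one needs $\sum_k \max(f_k,0)$ to drop by at least $2$, and whether this happens depends on which cell is removed and on whether the affected $f_k$ were positive to begin with. For $|p-q|=1$ one has $E(U_0):=-\#U_0+\sum_k\max(f_k(U_0),0)=0$ exactly, so the saving when passing to a maximal proper staircase is genuinely delicate (it does work, but it requires checking). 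Your final paragraph effectively concedes all of this by naming the inequality ``the main obstacle'' requiring ``careful bookkeeping across the three column regimes''. At that level of detail the proposal is a strategy, not a proof; the paper's row-peeling induction with its case-by-case evaluation of $J_m$ is one concrete way to carry out precisely this bookkeeping.
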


In the following example, we estimate $I(\E_1, X)$ in the case $n=5, m=2$ as it serves to illustrate Lemma \ref{cusp cutting lemma} and will be a base case in the induction argument of our proof of the Lemma.

\begin{exmp}[Base case of cusp cutting induction for $|p-q|=1$] \label{cusp example}
We now do the case $n=5$, $m=2$ before cutting off the cusp in the general case. We see that torus elements $t$ act as follows 
\[t\cdot v = \begin{pmatrix}
t_1^{-2} & t_1^{-1} t_2^{-1} & t_1^{-1} & t_1^{-1} t_2 & 1 \\
t_2^{-1} t_1^{-1} & t_2^{-2} & t_2^{-1} & 1 & t_2^{-1} t_1 \\
t_1^{-1} & t_2^{-1} & 1 & t_2 & t_1 \\
t_2 t_1^{-1} & 1 & t_2 & t_2^2 & t_2 t_1 \\
1 & t_1 t_2^{-1} & t_1 & t_1 t_2 & t_1^2 
\end{pmatrix} O(X),\]
from which it is easy to read off the weights. 
The Haar measure takes the form \[dg = du \, \frac{1}{t_1^3 t_2} d^{\times}t \, dk =  du \, \frac{1}{s_1^{3} s_2^4} d^{\times}s \, dk .\]

For any subset of $\E$ containing $b_{11}$, we now want to estimate \[I(\E_1,X) = X^{-\#\E_1}  \int_{t \in T_X} \prod_{b_{ij} \not\in \E_1} w(b_{ij}) \frac{d^{\times}s}{s_1^3 s_2^4}.\]

We only need to look at proper subsets of $\E_0 = \{b_{11},b_{12}, b_{13}, b_{22}\}$ which are left-closed and up-closed. In this case we can exclude the subset $\{b_{11},b_{12}, b_{22}\}$ by the {\bf \emph{squares}} case of the criterion for badness Theorem \ref{thm: criterion for badness}. and recall that we have the bound $s_1,s_2 < CX$. Let's compute: 
\begin{alignat*}{2}
&{I}(\{b_{11}\},X) &&= X^{-1} \int_{s_1,s_2=c}^{CX} s_1^2 s_2^2 \frac{d^{\times}s}{s_1^3 s_2^4} = X^{-1} \int_{s_1,s_2=c}^{CX} \frac{d^{\times}s}{s_1 s_2^2} = O(X^{-1}) \\
&{I}(\{b_{11},b_{12}\},X) &&= X^{-2} \int_{s_1,s_2=c}^{CX} s_1^3 s_2^4 \frac{d^{\times}s}{s_1^3 s_2^4} = X^{-2} \int_{s_1,s_2=c}^{CX} d^{\times}s = O_\epsilon(X^{-2+\epsilon}) \\
&{I}(\{b_{11},b_{12},b_{13}\},X) &&= X^{-3} \int_{s_1,s_2=c}^{CX} s_1^4 s_2^5 \frac{d^{\times}s}{s_1^3 s_2^4} = X^{-3} \int_{s_1,s_2=c}^{CX} s_1s_2 d^{\times}s = O(X^{-1})  \\
&{I}(\{b_{11},b_{12},b_{22}\},X) &&= X^{-3} \int_{s_1,s_2=c}^{CX} s_1^3 s_2^6 \frac{d^{\times}s}{s_1^3 s_2^4} = X^{-3} \int_{s_1,s_2=c}^{CX} s_2^2 d^{\times}s= O_\epsilon(X^{-1+\epsilon}).
\end{alignat*}
All of them are $O_\epsilon(X^{-1+\epsilon})$, and we have thus proven Lemma \ref{cusp cutting lemma} in this case. 
\end{exmp}

\begin{proof}[Proof of Lemma \ref{cusp cutting lemma}] \footnote{
The proof of Lemma \ref{cusp cutting lemma} is inspired by the induction argument of \cite[\S 4.2]{MR3719247}. It gives much better bounds for $I(\E_1, X)$ than the ones recorded in the statement of Lemma \ref{cusp cutting lemma}, which are sufficient for our purpose.}

By the preceding discussion, we can assume that $\E_{1}$ is left-closed and up-closed. We prove Lemma \ref{cusp cutting lemma} via a combinatorial argument using induction on $m$. Recall that $n=2m+|p-q|$. The cases $|p-q|=1$ and $|p-q| > 1$ are slightly different. We handle them separately. 

To start, let us assume that $|p-q| > 1$. First, we compute $I(\E_0, X)$. 
\begin{align*}
I(\E_0,X) &= X^{-\#\E_0} \int_{t \in T_X} \prod_{b_{ij} \in \E_0} w(b_{ij})^{-1} \prod_{i=1}^{m} s_i^{i(i+1-n)} d^{\times}s. \\ 
&= X^{-m(n-(m+1))} \int_{t \in T_X} \left(t_1^{n-2+1} t_2^{n-4+2} t_3^{n-6+2} \cdots t_m^{n-2m+2} \right)   \prod_{i=1}^{m} t_i^{2i-n} d^{\times}t \\
&= X^{-m(n-(m+1))} \int_{t \in T_X} t_1t_2^2\cdots t_m^2 d^{\times}t \\
&= X^{-m(n-(m+1))} \int_{s_1,\ldots,s_m=c}^{CX} s_1 s_2^3 s_3^5 \cdots s_m^{2m-1} d^{\times}s \\
&= O\left(X^{-m(n-(m+1))+m^2}\right) \\
&= O\left(X^{m(2m+1-n)}\right) \\
&= O\left(X^{-m(|p-q|-1)}\right).
\end{align*}
We also compute $I(\emptyset,X)$ directly
\begin{equation*}
 I(\emptyset,X) =\int_{s_1, \ldots, s_n = c}^{CX} \prod_{i=1}^{m} s_i^{i(i+1-n)} d^{\times}s = O(1).
\end{equation*}
Let $\E_1'$ denote $\E_0 \setminus \E_1$. Define $I_m'(\E_1',X) :=I(\E_1,X)$. Then we have: 
\begin{align*}
I_m'(\E_1',X) &= X^{\#\E_1'-m(n-(m+1))} \int_{t \in T_X} \left( \prod_{b_{ij} \in \E_1'} w(b_{ij}) \right) t_1t_2^2\cdots t_m^2 d^{\times}t. \\
&= X^{\#\E_1'-m(n-(m+1))} \int_{s_1,\ldots,s_m = c}^{XC} \left( \prod_{b_{ij} \in \E_1'} w(b_{ij}) \right) s_1 s_2^3 \cdots s_m^{2m-1} d^{\times}s. 
\end{align*}

We have now worked out the base case of the induction. When $m = 1$, we have 
\begin{align*}
I_1(\emptyset,X) &= O\left(1\right)  \\
I_1(\{b_{11}\},X) &= X^{-1} \int_{s_1=c}^{XC} s_1^2 s_1^{2-n} d^{\times}s = O_\epsilon \left(X^{-1+\epsilon} \right) \\
I_1(\{b_{11}, \ldots, b_{1k} \},X) &= X^{-k} \int_{s_1=c}^{XC} s_1^2s_1^{k-1} s_1^{2-n} d^{\times}s = O_\epsilon\left( X^{-1+\epsilon} \right) \\
I_1(\{b_{11}, \ldots, b_{1\,n-2} \},X) &= X^{-n+2} \int_{s_1=c}^{XC} s_1 d^{\times}s = O_\epsilon\left( X^{1-|p-q|+\epsilon} \right) \\
I_1(\E_0,X) &= O_\epsilon\left(X^{1-|p-q|+\epsilon}\right).
\end{align*}
In particular, we see that when $|p-q| > 1$, all these quantities are $O_\epsilon(X^{-1+\epsilon})$. We will use this estimate in the induction step. Now, suppose that $m \ge 2$. For any decomposition $k=k_1+k_2$ we have: \[\int_c^{CX} s^k d^{\times}s \ll_{c,C} \int_{c}^{CX} s^{k_1} d^{\times}s  \int_{c}^{CX} s^{k_2} d^{\times}s.\] Consequently, we see that $I_m'(\E_1',X)$ is bounded by the product \[I_m'(\E_1',X) \le J_m(\E_2',X) \, K_m(\E_3',X),\] where $\E_2'$ consist of all the elements of $\E_1'$ in the first row, $\E_3'$ consists of the rest of the elements of $\E_1'$, and
\begin{alignat*}{2}
&J_m(\E_2',X) &&= \left(X^{\#\E_2'-(n-2)} \int_{s_1, \ldots, s_n = c}^{CX} \left( \prod_{b_{1j} \in \E_2'} w(b_{1j}) \right) s_1 s_2^2 \cdots s_m^2 d^{\times}s \right) \\
&K_m(\E_3',X) &&= \left(X^{\#\E_3'-\#\E_0+(n-2)} \int_{s_2, \ldots, s_n = c}^{CX} \left(\prod_{b_{ij} \in \E_3'} w(b_{ij}) \right) s_2 s_3^3 \cdots s_m^{2m-3} d^{\times}s \right). 
\end{alignat*}
Note that $K_m(\E_3',X) = I_{m-1}(\E_3',X)$ and we can estimate it by induction. Now, $\E_1$ is left-closed and non-empty and hence the subset $\E_2'$ is either empty or of the form $\{b_{1\,k}, \ldots, b_{1\,n-2}\}$ for $k \ge 2$. 
Now, if $\E_2' =\emptyset$: 
\begin{equation*}
J_m(\E_2',X) = X^{2-n} \int_{s_1, \ldots, s_n = c}^{CX} s_1 s_2^2 \cdots s_m^2 d^{\times}s = O_\epsilon\left(X^{2m-1-n+2}\right) = O_\epsilon\left(X^{1-|p-q|+\epsilon} \right) = O_\epsilon\left(X^{-1+\epsilon}\right).
\end{equation*} 
Now, if $k=2$, then: 
\begin{align*}
J_m(\E_2',X) &= X^{-1} \int_{s_1, \ldots, s_n = c}^{CX}  t_1^{3-n} t_2^{-1} s_1 s_2^2 \cdots s_m^2 d^{\times}s \\
&= X^{-1} \int_{s_1, \ldots, s_n = c}^{CX}  s_1^{3-n}s_2^{3-n}\cdots s_m^{3-n} s_2^{-1} \cdots s_m^{-1} s_1 s_2^2 \cdots s_m^2 d^{\times}s \\
&= X^{-1} \int_{s_1, \ldots, s_n = c}^{CX}  s_1^{3-n}s_2^{3-n}\cdots s_m^{3-n} s_1 s_2 \cdots s_m d^{\times}s \\
&= X^{-1} \int_{s_1, \ldots, s_n = c}^{CX}  s_1^{4-n}s_2^{4-n}\cdots s_m^{4-n} d^{\times}s \\
&= O_\epsilon (X^{-1+\epsilon})
\end{align*} 
Now, if $k=3$, then: 
\begin{align*}
J_m(\E_2',X) &= X^{-2} \int_{s_1, \ldots, s_n = c}^{CX}  t_1^{4-n} s_1 s_2^2 \cdots s_m^2 d^{\times}s \\
&= X^{-2} \int_{s_1, \ldots, s_n = c}^{CX}  s_1^{4-n} s_2^{4-n} \cdots s_m^{4-n} s_1 s_2^2 \cdots s_m^2 d^{\times}s \\
&= X^{-2} \int_{s_1, \ldots, s_n = c}^{CX}  s_1^{5-n} s_2^{6-n} \cdots s_m^{6-n} d^{\times}s \\
&= O_\epsilon \left(X^{-2+\epsilon}\right).
\end{align*} 
If $4 \le k \le m$, then: 
\begin{align*}
J_m(\E_2',X) &= X^{1-k} \int_{s_1, \ldots, s_n = c}^{CX} t_1^{-((n-2)-k+1)} t_k^{-1} \cdots t_m^{-1} t_m \cdots t_3 s_1 s_2^2 \cdots s_m^2 d^{\times}s \\
&= X^{1-k} \int_{s_1, \ldots, s_n = c}^{CX}  t_1^{-((n-2)-k+1)} t_3 \cdots t_{k-1} s_1 s_2^2 \cdots s_m^2 d^{\times}s \\
&= X^{1-k}  \int_{s_1, \ldots, s_n = c}^{CX}  t_1^{-((n-2)-k+1)} s_3 s_4^2 \cdots s_{k-1}^{k-3} s_1 s_2^2 \cdots s_m^2 d^{\times}s \\
&= X^{1-k}  \int_{s_1, \ldots, s_n = c}^{CX}  t_1^{-((n-2)-k+1)} s_1 s_2^2 s_3^3 s_4^4 s_{k-1}^{k-1} s_k^2 \cdots s_m^2 d^{\times}s \\
&= O_\epsilon(X^{1-k+\epsilon}).
\end{align*} 
If $m+1 \le k < m+|p-q|$, then: 
\begin{align*}
J_m(\E_2',X) &= X^{1-k} \int_{s_1, \ldots, s_n = c}^{CX} t_1^{-((n-2)-k+1)} t_m \cdots t_3 s_1 s_2^2 \cdots s_m^2 d^{\times}s \\
&= X^{1-k} \int_{s_1, \ldots, s_n = c}^{CX}  t_1^{-((n-2)-k+1)} t_3 \cdots t_{m} s_1 s_2^2 \cdots s_m^2 d^{\times}s \\
&= X^{1-k}  \int_{s_1, \ldots, s_n = c}^{CX}  t_1^{-((n-2)-k+1)} s_3 s_4^2 \cdots s_m^{m-2} s_1 s_2^2 \cdots s_m^2 d^{\times}s \\
&= X^{1-k}  \int_{s_1, \ldots, s_n = c}^{CX}  t_1^{-((n-2)-k+1)} s_1 s_2^2 s_3^3 s_4^4 \cdots s_m^m d^{\times}s \\
&= O_\epsilon(X^{1-k+\epsilon}).
\end{align*} 
If $m+|p-q| \le k \le n-2$, then: 
\begin{align*}
J_m(\E_2',X) &= X^{1-k} \int_{s_1, \ldots, s_n = c}^{CX} t_1^{-((n-2)-k+1)} t_{n-2-k+3} \cdots t_3 s_1 s_2^2 \cdots s_m^2 d^{\times}s \\
&= X^{1-k} \int_{s_1, \ldots, s_n = c}^{CX}  t_1^{-((n-2)-k+1)} t_3 \cdots t_{n-k+1} s_1 s_2^2 \cdots s_m^2 d^{\times}s \\
&= X^{1-k}  \int_{s_1, \ldots, s_n = c}^{CX}  t_1^{-((n-2)-k+1)} s_3 s_4^2 \cdots s_{n-k+1}^{n-k-1} s_{n-k+2}^{n-k-1} \cdots s_m^{n-k-1} s_1 s_2^2 \cdots s_m^2 d^{\times}s \\
&= X^{1-k}  \int_{s_1, \ldots, s_n = c}^{CX}  t_1^{-((n-2)-k+1)} s_1 s_2^2 s_3^3 s_4^4 \cdots s_{n-k+1}^{n-k+1} s_{n-k+2}^{n-k+1} \cdots s_m^{n-k+1} d^{\times}s \\
&= X^{1-k}  \int_{s_1, \ldots, s_n = c}^{CX}  s_1^{-(n-k)+1} s_2^{-(n-k)+1}\cdots s_m^{-(n-k)+1} s_1 s_2^2 s_3^3 s_4^4 \cdots s_{n-k+1}^{n-k+1} s_{n-k+2}^{n-k+1} \cdots s_m^{n-k+1} d^{\times}s \\
&= X^{1-k}  \int_{s_1, \ldots, s_n = c}^{CX}  s_1^{-(n-k)+2} s_2^{-(n-k)+3} \cdots s_{n-k-1}^{0}s_{n-k}^{1} s_{n-k+1}^{2} \cdots s_m^{2} d^{\times}s \\
&= O_\epsilon(X^{k-(n-2)-|p-q|+\epsilon}).
\end{align*} 
Therefore, in all cases we find \[J_m(\E_2',X) = O_\epsilon(X^{-1+\epsilon}).\] The lemma now follows by induction on $m$ used to bound $I'_{m-1}(\E_3',X)$ by $O_\epsilon(X^{-1+\epsilon})$. 

We now explain how to deal with the case $|p-q|=1$. In this case, we will have to use the {\bf \emph{squares}} case of the criterion for badness Theorem \ref{thm: criterion for badness}. This will guarantee that $\E_1 \neq \E_0$. Thus, the base case of the induction is not $m=1$ since $\E_{0} = \{b_{11}\}$ and thus, we can start the induction at $m=2$. Example \ref{cusp example} shows that the estimates $I(\E_1,X) = O_\epsilon(X^{-1+\epsilon})$ for $\E_1 \neq \emptyset$ and $I(\emptyset, X) = O(1)$ do hold for this base case. For $m \ge 3$, the estimates obtained in the calculations above remain valid: \[J_m(\E_2',X) = O_\epsilon(X^{-1+\epsilon})\] for $\E_2' \neq \emptyset$ and $J_m(\emptyset,X) = O(1)$.

So, we have to rewrite the induction step slightly. We do so as follows. If $\E_2'$ is non-empty, then the lemma follows by induction on $m$ used to bound $I_{m-1}'(\E_3',X)$ by $O_\epsilon(X^{\epsilon})$. If, on the other hand, $\E_2'$ is empty, then $\E_3'$ must be non-empty since $\E_1'$ is non-empty. This holds because the {\bf \emph{squares}} case of the criterion for badness Theorem \ref{thm: criterion for badness} implies that $\E_1 \neq \E_0$. If $\E_3' \neq \E_0 \setminus \{b_{1 \,1},\ldots,b_{1 \,n-2}\}$, then by induction $I'_{m-1}(\E_3',X) = O_\epsilon(X^{-1+\epsilon})$. The last outstanding case is when $\E_{1} = \{b_{1\,1}, \ldots, b_{1 \, n-2} \}$. In this case, a direct computation gives the result, completing the proof.
\end{proof}

\section{Sieving to very large and acceptable collections}\label{sec: sieves}

We obtain asymptotic formulas for $N(\mathcal{V}(\Lambda_{A,b}^{\delta}); X)$ in very large families, allowing us, in particular, to restrict the count of the previous sections to projective orbits in very large families of orders in \S \ref{sec: proof of main theorem}. We also obtain upper bounds and conditional asymptotic formulas in acceptable families. 
This section is a straightforward adaptation of \cite[\S 5]{MR3782066} to our setting.

\subsection{Acceptable and very large subfamilies of $V_{A,b}^{r_{2},\delta}(\Z)$}

Fix $A \in \mathscr{L}_{\Z}$, $1 \le b < n$, and $0 \le r_{2} \le (n-1)/2$ and let $\widehat{\Z} = \prod_{p} \Z_{p}$. \\ 

A \textbf{collection of local specifications} is a subset of the form $\Lambda_{A,b}^{\delta}  = \prod_{p} \Lambda_{A,b,p} \times V_{A,b}^{r_2,\delta}(\R) \subset V_{A,b}(\widehat{\Z}) \times V_{A,b}(\R)$, for some $r_{2}$ and $\delta \in \mathcal{T}(r_{2})$ and such that each $\Lambda_{A,b,p} \subset V_{A,b,p}(\Z_{p}) \setminus \{ \Delta=0 \}$ is non-empty, clopen, with measure $0$ boundary. We denote by $\mathcal{V}(\Lambda_{A,b}^{\delta}) \subset V_{A,b}^{r_{2},\delta}(\Z)$ the preimage of $\Lambda_{A,b}$ under the diagonal embedding $V_{A,b}^{r_{2},\delta}(\Z) \hookrightarrow V_{A,b}(\widehat{\Z}) \times V_{A,b}(\R)$. We define $N(\mathcal{V}(\Lambda_{A,b}^{\delta});X)$ as the number of absolutely irreducible $\SO_{A}(\Z)$-orbits of height at most $X$ in the subfamily $\mathcal{V}(\Lambda_{A,b}^{\delta})$. \\

We say that $\Lambda_{A,b}^{\delta}$ is {\bf very large} if for all but finitely many primes the sets $\Lambda_{A,b,p}$ contains all projective elements of $V_{A,b}(\mathbb{Z}_p)$ (i.e. with the property that $I^2 = (\delta)$ under the parametrisation \ref{integral parametrisation}). If $\Lambda_{A,b}^{\delta}$ is very large, we also say that the associated subfamily $\mathcal{V}(\Lambda_{A,b}^{\delta})$ is very large. \\ 

We say that $\Lambda_{A,b}^{\delta}$ is {\bf acceptable} if for all but finitely many primes, the set $\Lambda_{A,b,p}$ contains all elements of $V_{A,b}(\mathbb{Z}_p)$ whose discriminant is not divisible by $p^2$. If $\Lambda_{A,b}^{\delta}$ is acceptable, we also say that the associated subfamily $\mathcal{V}(\Lambda_{A,b}^{\delta})$ is acceptable.

\subsection{Sieving to very large subfamilies}

Let $p$ be a prime. Let $V_{A,b}(\mathbb{Z}_p)^{\rm proj}$ denote the set of elements $v \in V_{A,b}(\mathbb{Z}_p)$ which correspond to projective pairs $(I,\delta)$ (i.e. with the property that $I^2 = (\delta)$ under the parametrisation \ref{integral parametrisation}). Note that $V_{A,b}^{r_2,{\rm proj}} (\mathbb{Z}) = V_{A,b}^{r_2}(\mathbb{Z}) \bigcap \left( \bigcap_p V_{A,b}^{\rm proj}(\mathbb{Z}_p) \right)$. Let $W_{A,b,p}$ be the set of elements in $V_{A,b}(\mathbb{Z})$ that do not belong to $V_{A,b}^{\rm proj}(\mathbb{Z}_p)$. 

We need estimates for the number of elements in $W_{A,b,p}$ for large $p$. We have the following theorem, whose statement and proof are adaptations of \cite[Theorem 5.1]{MR3782066} to our setting.

\begin{thm}
We have
\begin{equation*} 
N \left(\cup_{p \ge M} W_{A,b,p}; X \right) = O \left(\frac{X^{\frac{n(n+1)}{2}-1}}{M^{1-\epsilon}} \right)+o\left(X^{\frac{n(n+1)}{2}} \right)
\end{equation*} 
where the implied constant is independent of $X$ and $M$.
\end{thm}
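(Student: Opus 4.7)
The plan is to adapt the Ekedahl-type geometric sieve argument of \cite{HoShankarVarmaOdd} to the present setting of $\SO_A(\mathbb{Z})$-orbits on the slice $V_{A,b}^{r_2,\delta}$. The proof splits naturally into three steps.

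\textbf{Step 1: a codimension-$2$ characterisation of non-projectivity.} First I would translate the condition ``$v\in W_{A,b,p}$'' into a congruence condition modulo $p^2$. Using Wood's parametrisation together with the dictionary between ideal pairs $(I,\delta)$ and $\SL_n$-orbits, one checks that if the ideal $I$ associated with $v\in V_{A,b}(\Z_p)$ fails to be invertible, then $v$ must lift a specific subvariety $Y_p\subset V_{A,b}(\Z/p^2\Z)$ whose mod-$p$ reduction has codimension at least $2$ in $V_{A,b}(\F_p)$ (this is the analogue of the Bhargava--Ho--Shankar--Varma characterisation of non-maximal/non-projective pairs). In particular $\#Y_p(\F_p)=O(p^{\frac{n(n+1)}{2}-3})$ and $\#Y_p(\Z/p^2\Z)=O(p^{n(n+1)-4})$, with implied constants independent of~$p$.

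\textbf{Step 2: a uniform count per prime.} Next I would bound $N(W_{A,b,p};X)$ by counting lattice points in $\mathcal{F}_A\cdot R_{A,b}^{r_2,\delta}(X)$ whose reduction modulo $p^2$ lies in $Y_p$. In the anisotropic case the fundamental domain is bounded and Davenport's lemma applied on each residue class modulo $p^2$ gives
\[
\#\{v\in \mathcal{F}_A\cdot R_{A,b}^{r_2,\delta}(X)\cap W_{A,b,p}\}\;=\;O\!\left(\tfrac{X^{\frac{n(n+1)}{2}-1}}{p^{2}}\right)+O\!\left(X^{\frac{n(n+1)}{2}-2}\cdot p\right),
\]
where the first term is the expected density contribution and the second comes from boundary residue classes. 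In the isotropic case, the cusp-cutting machinery of Section \ref{Averaging and cutting off the cusp} shows that cuspidal contributions to $W_{A,b,p}$ are already absorbed into the smaller error term $o(X^{\frac{n(n+1)}{2}-1})$, so the same estimate is obtained on the main body by an averaging argument over $G_0$.

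\textbf{Step 3: sum over $p\ge M$.} Summing the previous estimate,
\[
\sum_{p\ge M}\frac{X^{\frac{n(n+1)}{2}-1}}{p^{2-\epsilon}}\;\ll\;\frac{X^{\frac{n(n+1)}{2}-1}}{M^{1-\epsilon}},
\]
which gives the main term of the theorem. The boundary contributions $O(X^{\frac{n(n+1)}{2}-2}\cdot p)$ cannot be summed uniformly over all primes, but they are dominated by the cuspidal/reducible error bounds established in Section \ref{Averaging and cutting off the cusp}, so they contribute the final $o$-term.

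\textbf{Main obstacle.} The principal difficulty is establishing the codimension-$2$ claim for $Y_p$ \emph{uniformly} in $p$, and in particular making sure that the implied constant does not degenerate at primes dividing $\det(A)$ or at primes where the anisotropic part of $A$ behaves specially. I expect to handle this by excluding a finite set of ``bad'' primes (depending on $A_0\in\mathscr{L}_\Z$) and absorbing their contribution into the $o$-term of the statement, since only primes $p\ge M$ with $M$ arbitrarily large are relevant. A secondary technical point is that the fundamental domain $\mathcal{F}_A\cdot R_{A,b}^{r_2,\delta}$ is built out of $\SO_A(\Z)$, not $\SL_n(\Z)$, so one must verify that Davenport's lemma applies to each fibre of the mod-$p^2$ projection without picking up an additional factor growing in $p$; this is essentially immediate because $\SO_A$-invariance of $Y_p$ is inherited from its $\SL_n$-invariance.
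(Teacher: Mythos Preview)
Your plan follows the same Ekedahl-type geometric sieve as the paper, but you have missed the key simplification and introduced an unnecessary complication.

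The paper's proof observes, via \emph{Nakayama's lemma}, that non-projectivity of the pair $(I,\delta)$ over $\Z_p$ is already detected modulo $p$: the set $W_{A,b,p}$ is the full preimage under reduction mod $p$ of a subset of $V_{A,b}(\F_p)$. There is therefore no need to work modulo $p^2$; your subvariety $Y_p$ should live directly in $V_{A,b}(\F_p)$. This makes the Davenport estimate cleaner and removes the boundary term $O(X^{\frac{n(n+1)}{2}-2}\cdot p)$ that you yourself flagged as problematic to sum. The paper's justification for codimension $\ge 2$ is also more concrete than yours: being non-projective modulo $p$ is one condition, and having $p\mid\Delta$ is a second independent condition, so together they cut out codimension at least $2$.

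One further remark: your ``main obstacle'' about primes dividing $\det(A)$ is vacuous in this setting, since every $A\in\mathscr{L}_\Z$ satisfies $\det(A)=(-1)^{(n-1)/2}\in\{\pm1\}$ by construction. There are no bad primes coming from $A$.
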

\begin{proof} One shows just as in \cite[Theorem 5.1]{MR3782066} that $W_{A,b,p} \subset V(\mathbb{Z}_p)$ is the preimage of some subset of $V_{A,b}(\mathbb{F}_p)$ under the reduction modulo $p$ map by using Nakayama's lemma. Making the necessary adjustments, the proof proceeds just as in \textit{loc. cit.}, noting that the reduction modulo $p$ of $W_{A,b,p}$ has codimension greater than $2$ in $V_{A,b}(\mathbb{F}_p)$ (being non-projective modulo $p$ and having discriminant divisible by $p$ give at least $2$ conditions).\end{proof}

\begin{thm}
Let $r_2$ be an integer such that $0 \le r_2 \le \frac{n-1}{2}$ and let $\delta \in \mathcal{T}(r_2)$. Then for a very large collection of local specifications $\Lambda_{A,b}^{\delta}$, we have 
\begin{equation*} 
N(\mathcal{V}(\Lambda_{A,b}^\delta), X) = \frac{1}{\sigma(r_2)} \Vol(\mathcal{F}_A \cdot R_{A,b}^{r_2,\delta}(X)) \prod \limits_p \Vol(\Lambda_{A,b,p}) + o\left(X^{\frac{n(n-1)}{2}-1}\right),
\end{equation*}
where the volumes of subsets of $V_{A,b}(\mathbb{R})$ are computed with respect to the Euclidean measure normalised so that $V_{A,b}(\mathbb{Z})$ has covolume $1$. The volumes of subsets of $V_{A,b}(\mathbb{Z}_p)$ are computed with respect to the Euclidean measure normalised so that $V_{A,b}(\mathbb{Z}_p)$ has measure $1$. 
\end{thm}

\subsection{Sieving to acceptable subfamilies conditional on a tail estimate} 

Let $p$ be a prime. We denote by $\mathcal{W}_{A,b,p}$ the set of elements $v \in V_{A,b}(\mathbb{Z})$ such that $p^2 \mid \Delta(v)$. 
We make the following conjecture on tail estimates. They are known for $n=3$ by a proof akin to the proof of \cite[Proposition 4.18]{BhargavaHankeShankar} and unknown for $n \ge 5$.

\begin{conj}[Conjectural tail estimates] \label{conj: conjectural tail estimate}

We have
\begin{equation*}
N(\cup_{p \ge M} \mathcal{W}_{A,b,p}; X) = O \left(\frac{X^{\frac{n(n+1)}{2}-1}}{M^{1-\epsilon}}\right) + o \left( X^{\frac{n(n+1)}{2}-1} \right)
\end{equation*}
where the implied constant is independent of $X$ and $M$.
\end{conj}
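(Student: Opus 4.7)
The plan is to prove the tail estimate by partitioning the primes $p \ge M$ into a \emph{moderate} range and a \emph{large} range, handled by distinct tools. Via the resolvent map $\pi \colon V_{A,b} \to U_b$, the condition $p^2 \mid \Delta(v)$ forces $p^2 \mid \disc(\pi(v))$; since the height on $V_{A,b}$ is by definition the height of the resolvent polynomial, the count $N(\cup_{p \ge M} \mathcal{W}_{A,b,p},X)$ is controlled by counting $f \in U_b(\Z)$ of height at most $X$ with $p^2 \mid \disc(f)$ for some $p \ge M$, weighted by the number of $\SO_A(\Z)$-orbits in $\pi^{-1}(f)$. For absolutely irreducible points in a given fibre, this number is bounded by $|H(\mathcal{O}_f)|$, which in aggregate will contribute only tame factors.

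For the moderate range $M \le p \le X^{\alpha}$ (with a small parameter $\alpha > 0$), I would apply a uniform Ekedahl-type sieve in the spirit of \cite{BhargavaShankarWangSquarefreeI,HoShankarVarmaOdd}. The reduction of $\mathcal{W}_{A,b,p}$ modulo $p^2$ cuts out a subscheme of $V_{A,b}(\Z/p^2 \Z)$ of codimension at least two: modulo $p$ one requires $\disc$ to vanish (one condition), and the lift to $p^2$ imposes a further condition away from a thin locus where the mod-$p$ discriminant vanishes to higher order. Applying Davenport's lemma to the cover $\mathcal{F}_A \cdot R_{A,b}^{r_2,\delta}(X)$ constructed in Section \ref{Reduction Theory}, uniformly in $p$, then yields
\[
N(\mathcal{W}_{A,b,p}, X) \ll \frac{X^{\frac{n(n+1)}{2}-1}}{p^{2-\epsilon}} + \text{lower order},
\]
and summing over $M \le p \le X^{\alpha}$ produces $O\bigl(X^{\frac{n(n+1)}{2}-1}/M^{1-\epsilon}\bigr)$, which is the first term of the claimed estimate.

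For the large range $p \ge X^{\alpha}$, the contribution is controlled by the number of $f \in U_b(\Z)$ of height at most $X$ whose discriminant admits a squareful divisor of size $\ge X^{2\alpha}$. I would appeal to Granville's consequence of the $abc$ conjecture, applied either to the discriminant viewed as a polynomial in the coefficients of $f$, or equivalently to the homogenised binary form $f(x,y)$. This bounds the count of such $f$ by $o(X^{\frac{n(n+1)}{2}-1})$, and multiplying by the per-fibre orbit count, whose total contribution over this thin set is absorbed, supplies the $o(X^{\frac{n(n+1)}{2}-1})$ second term.

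The main obstacle, and precisely the reason the statement is only conjectural for $n \ge 5$, is the large-prime range: without the $abc$ conjecture there is no unconditional control on monic integer polynomials of degree $n \ge 5$ whose discriminants have very large squareful factors. In the cubic case $n = 3$, one can bypass $abc$ altogether by invoking the unconditional tail estimate of Belabas--Bhargava--Pomerance (or equivalently the uniformity estimate in the appendix of \cite{BhargavaCohenLenstra}), which makes the conjecture a theorem for $n = 3$, in agreement with the remark preceding its statement.
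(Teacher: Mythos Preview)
The statement is labelled a \emph{Conjecture} in the paper and is not given a proof there; the paper only remarks that the estimate is known for $n=3$ and follows, for general odd $n$, from a suitable form of the $abc$ conjecture via Granville's work. Your proposal accurately reconstructs exactly this picture: the moderate-range primes are handled unconditionally by an Ekedahl-type sieve together with Davenport's lemma (mirroring the unconditional argument the paper does give for the non-projectivity locus $W_{A,b,p}$), while the large-prime range is precisely where Granville's $abc$-based squarefree-discriminant result is invoked, and you correctly flag that this is the conditional step for $n\ge 5$ and that for $n=3$ one has unconditional substitutes. There is therefore no proof in the paper to compare against, and your sketch matches both the content and the limitations the paper records.

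One small refinement worth making explicit in your write-up: in the moderate range you can work directly on $V_{A,b}$ with geometry of numbers (as you in fact do), so the earlier fibrewise reduction via $|H(\mathcal{O}_f)|$ is not needed there; reserve the fibrewise viewpoint for the large-prime range, where the $O_\epsilon(\Delta^\epsilon)$ bound on the $2$-torsion absorbs into the $o(X^{\frac{n(n+1)}{2}-1})$ term coming from Granville.
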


We have the following asymptotic formula, conditional on the preceding tail estimates. 

\begin{thm} \label{first asymptotic formula}
Let $r_2$ be an integer such that $0 \le r_2 \le \frac{n-1}{2}$ and let $\delta \in \mathcal{T}(r_2)$. Then for an acceptable collection of local specifications $\Lambda_{A,b}^{\delta}$, we have 
\begin{equation*}
N(\mathcal{V}(\Lambda_{A,b}^\delta); X) \le \frac{1}{\sigma(r_2)} \Vol(\mathcal{F}_A \cdot R_{A,b}^{r_2,\delta}(X)) \prod \limits_p \Vol(\Lambda_{A,b,p}) + o\left(X^{\frac{n(n-1)}{2}-1}\right),
\end{equation*}
where the volumes of subsets of $V_{A,b}(\mathbb{R})$ are computed with respect to the Euclidean measure normalised so that $V_{A,b}(\mathbb{Z})$ has covolume $1$. The volumes of subsets of $V_{A,b}(\mathbb{Z}_p)$ are computed with respect to the Euclidean measure normalised so that $V_{A,b}(\mathbb{Z}_p)$ has measure $1$. This is an equality if we assume that the tail estimates of Conjecture \ref{conj: conjectural tail estimate} hold. 
\end{thm}

\section{Local volumes and local masses}\label{sec: product of local volumes} 

In this section, we describe a change of variable formula. It will help us reduce the computation of volumes and multisets in $V_{A,b}^{r_2,\delta}(\R)$ and $V_{A,b,p}(\Z_{p})$ to calculations of integrals of local masses over $U_{1,b}^{r_2}(\R)$ and $U_{1,b}(\Z_p)$. However, first, we choose a volume form over $\Q$ for $\SO_{A}$ which will be helpful in comparing Jacobian change of variables across the different $A \in \mathscr{L}_{\Z}$. Such comparisons will be helpful in our study $2$-adic local masses in \S \ref{sec: 2-adic distribution}.  

\subsection{Choice of volume forms} \label{subsec: choice of volume forms}

Let $\omega_{V}$ and $\omega_{U}$ denote the Euclidean measures on $V_{A,b}$ and $U_{A,b}$ respectively, normalised so that $V_{A,b}(\mathbb{Z})$ and $U_{A,b}(\mathbb{Z})$ have covolume $1$. We will write $\omega_{V,b}$ and $\omega_{U,b}$ when we need to make the dependence on $b$ explicit. 

We specify our choice of measure for $\SO_{A}$. We choose $\omega_{A}$ to be the algebraic differential defined over $\Q$ generating the rank $1$ module of top-degree left-invariant differential forms on $\SO_{A}$ constructed by Tamagawa \cite[\S 7, \S 8]{MR0212025} or Hanke \cite[\S 2]{MR2155083}.

We recall the construction of this form. Let $X$ be the space of $n\times n$ matrices and $T$ be the space of $n \times n$ symmetric matrices. Write $(x_{ij})$ for $1 \le i,j \le n$ and $(t_{ij})$ for $1 \le i \le j \le n$ for the standard coordinates on $X$ and $T$. Define $\omega_{X} = \bigwedge_{i,j} dx_{ij}$ and $\omega_T =  \bigwedge_{i \le j} dt_{ij}$. Consider the map: 
\[X \xrightarrow{\quad \mathscr{F}_{A} \quad} T ,\] defined by $x \mapsto x^{t}Ax$. Let $\iota_{\SO_{A}} \colon \SO_{A} \hookrightarrow X$ denote the injection of $\SO_{A}$ into $X$. Then, there exists a differential form $\overline{\omega}_{A}$ defined over $\Q$ such that \[\omega_{X} = \overline{\omega}_{A} \wedge \mathscr{F}_{A}^{*} \omega_{T}.\] 

We define $\omega_{A} = \iota_{\SO_{A}}^{*} \overline{\omega}_A$. Then by Tamagawa \cite[\S 7, \S 8]{MR0212025} and Hanke \cite[\S 2]{MR2155083}, $\omega_{A}$ does not depend on the choice of $\overline{\omega}_A$, and $\omega_{A}$ is a non-zero top degree left-invariant algebraic differential form on $\SO_{A}$ defined over $\Q$ with the property that $\Vol(\SO_{A}(\Z_{p}))$ computed with respect to $\omega_{A}$ is equal to the local representation density at $p$ of $A$ in the Smith--Minkowski--Siegel mass formula. 

The naturality of the construction of $\omega_{A}$ and the fact that $\Vol(\SO_{A}(\Z_{p}))$ computed with respect to $\omega_{A}$ is equal to the local representation density at $p$ of $A$ in the Smith--Minkowski--Siegel mass formula allows to carry out comparison arguments for integrals on $V_{A,b}(\mathbb{Z}_{p})$ over the various $A \in \mathscr{L}_{\Z}$. Such arguments will be useful in \S \ref{sec: 2-adic distribution} at $p=2$.

\subsection{The change of measure formula}

To compute the volumes of sets and multisets in $V_{A,b}(\mathbb{R})$ and $V_{A,b}(\mathbb{Z}_p)$, we have the following change of variable formula.

\begin{prop}[Change of measure formula]\label{change of measure formula}
Let $K =\mathbb{Z}_p$, $\mathbb{R}$ or $ \mathbb{C}$. Let $| \cdot|$ denote the usual absolute value on $K$ and let $s \colon U_{1,b}(K) \rightarrow V_{A,b}(K)$ be a continuous map such that $\pi(s(f)) = f $ for each $f \in U_{1,b}$, i.e. a continuous section of $\pi$. Then there exists a rational non-zero constant $\mathcal{J}_A$, independent of $K$, $s$, and $b$, such that for any measurable function $\phi$ on $V_{A,b}(K)$, we have: 
\begin{align} \label{eqn: change of measure 1}
&\int \limits_{\SO_A(K) \cdot s(U_{1,b}(K))} \phi(v) \, \omega_{V} = \left| \mathcal{J}_A \right| \int_{f\in U_{1,b}(K)} \int_{g \in \SO_A(K)} \phi(g \cdot s(f)) \, \omega_{A}(g) \, \omega_{U}(f) \\
\label{eqn: change of measure 2}
&\int \limits_{V_{A,b}(K)} \phi(v) \omega_{V} = \left| \mathcal{J}_A \right| \int\limits_{\substack{f \in U_{1,b}(K) \\ \Delta(f) \neq 0}} \left(\sum_{v\in \frac{V_{A,b}(K) \cap \pi^{-1}(f)}{\SO_A(K)}} \frac{1}{\# {\rm Stab}_{\SO_A(\mathbb{Z}_p)}(v)} \int\limits_{g \in \SO_A(K)} \phi(g \cdot v) \, \omega_{A}(g) \right) \, \omega_{U}(f)
\end{align}
where $\frac{V_{A,b}(K) \cap \pi^{-1}(f)}{\SO_A(K)}$ denotes a set of representatives for the action of $\SO_A(\mathbb{Z}_p)$ on $V_{A,b}(\mathbb{Z}_p) \cap \pi^{-1}(f)$. 
\end{prop}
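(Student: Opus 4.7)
The plan is to reduce the identity to a computation of a single algebraic Jacobian, and then to pass from the orbit-sum form (second identity) to the section form (first identity) by partitioning $V_{A,b}(K)$ into $\SO_A(K)$-orbits over the fibres of $\pi$. The first thing to observe is that the dimensions match: $V_{A,b}$ is an affine space of dimension $\frac{n(n+1)}{2}-1$, while $\dim U_{1,b}=n-1$ and $\dim \SO_A = \frac{n(n-1)}{2}$, whose sum is again $\frac{n(n+1)}{2}-1$. Hence the natural map
\[
\mu \colon \SO_A \times U_{1,b} \longrightarrow V_{A,b}, \qquad (g,f) \longmapsto g\cdot s(f),
\]
is generically étale onto its image: the fibres of $\pi$ are generically $\SO_A$-orbits (this is the content of Wood's parametrisation over a field), and $s$ is a section to $\pi$, so $\mu$ has bijective differential at a dense open set of points.

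Next I would show that the Jacobian of $\mu$ with respect to the three chosen top-degree forms $dv$, $\omega$, and $df$ is a constant independent of $(g,f)$ and of the choice of section $s$. Invariance in the $g$-variable is automatic: the form $\omega$ is left $\SO_A$-invariant by construction, and $dv$ is $\SO_A$-invariant because $\SO_A \subset \SL_n$ acts on $V_{A,b}$ by a representation whose determinant is $\pm 1$ (this uses that the action preserves $A$ and $\det(Ax-B)$). Therefore the Jacobian depends only on $f$. Invariance in the $f$-variable follows from an algebraic argument: the Jacobian is a rational function on $U_{1,b}$ that is regular and nonvanishing on the non-degenerate locus; moreover, scaling the coefficients of $f$ by a cocharacter of the Levi acting naturally on $V_{A,b}$ rescales $dv$, $df$, and $\omega$ by matching weights, forcing the Jacobian to be a constant $\mathcal{J}_A \in \Q^\times$. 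Independence from $s$ then follows because any two sections differ on a fibre by the action of an element of $\SO_A$, under which everything transforms compatibly.

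With $\mathcal{J}_A$ in hand, the first identity is the change-of-variable formula for the diffeomorphism $\mu$ restricted to the smooth locus, the contribution of the closed locus $\{\Delta=0\}$ being a set of measure zero. For the second identity, partition
\[
V_{A,b}(K) \smallsetminus \{\Delta = 0\} \;=\; \bigsqcup_{f\in U_{1,b}(K),\,\Delta(f)\neq 0}\; \pi^{-1}(f),
\]
decompose each fibre into its finitely many $\SO_A(K)$-orbits, choose a representative $v$ for each, and apply the first identity with $s$ a local section through $v$. The map $g\mapsto g\cdot v$ is a $\# \Stab_{\SO_A(K)}(v)$-to-one cover of the orbit of $v$, giving the stabiliser factor $1/\#\Stab_{\SO_A(K)}(v)$ in the displayed formula.

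The main obstacle is the constancy of the Jacobian in $f$: to make the scaling argument rigorous one needs a torus (or at least a $\Gm$-action) acting compatibly on $V_{A,b}$, $U_{1,b}$, and $\SO_A$, and one must check that the induced weights on the top forms cancel. In the isotropic case this is straightforward using the torus of Section \ref{Averaging and cutting off the cusp}, and in the anisotropic case one can base-change to $\ol\Q$ to inherit the same rational Jacobian. The rationality of $\mathcal{J}_A$ then falls out of the observation that $\mu$ is defined over $\Q$ and $\mathcal{J}_A$ can be evaluated at any $\Q$-point $f$ with $\Delta(f)\neq 0$.
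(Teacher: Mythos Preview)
The paper does not prove this proposition; it is stated without proof, with the argument implicitly deferred to Bhargava--Shankar \cite{BhargavaShankarBinaryQuartic}. The only hint of method appears later, in the proof that $\mathcal{J}_{\mathfrak{M}_1}=\mathcal{J}_{\mathfrak{M}_{-1}}$, where the paper sketches the Bhargava--Shankar strategy: establish the identity first over $\mathbb{C}$ for some nonzero rational $\mathcal{J}_A$, then invoke the \emph{principle of permanence of identities} to transport the same constant to $\mathbb{R}$ and $\mathbb{Z}_p$.

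Your route is aligned with this in spirit but is more explicit about why the Jacobian is constant: you argue it is $\SO_A$-invariant (hence a function of $f$ alone), regular and nonvanishing on $\{\Delta\neq 0\}$, and weighted-homogeneous of degree zero under the natural $\Gm$-scaling $a_i\mapsto\lambda^i a_i$, hence constant. This is essentially the content of the Bhargava--Shankar argument once unpacked. Two places deserve tightening. First, ``matching weights'' alone only yields weighted-degree zero; you need the further (easy) observation that a regular unit on the complement of $\{\Delta=0\}$ in affine space with positive weights is of the form $c\cdot\Delta^k$, and then positivity of the weight of $\Delta$ forces $k=0$. Second, your argument as written runs separately for each $K$ and does not automatically give the \emph{same} constant across $K=\mathbb{Z}_p,\mathbb{R},\mathbb{C}$; your closing remark about evaluating at a $\mathbb{Q}$-point is the right fix, but the paper's ``permanence of identities'' packaging handles this more cleanly by treating the change-of-variables as an algebraic identity over $\mathbb{Q}$ that then specialises uniformly.
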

\begin{proof}
For each $b$, \cite[Remark 3.14]{MR3272925} gives the existence of a non-zero rational constant $\mathcal{J}_{A,b} \in \Q$ independent of $K$ and $s$ and verifying \eqref{eqn: change of measure 1} and \eqref{eqn: change of measure 2} (when substituted in lieu of the $\mathcal{J_A}$ there). To prove that $\mathcal{J}_{A,b}$ is independent of $b$, we recall from the proof of Proposition \cite[Proposition 3.10]{MR3272925} that $\mathcal{J}_{A,b}$ can be realised as the Jacobian change of variables of the map 
\begin{align*}
\psi_{s,b}^A \colon \SO_A(\C) \times U_{1,b}(\C) &\rightarrow V_{A,b}(\mathbb{C}) \\
(\gamma,f) &\mapsto \gamma \cdot s(f),
\end{align*}
\textit{for any} locally analytic section $s \colon U_{1,b}(\mathbb{C}) \rightarrow V_{A,b}(\mathbb{C})$ of $\pi$. Now, $s'(f(x)) := s(f(x+b))-bA$ defines a locally analytic section $s' \colon U_{1,0}(\C) \rightarrow V_{A,0}(\C)$ of $\pi$ on the $0$ slice. In particular, $\psi_{s',0}^A$ and $\psi_{s,b}^A$ have the same Jacobian change of variables since translations do not alter the Jacobian change of variables. We conclude that $\mathcal{J}_{A,b} = \mathcal{J}_{A,0}$ for all $b$ and thus that $\mathcal{J}_{A,b}$ is independent of $b$, as desired.

\end{proof}

\subsection{Local masses} 

We introduce local masses to simplify the second integral in Proposition \ref{change of measure formula}.

\begin{defn}\label{local mass formula} Let $p$ be a prime, $f \in U_{1,b}(\mathbb{Z}_p)$ and $A \in \mathscr{L}_{\mathbb{Z}}$. We define the local mass of $f$ at $p$ in $A$, $m_p(f,A)$ to be \[m_p(f,A) := \sum_{v \in \frac{V_{A,b}(\mathbb{Z}_p) \cap \pi^{-1}(f)}{\SO_A(\mathbb{Z}_p)}} \frac{1}{\# {\rm Stab}_{\SO_A(\mathbb{Z}_p)}(v)}.\]
\end{defn}

We also define an archimedean analogue of $m_{p}(A)$, which will be helpful in \S \ref{sec: proof of main theorem}. 

\begin{defn}[The archimedean mass]
Let $A \in \mathscr{L}_\mathbb{Z}$ and $r_2$ be an integer such that $0 \le r_2 \le \frac{n-1}{2}$. The archimedean mass of $A$ with respect to $r_2$ is defined to be $m_\infty(r_2,A) = \sum_{\delta \in \mathcal{T}(r_2)} \chi_A(\delta)$. 
\end{defn}
\noindent In other words, $m_\infty(r_2,A)$ counts the number of $\SO_A(\R)$ orbits in $V_{A,b}^{r_2}(\R)$.

Note that we have $m_p(f,g_p^t A g_p) = m_p(f,A)$ for $g_p \in \SL_n(\Z_p)$ and $m_\infty(r_2,g_\infty^t A g_\infty) = m_\infty(r_2,A)$ and $g_\infty \in \SL_n(\mathbb{R})$. In particular, the local masses in $A$ at a place $p$ only depend on the $\SL_n(\Z_p)$ equivalence class of $A$ and if $A_1$ and $A_2$ are unimodular integral symmetric bilinear forms in the same genus, then $m_\infty(r_2, A_1) = m_\infty(r_2, A_2)$ and $m_p(f, A_1) = m_p(f, A_2)$ for all primes $p$.

\begin{rem}
In the rest of the article, we will abuse notation and variously write $m_p(f,[g_p])$ and $m_p(f,\mathcal{G})$ for $[g_p] \in \Sym_2(\Z_p)/\SL_n(\Z_p)$ or $\mathcal{G}$ a genus of an integral quadratic forms to indicate the common value of the mass at any form taken in these equivalence classes.
\end{rem}

The following proposition summarises the main properties of local masses. 
They follow directly from \cite[Example 2.17 and Lemma 6.5]{MR3782066}, and we omit the proof. 

\begin{prop}[Properties of local masses] \label{properties of local masses}

The local masses $m_p(f, A)$ and $m_\infty(r_2, A)$ have the following properties:
\begin{enumerate}[1)] 
\item The sum of $m_p(f, A)$ over a set of representatives for the unimodular\footnote{That is having determinant $\pm 1$.} orbits of the action of $\SL_n(\mathbb{Z}_p)$ on $\Sym_2(\mathbb{Z}_p^n)$ is: \[\sum \limits_{\substack{[A] \in \frac{\Sym_2({\mathbb{Z}_p}^n)}{\SL_n(\mathbb{Z}_p)} \\ \det(A) = \factor \in \Z_p^\times / \left(\Z_p^\times \right)^2}} m_p(f, A) = \begin{cases} 2^{n-1} & \text{ if } p = 2 \\
1 & \text{ if } p \neq 2
\end{cases}.\]
\item The sum of $m_\infty(r_2, A)$ over a set of representatives for unimodular orbits of the action of $\SL_n(\mathbb{R})$ on $\Sym_2(\mathbb{R}^n)$ is: \[\sum_{\substack{[A] \in \frac{\Sym_2(\mathbb{R}^n)}{\SL_n(\mathbb{R})} \\ \det(A) = \factor \in \R^\times/(\R^\times)^2}} m_\infty(r_2, A) = 2^{r_1-1}.\]
\end{enumerate}
\end{prop}

We will refer to the sums of the local masses in Proposition \ref{properties of local masses} as the total local masses. For $p \neq 2,\infty$ there is a unique $\SL_n(\Z_p)$-equivalence class in $\Sym_2(\Z_p^n)$ of determinant $\factor$. We immediately obtain the value of the local masses at each $A \in \mathscr{L}_\Z$ for all $p \neq 2,\infty$: they are the same as the total local mass in Proposition \ref{properties of local masses} 1). 

\begin{cor}[Local masses for $p \neq 2,\infty$] For $A \in \mathscr{L}_\mathbb{Z}$  and $p \neq 2,\infty$ we have \[m_p(f,A) = 1.\]
\end{cor}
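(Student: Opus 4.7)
The strategy is to combine parts (1) and (5) of the preceding lemma with the local classification of unimodular quadratic forms over $\mathbb{Z}_p$ for odd $p$. Part (1) tells us that $m_p(f,A)$ is an invariant of the $\SL_n(\mathbb{Z}_p)$-orbit of $A$, while part (5) gives a sum formula over orbits with prescribed determinant class; the plan is to show that this sum has exactly one term, so the formula reads off the desired value.

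First, I would invoke the following standard classification: for $p$ odd, any unimodular $A \in \mathrm{Sym}_n(\mathbb{Z}_p)$ is diagonalisable with unit entries by Hensel's lemma applied to an orthogonal diagonalisation over $\mathbb{F}_p$. The $\mathrm{GL}_n(\mathbb{Z}_p)$-orbit of such an $A$ is determined by the square class of $\det A$ in $\mathbb{Z}_p^\times/(\mathbb{Z}_p^\times)^2$. Moreover, because the stabiliser $\mathrm{O}_A(\mathbb{Z}_p)$ contains a reflection of determinant $-1$, any two unimodular $A, A'$ with the same $\det$-class are not merely $\mathrm{GL}_n(\mathbb{Z}_p)$-equivalent but can be arranged to be $\SL_n(\mathbb{Z}_p)$-equivalent after multiplying by a suitable reflection: concretely, if $\gamma^tA\gamma = A'$ with $\det\gamma = \pm 1$, we replace $\gamma$ by $\gamma\cdot o$ where $o\in\mathrm{O}_{A}(\mathbb{Z}_p)$ has $\det o = -1$. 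Thus, under the equivalence recorded by the sum in part (5), there is a unique representative in each square class.

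Applying part (5) to the square class of $1$, the sum $\sum m_p(f,A) = 1$ collapses to a single term, giving $m_p(f,A) = 1$ whenever $\det A\in (\mathbb{Z}_p^\times)^2$. An identical argument works in the non-square class: either redo the derivation of the sum formula with a representative $A_0$ of that class as base-point, or pull back by a diagonal scaling with a single non-square unit entry (which swaps the two square classes and preserves $m_p$ up to the invariance of part (1)). This produces the analogue $\sum m_p(f,A) = 1$ on the non-square class, again with a single term. Since every $A\in\mathscr{L}_{\mathbb{Z}}$ satisfies $\det A = (-1)^{(n-1)/2}\in\mathbb{Z}_p^\times$, its local class at $p$ falls into exactly one of the two square classes (determined by the parity of $(n-1)/2$ and the residue of $p\pmod 4$), and in either case we conclude $m_p(f,A) = 1$.

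The main point requiring care is the orbit classification step, in particular the promotion from $\mathrm{GL}_n(\mathbb{Z}_p)$-equivalence to $\SL_n(\mathbb{Z}_p)$-equivalence; the rest is just reading off the sum formula. I expect the small bookkeeping issue to be verifying the analogue of part (5) in the non-square class, but this is forced by the same derivation applied to any orbit base-point and so is routine once the square-class case is granted.
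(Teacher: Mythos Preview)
Your proposal is correct and follows exactly the route the paper intends: the corollary is immediate from parts (1) and (5) of the preceding lemma together with the classical fact that over $\mathbb{Z}_p$ for odd $p$ there is a unique $\SL_n(\mathbb{Z}_p)$-class of unimodular symmetric $n\times n$ matrices of any fixed unit determinant, so the sum in (5) collapses to a single summand. Your extra bookkeeping about the non-square determinant class is more than is strictly needed once the constraint in (5) is read as fixing the determinant $(-1)^{(n-1)/2}$ exactly (rather than only its square class), but it is harmless and your promotion from $\GL_n(\mathbb{Z}_p)$-equivalence to $\SL_n(\mathbb{Z}_p)$-equivalence via a determinant $-1$ element of $\mathrm{O}_A(\mathbb{Z}_p)$ (e.g.\ $-I$, since $n$ is odd) is the right mechanism.
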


Determining how the total local mass $2^{n-1}$ and $2^{r_{1}-1}$ at $p=2$ and $p=\infty$ distributes over the many $\SL_{n}(\Z_{2})$ and $\SL_{n}(\R)$ equivalence classes of elements in $\mathscr{L}_{\Z}$ is much more delicate and occupies the following two sections. \\

Plugging Definition \ref{local mass formula} into Proposition \ref{change of measure formula} gives us the following simple form for the local volumes appearing in Theorem \ref{first asymptotic formula}. 

\begin{prop} \label{prop: formula for local volumes}

Let $S_{p,b} \subset U_{1,b}(\mathbb{Z}_p)$ be a non-empty open set whose boundary has measure $0$. Consider the set $\Lambda_{A,b,p} = V_{A,b}(\mathbb{Z}_p) \cap \pi^{-1}(S_p)$. Then we have \[\Vol( \Lambda_{A,b,p}) = \left| \mathcal{J}_A \right|_p \Vol(\SO_A(\mathbb{Z}_p)) \int_{f \in S_{p,b}} m_p(f,A) \, \omega_{U}(f).\] 
We also have \[\Vol\left( \mathcal{F}_A \cdot R_{A,b}^{r_2,\delta} (X) \right) =  \chi_A(\delta) \left| \mathcal{J}_A \right| \Vol( \mathcal{F}_A) \Vol(U_{1,b}^{r_2}(\mathbb{R})_{H < X}).\]
Here, we take volumes with respect to the measure $\omega_{A}$ defined in \S \ref{subsec: choice of volume forms}. 
\end{prop}

\section{$2$-adic mass distribution} \label{sec: 2-adic distribution}

Computing the $2$-adic and the archimedean local masses is more delicate than at odd primes. In this section, we determine the average value of $m_{2}(f,A)$ over subsets of $S_{2,b} \subset U_{1,b}(\Z_{2})$ defined by mod $2$ conditions. 

The values $m_2(f, A)$ only depend on the $\SL_n(\Z_2)$ equivalence class of $A$, and we will compute their averages for convenient representatives. There are exactly two integer-matrix quadratic forms over $\mathbb{Z}_2$ of determinant $(-1)^{\frac{n-1}{2}}$ and odd dimension $n$ up to $\SL_n(\mathbb{Z}_2)$ equivalence. They are distinguished by their Hasse--Witt invariants, but are equivalent modulo $2$, see for instance \cite[Lemmas 1, 2 and 3]{MR12640}! 

To fix ideas, choose representatives of these forms, $\mathfrak{M}_1$ and $\mathfrak{M}_{-1}$, which coincide modulo $2$ and whose subscripts indicate the value of the Hasse--Witt invariant: For $n \equiv 1 \pmod 4$ we take $\mathfrak{M_1} = \diag(1,\ldots,1,1,1,1)$ and $\mathfrak{M}_{-1} = \diag(1,1,\ldots,1,1,-1,-1)$, while for $n \equiv 3 \pmod 4$, we take $\mathfrak{M_1} = \diag(1,\ldots,1,1,1,-1)$ and $\mathfrak{M}_{-1} = \diag(1,1,\ldots,1,-1,-1,-1)$.

Let $S_{2,b} \subset U_{1,b}(\mathbb{Z}_2)$ and let $c_{2}(S_{2,b},\mathfrak{M}_1)$ and $c_{2}(S_{2,b},\mathfrak{M}_{-1})$ denote the averages of the $2$-adic masses $m_{2}(f,\mathfrak{M}_1)$ and $m_{2}(f,\mathfrak{M}_{-1})$ over $S_{2,b}$:
\[
c_2(S_{2,b},\mathfrak{M}_1) := \frac{\int_{f \in S_{2,b}} m_2(f,\mathfrak{M}_1) \, df}{\Vol(S_{2,b})} \hspace{5ex} \text{ and } \hspace{5ex}
c_2(S_{2,b},\mathfrak{M}_{-1}) := \frac{\int_{f \in S_{2,b}} m_2(f,\mathfrak{M}_{-1})}{\Vol(S_{2,b})} \, df.
\]

We obtain the precise values of these averages under the assumption that $S_{2,b}$ is defined by mod $2$ conditions in the following proposition.

\begin{prop} \label{prop: mod 2 masses average}
For $S_{2,b} \subset U_{1,b}(\Z_{2})$ defined modulo $2$, we have $$c_2(S_{2,b},\mathfrak{M}_1) + c_2(S_{2,b},\mathfrak{M}_{-1}) = 2^{n-1} \hspace{5ex} \text{ and } \hspace{5ex} c_2(n,\mathfrak{M}_1) - c_2(n,\mathfrak{M}_{-1}) = \pm_8 2^{\frac{n-1}{2}}$$
where $\pm_8$ is $+$ if $n$ is congruent to $1,3 \pmod 8$ and $-$ otherwise. In particular, we find $$c_2(S_{2,b},\mathfrak{M}_1) = \frac{1}{2}\left(2^{n-1} \pm_8 2^{\frac{n-1}{2}} \right) \hspace{5ex} \text{ and } \hspace{5ex} c_2(S_{2,b},\mathfrak{M}_{-1}) = \frac{1}{2}\left(2^{n-1} \mp_8 2^{\frac{n-1}{2}} \right).$$
\end{prop}

When $f$ is maximal, we obtain the precise values of $m_2(f,\mathfrak{M}_1)$ and $m_2(f,\mathfrak{M}_{-1})$.

\begin{prop} \label{prop: mod 2 masses pointwise}
If $f \in U_1(\Z_2)$ is maximal, we find \[m_2(f,\mathfrak{M}_1) = \frac{1}{2}\left(2^{n-1} \pm_8 2^{\frac{n-1}{2}} \right) \hspace{5 ex} \text{ and } \hspace{5ex} m_2(f,\mathfrak{M}_{-1}) = \frac{1}{2}\left(2^{n-1} \mp_8 2^{\frac{n-1}{2}} \right)\] where $\pm_8$ is $+$ if $n$ is congruent to $1,3 \pmod 8$ and $-$ otherwise. In particular, for $S_{2,b} \subset U_{1,b}(\Z_{2})$ consists of forms which are maximal at $2$ we have
$$c_2(S_{2,b},\mathfrak{M}_1) = \frac{1}{2}\left(2^{n-1} \pm_8 2^{\frac{n-1}{2}} \right) \hspace{5ex} \text{ and } \hspace{5ex} c_2(S_{2,b},\mathfrak{M}_{-1}) = \frac{1}{2}\left(2^{n-1} \mp_8 2^{\frac{n-1}{2}} \right).$$
\end{prop}

\subsection{Equality of Jacobians and proof of Proposition \ref{prop: mod 2 masses average}} 

By Proposition \ref{properties of local masses}, we have $m_2(f,\mathfrak{M}_1)+m_2(f,\mathfrak{M}_{-1}) = 2^{n-1}$ for any $f \in U_{1,b}(\mathbb{Z}_2)$. Thus \[c_2(S_{2,b},\mathfrak{M}_1) + c_2(S_{2,b},\mathfrak{M}_{-1}) = 2^{n-1}.\] In order to determine the individual values of $c_2(n,\mathfrak{M}_1)$ and $c_2(n,\mathfrak{M}_{-1})$, it is thus sufficient to find their ratio. 

Let $\Lambda_{\mathfrak{M}_i}(2) := \pi^{-1}(S_{2,b}) \cap V_{\mathfrak{M}_{i},b}(\Z_{2})$ for $i = 1,-1$ be the preimage in $V_{\mathfrak{M}_{i},b}(\Z_{2})$ of $S_{2,b}$ under the resolvent map. Recall the following expressions, which were a corollary of Proposition \ref{prop: formula for local volumes}:
\begin{align*}
\Vol(\Lambda_{\mathfrak{M}_i}(2)) &=  \left| \mathcal{J}_{\mathfrak{M}_i} \right|_2 \Vol(\SO_{\mathfrak{M}_i}(\mathbb{Z}_2)) \int_{f \in S_{2,b}} m_2(f,\mathfrak{M}_i) \, df \\ 
&= c_2(S_{2,b},\mathfrak{M}_i) \Vol(S_{2,b}) \left| \mathcal{J}_{\mathfrak{M}_i} \right|_2 \Vol(\SO_{\mathfrak{M}_i}(\mathbb{Z}_2)).
\end{align*}

Now assume that $S_{2,b} \subset U_{1,b}(\Z_{2})$ is defined by mod $2$ conditions. We now obtain the ratio of $c_2(S_{2,b},\mathfrak{M}_1)$ to $c_2(S_{2,b},\mathfrak{M}_{-1})$ by arguing in three steps:
\begin{itemize}
\item showing that $\Vol(\Lambda_{\mathfrak{M}_1}(2)) = \Vol(\Lambda_{\mathfrak{M}_{-1}}(2))$;
\item showing that $\mathcal{J}_{\mathfrak{M}_{1}} =   \mathcal{J}_{\mathfrak{M}_{-1}}$; and,
\item calculating the ratio of the volumes $\Vol(\SO_{\mathfrak{M}_1}(\mathbb{Z}_2))$ and $\Vol(\SO_{\mathfrak{M}_{-1}}(\mathbb{Z}_2))$.
\end{itemize} Together, this will yield the value of the ratio of $c_2(\mathfrak{M}_1)$ and $c_2(\mathfrak{M}_{-1})$, and hence their values. \\

We show that volumes $\Vol(\Lambda_{\mathfrak{M}_1}(2)) = \Vol(\Lambda_{\mathfrak{M}_{-1}}(2))$.

\begin{lem} [Point count]
Let $S_{2,b} \subset U_{1,b}(\mathbb{Z}_2)$ be a local condition on the space of monic polynomials at the prime $2$ defined modulo $2$. Denote by $\Lambda_{\mathfrak{M}_1,b}(2)$ and $\Lambda_{\mathfrak{M}_1,b}(2)$ the preimages in $V_{\mathfrak{M}_1,b}(\mathbb{Z}_2)$ and $V_{\mathfrak{M}_{-1},b}(\mathbb{Z}_2)$ respectively of $S_{2,b}$ under the resolvent map $\pi$. Then we have an equality of volumes  \[\Vol(\Lambda_{\mathfrak{M}_1,b}(2)) = \Vol(\Lambda_{\mathfrak{M}_{-1},b}(2)).\]
\end{lem}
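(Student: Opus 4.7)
The plan is to exploit the congruence $\mathfrak{M}_1 \equiv \mathfrak{M}_{-1} \pmod 2$, which holds because every diagonal entry of both matrices is $\pm 1$ and $-1 \equiv 1 \pmod 2$; both reduce to the identity matrix $I_n$ in $\Sym_n(\mathbb{F}_2)$. The hypothesis that $S_2$ is defined modulo $2$ means that both volumes being compared are determined purely by information modulo $2$, and the congruence above ensures that the two packets of information coincide.

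First, I would observe that the resolvent map $\pi(A,B) = (-1)^{(n-1)/2}\det(Ax-B)$, viewed as a polynomial function of $B$, reduces to the same function $B \mapsto (-1)^{(n-1)/2}\det(I_nx-B)$ modulo $2$ regardless of whether $A=\mathfrak{M}_1$ or $A=\mathfrak{M}_{-1}$. Applied to the $x^{n-1}$-coefficient, this shows that the defining linear equation of $V_{A,b}$ (which is a trace condition of the shape $\tr(A^{-1}B) = \mathrm{const}(b)$) reduces to the single linear equation $\tr(B) \equiv \pm b \pmod 2$ on $\Sym_n(\mathbb{F}_2)$, independently of $A \in \{\mathfrak{M}_1,\mathfrak{M}_{-1}\}$. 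Since this reduction is a non-zero linear form, the reduction map $V_{A,b}(\mathbb{Z}_2) \twoheadrightarrow V_{A,b}(\mathbb{F}_2)$ is surjective and $V_{\mathfrak{M}_1,b}(\mathbb{F}_2) = V_{\mathfrak{M}_{-1},b}(\mathbb{F}_2)$ as the same affine hyperplane in $\Sym_n(\mathbb{F}_2)$, of cardinality $2^{n(n+1)/2-1}$.

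Next, because $S_2 \subset U_{1,b}(\mathbb{Z}_2)$ is a union of residue classes modulo $2$, the set $\Lambda_A(2) = \pi^{-1}(S_2) \cap V_{A,b}(\mathbb{Z}_2)$ is the preimage under reduction of a subset $T_A \subset V_{A,b}(\mathbb{F}_2)$ cut out by the reductions of $\pi$ and of $S_2$ modulo $2$. By the previous step both ingredients are independent of $A$, so $T_{\mathfrak{M}_1} = T_{\mathfrak{M}_{-1}} =: T$ as subsets of $\Sym_n(\mathbb{F}_2)$.

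Finally, I would conclude with a straight fibre count: under the normalization that $V_{A,b}(\mathbb{Z}_2)$ has total measure $1$, we get $\Vol(\Lambda_A(2)) = |T|/2^{n(n+1)/2-1}$ for both $A$, which yields the desired equality. The whole argument is essentially bookkeeping around one algebraic congruence; the only checkpoint requiring care is surjectivity/uniform-fibre behaviour of reduction modulo $2$ on the affine slice, which follows from the smoothness (nonvanishing) of the defining linear form modulo $2$. I do not anticipate any substantive obstacle.
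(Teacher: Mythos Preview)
Your proposal is correct and follows essentially the same approach as the paper, whose proof consists of the single observation that $\mathfrak{M}_1 \equiv \mathfrak{M}_{-1} \pmod 2$ together with the remark that $\Lambda_{\mathfrak{M}_{\pm 1}}(2)$ are defined by mod $2$ congruence conditions. You have simply unpacked this into explicit detail (identifying the trace condition, checking surjectivity of reduction on the affine slice, and carrying out the fibre count), which is a faithful expansion rather than a different argument.
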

\begin{proof}
The representatives $\mathfrak{M}_1$ and $\mathfrak{M}_{-1}$ are equal modulo $2$. Thus, $V_{\mathfrak{M}_1,b}(\mathbb{Z}_2)$ and $V_{\mathfrak{M}_{-1},b}(\mathbb{Z}_2)$ are equal modulo $2$. Since $\Lambda_{\mathfrak{M}_1}(2)$ and $\Lambda_{\mathfrak{M}_{-1}}(2)$ are defined by imposing congruence conditions mod $2$ on $V_{\mathfrak{M}_1,b}(\mathbb{Z}_2)$ and $V_{\mathfrak{M}_{-1},b}(\mathbb{Z}_2)$ the result follows. 
\end{proof}

We show that $\mathcal{J}_{\mathfrak{M}_{1}} = \mathcal{J}_{\mathfrak{M}_{-1}}$.

\begin{lem}[Equality of Jacobians]
Using the notation of Proposition \ref{change of measure formula}, we have $$\mathcal{J}_{\mathfrak{M}_{1}} =   \mathcal{J}_{\mathfrak{M}_{-1}} \in \Q$$ In particular, their $2$-adic valuations coincide, $\left| \mathcal{J}_{\mathfrak{M}_{1}} \right|_2 = \left| \mathcal{J}_{\mathfrak{M}_{-1}} \right|_2.$
\end{lem}
\begin{proof}
Recall from \cite[Proposition 3.10]{MR3272925}, that the proof of Proposition \ref{change of measure formula} over $K = \mathbb{Z}_p, \mathbb{R}$, or $\mathbb{C}$ proceeds by proving that over $\mathbb{C}$ the identity 
\begin{equation*} 
\int \limits_{\SO_A(\mathbb{C}) \cdot s(U_{1,b}(\mathbb{C}))} \phi(v) \, \omega_{V} = \left| \mathcal{J}_A \right| \int_{f\in U_{1,b}(\mathbb{C})} \int_{\gamma \in \SO_A(\mathbb{C})} \phi(\gamma \cdot s(f)) \, \omega(\gamma) \, \omega_{U}(f)
\end{equation*}
holds for some non-zero rational number $\mathcal{J}_A \in \Q^\times$. The principle of permanence of identities then gives the result for $K = \mathbb{Z}_p, \mathbb{R}$, or $\mathbb{C}$ with the same $\mathcal{J}_A$. 

$\mathcal{J}_A$ can be realized as follows. Consider the map $\psi_s^A \colon \SO_A(\mathbb{C}) \times U_{1,b}(\mathbb{C}) \rightarrow V_b(\mathbb{C})$ given by $\psi_s^A(\gamma,f) = \gamma \cdot s(f)$ for any locally analytic section $s \colon U_{1,b}(\mathbb{C}) \rightarrow V_{A,b}(\mathbb{C})$. Then $\mathcal{J}_{A}$ satisfies \[\mathcal{J}_{A} \omega_{A} \wedge \omega_{U} = (\psi_s^A)^*(\omega_{V}).\] 

Thus, the lemma will follow from comparing the Jacobian change of variables of the maps $\psi_s^{\mathfrak{M}_1}$ and $\psi_s^{\mathfrak{M}_{-1}}$. Now, fix a matrix $g \in \SL_n^{\pm}(\mathbb{C})$ such that $\mathfrak{M}_{-1} = g^t \mathfrak{M}_{1} g$. Consider the map $\sigma_g \colon \SO_{\mathfrak{M}_1}(\mathbb{C}) \rightarrow \SO_{\mathfrak{M}_{-1}}(\mathbb{C})$
defined by $\sigma_g(h) = g^{-1} h g$. Now, fix an analytic section $s_1 \colon U_{1,b}(\mathbb{C}) \rightarrow V_{\mathfrak{M}_1,b}(\mathbb{C})$ and define the analytic section $s_{-1} \colon U_{1,b}(\mathbb{C}) \rightarrow V_{\mathfrak{M}_{-1},b}(\mathbb{C})$ to be $s_{-1} := g \cdot s_1$. The following diagram commutes
\begin{center}
\begin{tikzcd} [row sep = large, column sep = large]
\SO_{\mathfrak{M}_1}(\mathbb{C}) \times U_{1,b}(\mathbb{C}) \arrow[r, "\psi_s^{\mathfrak{M}_1}" above] \arrow[d,"\sigma_g \times {\rm id}" left ]& V_{\mathfrak{M}_{1},b}(\mathbb{C}) \arrow[d, "g \, \cdot " right] \\
\SO_{\mathfrak{M}_{-1}}(\mathbb{C}) \times U_{1,b}(\mathbb{C}) \arrow[r, "\psi_{s_{-1}}^{\mathfrak{M}_{-1}}" below]& V_{\mathfrak{M}_{-1},b}(\mathbb{C})
\end{tikzcd}.
\end{center}
Computing the pullback of the form $\omega_{V}$ from the lower right-hand corner of the diagram to the upper left-hand corner using the two different paths and the fact $(g \cdot)^{*}\omega_{V} = \omega_{V}$ gives
\[\mathcal{J}_{\mathfrak{M}_{-1}} (\sigma_{g}^{*}\omega_{\mathfrak{M}_{-1}}) \wedge \omega_{U} = \mathcal{J}_{\mathfrak{M}_{1}} \omega_{\mathfrak{M}_{1}} \wedge \omega_{U} .\]
It is thus sufficient to prove that $\sigma_{g}^{*}\omega_{\mathfrak{M}_{-1}} = \frac{1}{\det(g)^{n+1}}\omega_{\mathfrak{M}_{1}}$ (since $n$ is odd and $\det g = \pm 1$). Use the notation introduced in \S \ref{sec: product of local volumes} to define the volume forms $\omega_{A}$ and notice that $\sigma_{g}$ is really the restriction of a map $\sigma_{g} \colon X \rightarrow X$ defined by $\sigma_g(h) = g^{-1} h g$. Then it is easy to check that $\sigma_{g}^{*} \omega_{X} = \omega_{X}$ and that the following diagram commutes: 
\begin{center}
\begin{tikzcd}
X \arrow[dd, "\sigma_g"] \arrow[rrd, "g^t \mathscr{F}_{\mathfrak{M}_1}g"] &  &   \\
                                                                          &  & T \\
X \arrow[rru, "\mathscr{F}_{\mathfrak{M}_{-1}}"']                         &  &  
\end{tikzcd}
\end{center}
By definition, we have $\omega_{X} = \overline{\omega}_{\mathfrak{M}_{-1}} \wedge \mathscr{F}_{\mathfrak{M}_{-1}}^{*}\omega_{T}$. Pulling back this equation via $\sigma_{g}$ and using the diagram above gives
\begin{align*}
\omega_{X} = \sigma_{g}^{*}(\omega_{X}) &= (\sigma_{g}^{*}\overline{\omega}_{\mathfrak{M}_{-1}}) \wedge (\sigma_{g}^{*}\mathscr{F}_{\mathfrak{M}_{-1}}^{*}\omega_{T}) \\
&= (\sigma_{g}^{*}\overline{\omega}_{\mathfrak{M}_{-1}}) \wedge (\mathscr{F}_{\mathfrak{M}_{1}}^{*} (g^{t} (\cdot) g)^{*}\omega_{T}) \\ 
&= (\sigma_{g}^{*}\overline{\omega}_{\mathfrak{M}_{-1}}) \wedge (\det g)^{n+1} \mathscr{F}_{\mathfrak{M}_{1}}^{*} \omega_{T}
\end{align*} where the third line follows from Hanke \cite[proof of Lemma 2.2]{MR2155083}. Pulling back $(\det g)^{n+1} \sigma_{g}^{*}\overline{\omega}_{\mathfrak{M}_{-1}}$ via $\iota_{\SO_{\mathfrak{M}_{1}}}$ and using independence of $\omega_{\mathfrak{M}_{1}}$ on choices completes the proof. 
\end{proof}

We find the ratio of $\Vol(\SO_{\mathfrak{M}_1}(\mathbb{Z}_2))$ and $\Vol(\SO_{\mathfrak{M}_{-1}}(\mathbb{Z}_2))$ computed with respect to $\omega_{\mathfrak{M}_{1}}$ and $\omega_{\mathfrak{M}_{-1}}$ by using the values of the local representation densities at $p=2$ from the Smith--Minkowski--Siegel mass formula.

\begin{prop} We have
\begin{align*}
\frac{c_2\left(n,\mathfrak{M}_1\right)}{c_2 \left(n,\mathfrak{M}_{-1}\right)} = \frac{\Vol(\Lambda_{\mathfrak{M}_1,b}(2)) \Bigg(\left| \mathcal{J}_{\mathfrak{M}_{-1}} \right|_2 \Vol(\SO_{\mathfrak{M}_{-1}}(\mathbb{Z}_2)) \Bigg)}{\Vol(\Lambda_{\mathfrak{M}_{-1},b}(2)) \Bigg(\left| \mathcal{J}_{\mathfrak{M}_1} \right|_2 \Vol(\SO_{\mathfrak{M}_1}(\mathbb{Z}_2)) \Bigg)} &= \frac{\Vol(\SO_{\mathfrak{M}_{-1}}(\mathbb{Z}_2))}{\Vol(\SO_{\mathfrak{M}_1}(\mathbb{Z}_2))} \\
&=\frac{2^{n-1} \pm_8 2^{\frac{n-1}{2}}}{2^{n-1} \mp_8 2^{\frac{n-1}{2}}},
\end{align*}
where $\pm_8$ is $+$ if $n$ is congruent to $1, 3 \pmod 8$ and $-$ otherwise.
\end{prop}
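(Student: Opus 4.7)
The first equality is really just bookkeeping: the two change-of-measure identities
\begin{equation*}
\Vol(\Lambda_{\mathfrak{M}_{\pm 1}}(2)) = c_2(n, \mathfrak{M}_{\pm 1}) \cdot \left|\mathcal{J}_{\mathfrak{M}_{\pm 1}}\right|_2 \cdot \Vol(\SO_{\mathfrak{M}_{\pm 1}}(\mathbb{Z}_2))
\end{equation*}
displayed just above the proposition are solved for $c_2(n, \mathfrak{M}_{\pm 1})$ and divided. For the second equality, I would plug in the two preceding lemmas: $\Vol(\Lambda_{\mathfrak{M}_1}(2)) = \Vol(\Lambda_{\mathfrak{M}_{-1}}(2))$ and $|\mathcal{J}_{\mathfrak{M}_1}|_2 = |\mathcal{J}_{\mathfrak{M}_{-1}}|_2$ cancel out, leaving the announced ratio of $\SO(\mathbb{Z}_2)$-volumes.

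The third equality is where the actual work lies. My plan is to express each $2$-adic volume as a stabilised local point count
\begin{equation*}
\Vol(\SO_{\mathfrak{M}_{\pm 1}}(\mathbb{Z}_2)) = 2^{-k\dim \SO_n} \cdot \# \{ g \in \SL_n(\mathbb{Z}/2^k\mathbb{Z}) \colon g^t \mathfrak{M}_{\pm 1} g \equiv \mathfrak{M}_{\pm 1} \ \mathrm{mod}\ 2^k \}
\end{equation*}
for $k$ above the (small) smoothness threshold for orthogonal groups over $\mathbb{Z}_2$, and then reduce to an $\mathbb{F}_2$-point count. Since $\mathfrak{M}_{\pm 1}$ has odd rank $n = 2m+1$, its mod-$2$ reduction defines a symmetric bilinear form on $\mathbb{F}_2^n$ with a one-dimensional radical; the quotient by this radical is a non-degenerate $\mathbb{F}_2$-quadratic form of rank $2m$ whose type $\epsilon \in \{+,-\}$ is determined by the Hasse--Witt invariant recorded in the subscript of $\mathfrak{M}_{\pm 1}$. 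Using the classical formula
\begin{equation*}
|\SO^{\epsilon}_{2m}(\mathbb{F}_2)| = 2^{m(m-1)} (2^m - \epsilon) \prod_{i=1}^{m-1}(2^{2i}-1)
\end{equation*}
and cancelling the contribution of the radical-stabilising subgroup, which is identical on the two sides of the ratio, I extract
\begin{equation*}
\frac{\Vol(\SO_{\mathfrak{M}_{-1}}(\mathbb{Z}_2))}{\Vol(\SO_{\mathfrak{M}_1}(\mathbb{Z}_2))} = \frac{2^m + \epsilon_0}{2^m - \epsilon_0} = \frac{2^{n-1} \pm_8 2^{(n-1)/2}}{2^{n-1} \mp_8 2^{(n-1)/2}}
\end{equation*}
after multiplying top and bottom by $2^m$, with $\epsilon_0 \in \{+1,-1\}$ dictated by the Hasse--Witt correspondence.

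The hard part will be tracking the sign $\epsilon_0$ correctly as a function of $n \bmod 8$: this is precisely where the $\pm_8$ convention enters. The cleanest route is to invoke the classification of non-degenerate unimodular odd-rank $\mathbb{Z}_2$-lattices by Hasse--Witt invariant (as in \cite{JonesCanonical} or \cite{ConwaySloaneClassification}) and match the Arf invariant of the mod-$2$ quotient against the subscript on $\mathfrak{M}_{\pm 1}$; alternatively, one verifies the sign for the smallest representative $n$ in each residue class modulo $8$ and extends by a stabilisation argument. A fallback that bypasses the mod-$2$ bookkeeping entirely is the Smith--Minkowski--Siegel mass formula: since the genera of $\mathfrak{M}_1$ and $\mathfrak{M}_{-1}$ agree at every finite place other than $2$, the ratio of their global masses determines the $2$-adic volume ratio once the Archimedean factors are pinned down.
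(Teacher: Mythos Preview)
Your handling of the first two equalities is correct and matches the paper exactly: both follow from the two preceding lemmata on $\Vol(\Lambda_{\mathfrak{M}_{\pm 1}}(2))$ and $|\mathcal{J}_{\mathfrak{M}_{\pm 1}}|_2$.

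Your primary approach to the third equality, however, contains a genuine gap. You propose to reduce the $2$-adic volume ratio to an $\mathbb{F}_2$-point count, claiming that the type $\epsilon \in \{+,-\}$ of the resulting rank-$2m$ quadratic form over $\mathbb{F}_2$ is ``determined by the Hasse--Witt invariant recorded in the subscript of $\mathfrak{M}_{\pm 1}$.'' But this cannot be right: as the paper uses explicitly in the proof of the Point Count lemma, the representatives $\mathfrak{M}_1$ and $\mathfrak{M}_{-1}$ are \emph{equal modulo $2$} (both reduce to the identity matrix). Hence any invariant computed from their mod-$2$ reductions --- radical, quotient quadratic form, Arf invariant, $\mathbb{F}_2$-point count of the orthogonal group --- is identical for the two, and cannot produce a nontrivial ratio. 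The distinction between $\mathfrak{M}_1$ and $\mathfrak{M}_{-1}$ only becomes visible modulo $4$, so a naive reduction to $\mathbb{F}_2$ is doomed; one must either work modulo a higher power of $2$ and track the lifting carefully, or use a formalism that already encodes this.

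Your ``fallback'' is in fact what the paper does: it invokes the $2$-adic local density formulae of Conway--Sloane \cite{ConwaySloaneMass}, computing the \emph{diagonal factors} $M_2(\mathfrak{M}_{\pm 1})$ via the octane value (which \emph{does} distinguish the two forms, being $1$ versus $5 \bmod 8$), and reads off the volume ratio as the inverse density ratio. So your fallback is the right route, and is not really a fallback but the main argument; the $\mathbb{F}_2$ shortcut you propose first does not go through.
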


\begin{proof}
The first two equalities above follow directly from the preceding two lemmas. 

The value of the ratio of the volumes of $\SO_{\mathfrak{M}_{-1}}(\mathbb{Z}_2)$ and $\SO_{\mathfrak{M}_{1}}(\mathbb{Z}_2)$ is equal to the ratio of the $2$-adic representation densities of the lattices defined by $\mathfrak{M}_{1}$ and $\mathfrak{M}_{-1}$ respectively. 

The ratio of the $2$-adic densities of the lattices defined by $\mathfrak{M}_{1}$ and $\mathfrak{M}_{-1}$, is in this case equal to the ratio of the diagonal factors in the language of \cite[\S 4]{MR965484}, which are denoted by $M_2(\mathfrak{M}_{1})$ and $M_2(\mathfrak{M}_{-1})$ respectively in \textit{loc. cit.}. \footnote{Note that this is distinct from the $2$-adic mass! We use this notation only in this proof and only to follow \cite{MR965484} closely.} There are general formulae for computing the value of the diagonal factors at every prime, which are a bit subtle in the case of $p=2$. To find the values, we need to determine the freeness, oddity, and octane values of the forms $\mathfrak{M}_{1}$ and $\mathfrak{M}_{-1}$.  

$\mathfrak{M}_{1}$ and $\mathfrak{M}_{-1}$ are already in $2$-adic Jordan form. Note that $\mathfrak{M}_{1}$ is free, odd, and has octane value $1 \pmod 8 $ if $n \equiv 1, 3 \pmod 8$ and $5 \pmod 8$ if $n \equiv 5,7 \pmod 8$. On the other hand, $\mathfrak{M}_{-1}$ is free, odd, but has octane value $5 \pmod 8 $ if $n \equiv 1, 3 \pmod 8$ and $1 \pmod 8$ if $n \equiv 5,7 \pmod 8$. Applying the formulae for the diagonal factor
\cite[\S 5 Equation (5)]{MR965484}, we find
\[M_2(\mathfrak{M}_{1}) = \frac{1}{2 \prod_{i = 1}^{\frac{n-1}{2}-1}(1-2^{-2i})} \frac{1}{1 \mp_8 2^{-\frac{n-1}{2}}} \hspace{3ex} \text{ and } \hspace{3ex} M_2(\mathfrak{M}_{-1}) = \frac{1}{2\prod_{i = 1}^{\frac{n-1}{2}-1}(1-2^{-2i})} \frac{1}{1 \pm_8 2^{-\frac{n-1}{2}}}, \]
completing the proof of the proposition.
\end{proof}

\subsection{Quadratic refinement $\eta$ and proof of Proposition \ref{prop: mod 2 masses pointwise}} 

Let $f \in U_1(\Z_2)$ be maximal and have $m+1$ different irreducible factors. 

The following fact can be gathered from \cite[\S 6.2]{MR3156850}: There exists an even quadratic form $$\eta' \colon (R_f^\times/(R_f^\times)^2)_{N \equiv 1} \rightarrow \F_2$$ such that 1) $\eta'$ has a maximal isotropic subspace of dimension $m + (n-1)/2$, and 2) the zeros of $\eta'$ in $(R_f^\times/(R_f^\times)^2)_{N \equiv 1}$ correspond to orbits of $\SL_n(\Z_2)$ on $V(\Z_2) \cap \pi^{-1}(f)$ whose first component is equivalent to $\mathfrak{M}_1$ for $n \equiv 1,3 \pmod{8}$ and $\mathfrak{M}_{-1}$ for $n \equiv 5,7 \pmod{8}$. Indeed, in the notation of \textit{loc. cit.}, $\eta'$ is the restriction of $\eta$ to $(R_f^\times/(R_f^\times)^2)_{N \equiv 1}$ and $J(f)/2J(f)$ is the maximal isotropic subspace. 

\begin{proof}[Proof of Proposition \ref{prop: mod 2 masses pointwise}]
It thus remains to find the number of zeroes of this even quadratic form $\eta'$. 

Since $\eta'$ is even, it has the form $\eta'(x_0,\cdots,x_R) = x_0 x_1 + \ldots + x_{t-1} x_t$ with respect to some $\F_2$ basis of $(R_f^\times/(R_f^\times)^2)_{N \equiv 1}$ \cite[Theorem 2.1]{MR1680530}, where $R = \dim_{\F_2} (R_f^\times/(R_f^\times)^2)_{N \equiv 1} - 1 = m + n - 2$. The number of zeroes of $\eta'$ is then equal to $2^{R-t}(2^t + 2^{\frac{t-1}{2}})$ \cite[Theorem 2.7]{MR1680530}. 

We can figure $t$ out by computing the dimension of the maximal isotropic subspace of $\eta'$ in two ways: $m+(n-1)/2 = (t+1)/2 + (R+1)-(t+1)$. This implies that $t = 2(R+1 - m - (n-1)/2)) - 1 = n-2$. 

Because the stabilizer of any element in $V(\Z_2) \cap \pi^{-1}(f)$ has size $|R_f^\times[2]|_{N \equiv 1} = 2^m$, it follows that
$$m_2(f,\mathfrak{M}_1) = \frac{1}{2^m}\cdot 2^m(2^{n-2} \pm_8 2^{\frac{n-3}{2}}) = \frac{1}{2} (2^{n-1} \pm_8 2^\frac{n-1}{2})$$
and, consequently, $m_2(f,\mathfrak{M}_{-1}) = \frac{1}{2} (2^{n-1} \mp_8 2^\frac{n-1}{2})$, because of Proposition \ref{properties of local masses} 1). \end{proof}

\section{Archimedean mass distribution}\label{sec: archimedean distribution}

It now remains to determine the value of the archimedean local mass $m_{\infty}(r_2,A) = \sum_{\delta \in \mathcal{T}(r_{2})} \chi_{A}(\delta)$ for each $A \in \mathscr{L}_\Z$. This mass depends only on the real signature of $A$, and we obtain its precise value in the following proposition.

\begin{prop}[Distribution of the archimedean mass] \label{prop: distribution of archimedean mass}
Let $A \in \mathscr{L}_{\mathbb{Z}}$ and $0 \le r_2 \le (n-1)/2$ and $n = r_1+2r_2$. Suppose that $A$ has $q$ negative eigenvalues. Then the archimedean mass of $A$ with respect to $r_2$ is given by \[m_\infty(r_2,A) = \sum_{\delta \in \mathcal{T}(r_2)} \chi_A(\delta) = {r_1 \choose q-r_2}.\] In particular, if $q < r_2$ then $\chi_A(\delta)=0$ for all $\delta \in \T(r_2)$.
\end{prop}

Recall from \S \ref{sec: product of local volumes} that $m_\infty(r_2,A)$ only depends on the $\SL_n(\R)$-orbit of $A$. In view of our application of the classification theorem for genera of integral symmetric bilinear forms in organising the calculation \S \ref{sec: proof of main theorem}, it will be useful to know the value the sums \[c_{\infty,1}(r_2) := \sum\limits_{\substack{[q_{\infty}] \in \frac{\Sym_2(\R^n)}{\SL_n(\R)} \\ \det(q_{\infty}) = \factor \\ e_\infty(q_{\infty}) = 1}}  m_{\infty}(r_2,q_{\infty}) \hspace{5 ex} \text{ and } \hspace{5 ex} c_{\infty,-1}(r_2) := \sum_{\substack{[q_{\infty}] \in \frac{\Sym_2(\R^n)}{\SL_n(\R)} \\ \det(q_{\infty}) = \factor \\ e_\infty(q_{\infty}) = -1}}  m_{\infty}(r_2,q_{\infty})\] of the masses $m_{\infty}(r_{2},\cdot)$ over all $\SL_n(\R)$-equivalence classes of real symmetric matrices of determinant $\factor$ which have Hasse--Witt symbol of $1$ and $-1$ respectively.

\begin{prop}\label{prop: sum archimedean masses}

The quantities $c_{\infty,1}(r_2)$ and $c_{\infty,-1}(r_2)$ satisfy the equations $$c_{\infty,1}(r_2) + c_{\infty,-1}(r_2) = 2^{r_1-1} \hspace{5ex} \text{ and } \hspace{5ex} c_{\infty,1}(r_2) - c_{\infty,-1}(r_2) = \pm_8 2^{\frac{r_1-1}{2}},$$
where $\pm_8$ is $+$ if $n$ is congruent to $1,3 \pmod{8}$ and $-$ otherwise. In particular, 
$$c_{\infty,1}(r_2) = \frac{1}{2}\left(2^{r_1-1} \pm_8 2^{\frac{r_1-1}{2}} \right) \hspace{5ex} \text{ and } \hspace{5ex} c_{\infty,-1}(r_2) = \frac{1}{2}\left(2^{r_1-1} \mp_8 2^{\frac{r_1-1}{2}} \right).$$ 

Let $\delta_{\gg 0} := (11\cdots 1 1) \in \T(r_2)$. If $n$ is congruent to $1,3 \pmod 8$, then every $A \in \mathscr{L}_\Z$ with $\chi_A(\delta_{\gg 0}) = 1$ has $e_\infty(A) = 1$. If $n$ is congruent to $5,7 \pmod 8$ then all such $A$ have $e_{\infty}(A) = -1$.
\end{prop}

\subsection{Proof of Proposition \ref{prop: distribution of archimedean mass}} 

The idea is to use the alternative description of the $\SO_A(\mathbb{R})$ orbits on $V_A(\mathbb{R}) \bigcap \pi^{-1}(f)$ in terms of $\SO_{W_A}$-orbits of self-adjoint operators on the bilinear space $W_A$ of rank $n$ with associated Gram matrix $A$ from Lemma \ref{self-adjoint operators}.  \footnote{$\W_{A}$ is simply defined as the space $\R^{n}$ with bilinear form given in standard coordinates by the matrix $A$.} The spectral theorem gives a classification of these orbits in which it is easy to count. 

\begin{proof}[Proof of Proposition \ref{prop: distribution of archimedean mass}] \footnote{This extends the reasoning of Bhargava--Gross \cite[\S 6.3]{MR3156850} from the totally split case to the case of general $A \in \GL_n(\R)$.}
Let $\W_A$ be the bilinear space of rank $n$ with associated Gram matrix $A$. $\W_A$ has signature $(n-q,q)$. Fix an $f \in U_1(\R)$ with $r_1$ real roots and $r_2$ pairs of complex roots. By Lemma \ref{self-adjoint operators}, $m_\infty(r_2,A)$ is the number of $\SO_{\W_A}(\R)$-conjugacy classes of self-adjoint linear operators with characteristic polynomial $f$. Suppose that $T$ is a self-adjoint operator on $\W_A$ with characteristic polynomial $f$. By the spectral theorem: This operator has $r_1$ distinct eigenspaces of dimension $1$, $r_2$ distinct $ T$-stable subspaces of dimension $2$ (corresponding to pairs of complex conjugate eigenvalues), and the bilinear space $\W_A$ decomposes as an orthogonal direct sum of these subspaces. Since each of the $2$-dimensional eigenspaces has signature $(1,1)$, we see that $n-q \ge r_2$ and $q \ge r_2$, and that the number of $1$-dimensional eigenspaces of $T$ which are negative definite is $q - r_2$. This subset of $q-r_2$ negative definite $1$-dimensional eigenspaces for $T$ determines its $\SO_{\W_A}(\R)$-orbit. Conversely, such a choice can represent any $\SO_{\W_A}(\R)$-conjugacy class of self-adjoint linear operators with characteristic polynomial $f$. We conclude that $m_\infty(r_2,A) = {r_1 \choose q-r_2}$. \end{proof}

\subsection{Proof of Proposition \ref{prop: sum archimedean masses}}

The proof is to sum Proposition \ref{prop: distribution of archimedean mass}. To this end, we will need identities concerning sums of binomial coefficients.

\begin{lem}\label{binomial sum}
Let $\ell$ be an odd number. Then 
\[\sum_{k=0}^{\frac{\ell-1}{2}}  {\ell \choose 2k} = 2^{\ell-1} \hspace{5ex}  \text{and} \hspace{5ex} \sum_{k=0}^{\frac{\ell-1}{2}} (-1)^k {\ell \choose 2k} = \pm_8' 2^{\frac{\ell-1}{2}} \] where $\pm_8'$ is $+$ if $\ell$ is congruent to $1,7 \pmod 8$ and $-$ otherwise. Furthermore, we have \[\sum_{k=0}^{\frac{\ell-1}{2}}  {\ell \choose 2k+1} = 2^{\ell-1} \hspace{5ex}  \text{and} \hspace{5ex} \sum_{k=0}^{\frac{\ell-1}{2}} (-1)^k {\ell \choose 2k+1} = \pm_8 2^{\frac{\ell-1}{2}}\] where $\pm_8$ is $+$ if $\ell$ is congruent to $1,3 \pmod 8$ and $-$ otherwise.  
\end{lem}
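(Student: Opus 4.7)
The plan is to obtain all four identities from the single observation that the binomial expansion of $(1+x)^l$ at the four points $x \in \{1,-1,i,-i\}$ detects, via discrete Fourier analysis modulo $4$, exactly the four sums appearing in the lemma. The first two identities are the standard ``even vs.\ odd'' split of Pascal's row, while the last two require evaluation at $i$ and the resulting argument modulo $8$.

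First I would expand $(1+x)^l = \sum_{j=0}^{l} \binom{l}{j} x^j$ and substitute $x = \pm 1$. Adding and subtracting the resulting identities gives
\begin{equation*}
\sum_{k=0}^{(l-1)/2} \binom{l}{2k} \;=\; \sum_{k=0}^{(l-1)/2} \binom{l}{2k+1} \;=\; 2^{l-1},
\end{equation*}
which is the first and third claim (using that $l$ is odd so $2k$ runs through $\{0,2,\dots,l-1\}$ and $2k+1$ through $\{1,3,\dots,l\}$).

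Next I would substitute $x = i$. Since $i^{2k} = (-1)^k$ and $i^{2k+1} = i(-1)^k$, separating real and imaginary parts of $(1+i)^l$ yields
\begin{equation*}
\sum_{k=0}^{(l-1)/2} (-1)^k \binom{l}{2k} \;=\; \mathrm{Re}\,(1+i)^l, \qquad \sum_{k=0}^{(l-1)/2} (-1)^k \binom{l}{2k+1} \;=\; \mathrm{Im}\,(1+i)^l.
\end{equation*}
Writing $1+i = \sqrt{2}\, e^{i\pi/4}$, we get $(1+i)^l = 2^{l/2}\bigl(\cos(l\pi/4) + i\sin(l\pi/4)\bigr)$. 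A case analysis on the residue of $l$ modulo $8$ reveals that for odd $l$ both $\cos(l\pi/4)$ and $\sin(l\pi/4)$ equal $\pm 1/\sqrt{2}$, with $\cos(l\pi/4) = +1/\sqrt{2}$ exactly when $l \equiv \pm 1 \pmod 8$ (producing the sign $\pm_8'$) and $\sin(l\pi/4) = +1/\sqrt{2}$ exactly when $l \equiv 1, 3 \pmod 8$ (producing the sign $\pm_8''$). The factor $2^{l/2} \cdot 1/\sqrt{2} = 2^{(l-1)/2}$ gives the claimed right-hand side.

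There is no real obstacle here beyond the bookkeeping of the four residues modulo $8$ to match the two sign conventions $\pm_8'$ and $\pm_8''$ with the conventions already fixed in the statement. The whole argument is an instance of the ``roots of unity filter'' applied to $(1+x)^l$ at fourth roots of unity.
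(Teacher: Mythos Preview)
Your proposal is correct and follows essentially the same approach as the paper: both evaluate the binomial expansion of $(1+x)^l$ at $x=\pm 1$ to get the first and third identities, and then take real and imaginary parts of $(1+i)^l = 2^{l/2}e^{il\pi/4}$ with a case analysis modulo $8$ to obtain the signed identities. Your ``roots of unity filter'' framing is a nice way to package the same computation.
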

\begin{proof}
    These identities follow from the binomial theorem. Indeed, for the first one, note that $2^\ell = (1+1)^\ell = \sum_{k=0}^{\frac{\ell-1}{2}} {\ell \choose 2k} + \sum_{k=0}^{\frac{\ell-1}{2}} {\ell \choose 2k+1}$ while $0 = (1-1)^\ell = \sum_{k=0}^{\frac{\ell-1}{2}} {\ell \choose 2k} - \sum_{k=0}^{\frac{\ell-1}{2}} {\ell \choose 2k+1}$. Thus $\sum_{k=0}^{\frac{\ell-1}{2}}  {\ell \choose 2k} = 2^{\ell-1}$. For the second one, examine $(1+i)^\ell$ in the complex plane. For notational clarity, let us write $z = (1+i)^\ell$. On the one hand, $z = \sum_{k=0}^\ell {\ell \choose k} i^k =  ( \sum_{k=0}^{\frac{\ell-1}{2}} (-1)^k {\ell \choose 2k} ) + i( \sum_{k=0}^{\frac{\ell-1}{2}} (-1)^{k}{\ell \choose 2k+1}).$ On the other hand, $z = 2^{\frac{\ell}{2}}e^{i\frac{\ell\pi}{4}}$ since $1+i = \sqrt{2}e^{i\frac{\pi}{4}}$ and thus we find $\mathfrak{Re}(z) = \pm \mathfrak{Im}(z)$ and $2^\ell = \mathfrak{Re}(z)^2 + \mathfrak{Im}(z)^2$. Therefore, $\sum_{k=0}^{\frac{\ell-1}{2}} (-1)^k {\ell \choose 2k} = \pm 2^{\frac{\ell-1}{2}}$. Note that $\mathfrak{Re}(z)>0$ if and only if $\ell \equiv 0,1,7 \pmod 8$, while $\mathfrak{Im}(z) > 0 $ if and only if $\ell \equiv 1,2,3 \pmod 8$. Thus, $\sum_{k=0}^{\frac{\ell-1}{2}} (-1)^k {\ell \choose 2k} = \pm_8' 2^{\frac{\ell-1}{2}}$ and $\sum_{k=0}^{\frac{\ell-1}{2}} (-1)^k {\ell \choose 2k+1} = \pm_8 2^{\frac{\ell-1}{2}}$ as desired.
\end{proof}

Being careful with signs, we obtain the value of $c_{\infty,1}(r_2)$ and $c_{\infty,-1}(r_2)$.

\begin{proof}[Proof of Proposition \ref{prop: sum archimedean masses}]
The first two statements follow at once from Proposition \ref{prop: distribution of archimedean mass} and Lemma \ref{binomial sum}. The last statement follows from the fact that the distinguished orbit has first component equal to matrix with $1$'s on the anti-diagonal by \cite[\S 4.1]{MR3156850} or by a direct computation from Theorem \ref{integral parametrisation} using the basis $\langle1,\theta,\ldots,\theta^{\frac{n-1}{2}}, \zeta_{\frac{n-1}{2}+1},\ldots,\zeta_{n-1} \rangle$ of $I_f((n-3)/2)$.
\end{proof}

\section{{Proof of Theorem \ref{main theorem} and Corollary \ref{main corollary}}} \label{sec: proof of main theorem}

We prove Theorem \ref{main theorem} by putting together the elements developed in the previous sections and briefly explain how to obtain Corollary \ref{main corollary}.

\subsection{Proof of Theorem \ref{main theorem}}

Let $\mathscr{L}_{\mathbb{Z}}$ be a set of representatives for $\SL_n(\Z)$ classes of integral $n$-ary quadratic forms of determinant $\factor$ and $\mathcal{G}_{\mathbb{Z}}$ be the set of genera of integral $n$-ary quadratic forms of determinant $\factor$. $\mathcal{G}_{\mathbb{Z}}$ partitions $\mathscr{L}_{\mathbb{Z}}$. 

Let $\Sigma$ denote either an acceptable family of monogenised fields or a very large family of monogenised orders with local conditions at $2$ given modulo $2$, associated to the collection $S = \bigsqcup_{0 \le b < n} \prod_{p} S_{b,p}$ of local specifications. For each $A,b,\delta$, let $\Lambda_{A,b}^{\delta}$ be the corresponding collection of projective local specifications on $V_{A,b}^{r_{2},\delta}$ associated to $\prod_{p} S_{b,p}$ via the resolvent map $\pi$. Then $\V(\Lambda_{A,b}^{\delta})$ is very large (resp. acceptable) if $\Sigma$ is very large (resp. acceptable). Proving Theorem \ref{main theorem} amounts to estimating the ratio
\[
\frac{\sum\limits_{\substack{\mathcal{O} \in \Sigma, \\ H_{\text{naive}}(\mathcal{O}) < X}} 2^{r_1+r_2-1} \left| \Cl(\mathcal{O})[2] \right|-\left| \mathcal{I}_2(\mathcal{O}) \right|}{\sum\limits_{\substack{\mathcal{O} \in \Sigma \\ H_{\text{naive}}(\mathcal{O}) < X}} 1}.
\]
By the remark at the end of \S \ref{subsec: absolutely irreducible elements and N(L;X)} and the subsequent sections, we find:
\begin{flalign*}
&= \frac{\sum\limits_{0 \le b <n} \sum\limits_{\delta \in \mathcal{T}(r_2)} \sum\limits_{A \in \mathscr{L}_{\mathbb{Z}}} N(\mathcal{V}(\Lambda_{A,b}^\delta),X)}{\left(\sum\limits_{0 \le b <n} \Vol(U_{1,b}^{r_2}(\mathbb{R}))_{<X})  \prod\limits_p \Vol(S_{b,p})\right) + o\left(X^{\frac{n(n-1)}{2}-1}\right)} \\
&\lesssim \,\,\,\,\, \frac{\sum\limits_{0 \le b <n} \sum\limits_{\delta \in \mathcal{T}(r_2)} \sum\limits_{A \in \mathscr{L}_{\mathbb{Z}}} \frac{1}{\sigma(r_2)} \chi_A(\delta) \left| \mathcal{J}_A \right| \Vol( \mathcal{F}_A) \Vol(U_{1,b}^{r_2}(\mathbb{R})_{H < X}) \prod \limits_p \Vol(\Lambda_{A,b,p}) + o\left(X^{\frac{n(n-1)}{2}-1}\right)}{\left(\sum\limits_{0 \le b <n} \Vol(U_{1,b}^{r_2}(\mathbb{R}))_{<X})  \prod\limits_p \Vol(S_{b,p})\right) + o\left(X^{\frac{n(n-1)}{2}-1}\right)}. &
\end{flalign*}
where $\,\,\, \lesssim \,\,\,$ denotes an inequality which becomes an equality if $\Sigma$ is very large or if $\Sigma$ is acceptable and the tail estimate of Conjecture \ref{conj: conjectural tail estimate} holds. Note that $\Vol( \Lambda_{A,b,p}) = \left| \mathcal{J}_A \right|_p \Vol(\SO_A(\mathbb{Z}_p)) \int_{f \in S_{p,b}} m_p(f,A) \, df$ by \S \ref{sec: product of local volumes}. Furthermore, note that the quantity $\int_{f \in S_{b,2}} m_2(f,A) df /\Vol(S_{b,2})$ is independent of $b$ by Proposition \ref{prop: mod 2 masses average} and Proposition \ref{prop: mod 2 masses pointwise}. We suppress the $b$ and write it as $c_2(S_2,A)$. Expanding and simplifying, we find:
\begin{flalign*}
   = \sum_{\delta \in \mathcal{T}(r_2)} \sum_{A \in \mathscr{L}_{\mathbb{Z}}} \frac{1}{\sigma(r_2)}  \chi_A(\delta) \Vol(\mathcal{F}_A)\prod_p \Vol(\SO_A(\mathbb{Z}_p)) c_2(S_{2},A) + o(1) &&
\end{flalign*}

We now break up the collection $\mathscr{L}_\mathbb{Z}$ into genera and sum over the forms in each genus separately before summing over the distinct genera. Since both $\chi_{A}(\delta)$ and $c_2(S_2, A)$ are constant over the forms in a single genus, they factor out of the inner sum, and we may replace $A$ by its genus in their notation.
\begin{flalign*}
&= \sum_{\delta \in \mathcal{T}(r_2)} \sum_{\mathcal{G} \in \mathcal{G}_{\mathbb{Z}}} \sum_{A \in \mathcal{G} \cap \mathscr{L}_{\mathbb{Z}}} \frac{1}{\sigma(r_2)}  \chi_A(\delta) \Vol(\mathcal{F}_A)\prod_p \Vol(\SO_A(\mathbb{Z}_p))  c_2(S_2,A) + o(1)\\
&= \sum_{\delta \in \mathcal{T}(r_2)} \sum_{\mathcal{G} \in \mathcal{G}_{\mathbb{Z}}} \frac{1}{\sigma(r_2)} \chi_{\mathcal{G}}(\delta) c_2(S_2,\mathcal{G}) \left(\sum_{A \in \mathcal{G} \cap \mathscr{L}_{\mathbb{Z}}}  \Vol(\mathcal{F}_A)\prod_p \Vol(\SO_A(\mathbb{Z}_p)) \right) + o(1) &&
\end{flalign*}
Now, the innermost sum gives the Tamagawa number of the special orthogonal group of $A$ and is always equal to $2$, independent of $A$, as seen in \cite{MR0213362} or \cite[\S 1]{MR1131433}. We denote it by $\tau(\SO)$.
\begin{flalign*}
    = \tau(\SO) \sum_{\delta \in \mathcal{T}(r_2)} \sum_{\mathcal{G} \in \mathcal{G}_{\mathbb{Z}}}  \frac{1}{\sigma(r_2)} \chi_{\mathcal{G}}(\delta) c_2(S_2,\mathcal{G}) + o(1) &&
\end{flalign*}
At this point, we simplify the sum using the fact that $\sigma(r_2) = \frac{1}{2^{r_1+r_2-1}}$, the value of the $2$-adic mass, the value of the infinite mass, and the classification of genera of integral unimodular quadratic forms from \S \ref{subsec: background on quadratic forms}.
\begin{flalign*}
    &= \frac{\tau(\SO)}{2^{r_1+r_2-1}} \sum_{\delta \in \mathcal{T}(r_2)} \sum_{\mathcal{G} \in \mathcal{G}_{\mathbb{Z}}} \chi_{\mathcal{G}}(\delta) c_2(S_2,\mathcal{G}) + o(1) \\
&= \frac{\tau(\SO)}{2^{r_1+r_2-1}} \sum_{\mathcal{G} \in \mathcal{G}_{\mathbb{Z}}} \sum_{\delta \in \mathcal{T}(r_2)}  \chi_{\mathcal{G}}(\delta) c_2(S_2,\mathcal{G}) + o(1)\\
&= \frac{\tau(\SO)}{2^{r_1+r_2-1}} \sum_{\mathcal{G} \in \mathcal{G}_{\mathbb{Z}}}  \left( c_2(S_2,\mathcal{G}) \sum_{\delta \in \mathcal{T}(r_2)}  \chi_{\mathcal{G}}(\delta) \right) + o(1)  \\
&=\frac{\tau(\SO)}{2^{r_1+r_2-1}} \sum_{\substack{([q_2],[q_\infty]) \in \frac{\Sym_2(\mathbb{Z}_2^n)}{\SL_n(\mathbb{Z}_2)} \times \frac{\Sym_2(\R^n)}{\SL_n(\R)} \\  \det(q_2)= \det(q_\infty) = \factor \\ e_2(q_2)e_\infty(q_\infty) = 1}}  c_{2}(S_{2},q_2) m_{\infty}(r_{2},q_{\infty}) + o(1)  \\
&= \frac{\tau(\SO)}{2^{r_1+r_2-1}}  \left[ \left( c_2(S_2,\mathfrak{M}_1) \sum_{\substack{[q_{\infty}] \in \frac{\Sym_2(\R^n)}{\SL_n(\R)} \\ \det(q_{\infty}) = \factor \\ e_\infty(q_{\infty}) = 1}}  m_{\infty}(r_2,q_{\infty}) \right)  + \left( c_2(S_2\mathfrak{M}_{-1}) \sum_{\substack{[q_{\infty}] \in \frac{\Sym_2(\R^n)}{\SL_n(\R)} \\ \det(q_{\infty}) = \factor \\ e_\infty(q_{\infty}) = -1}}  m_{\infty}(r_2,q_{\infty}) \right) \right]+ o(1)  \\
&= \frac{\tau(\SO)}{2^{r_1+r_2-1}} \Big( c_2(S_2,\mathfrak{M}_1)c_{\infty,1}(r_2)  + c_2(S_2,\mathfrak{M}_{-1})c_{\infty,-1}(r_2) \Big) + o(1) &&
\end{flalign*}
Now, these values were obtained in \S \ref{sec: 2-adic distribution} and \S \ref{sec: archimedean distribution}. We substitute them, set $\tau(\SO)=2$ and simplify.
\begin{flalign*}
&= \frac{1}{2^{r_1+r_2-1}} \cdot 2 \cdot \frac{1}{4} \left( \left(2^{n-1}\pm_8 2^{\frac{n-1}{2}}\right)\left(2^{r_1} \pm_8 2^{\frac{r_1-1}{2}}\right) + \left(2^{n-1}\mp_8 2^{\frac{n-1}{2}} \right) \left(2^{r_1-1}\mp_8 2^{\frac{r_1-1}{2}} \right) \right) + o(1) \\
&= \frac{1}{2^{r_1+r_2-1}} \cdot 2 \cdot \frac{1}{4} \left( \left(2^{n-1}-2^{\frac{n-1}{2}}\right)\left(2^{r_1}-2^{\frac{r_1-1}{2}}\right) + \left(2^{n-1}+2^{\frac{n-1}{2}} \right) \left(2^{r_1-1}+2^{\frac{r_1-1}{2}} \right) \right) +o(1)\\
&= \frac{1}{2^{r_1+r_2-1}} \cdot 2 \cdot \frac{1}{4} \left( 2 \cdot \left(2^{n+r_1-2}+2^{\frac{n+r_1-2}{2}} \right) \right) + o(1)\\
&= 2^{r_1+r_2-1}+1 + o(1). &&
\end{flalign*}

This demonstrates Theorem \ref{main theorem} for ${\rm Avg}(\SS;\Cl(\cdot)[2])$. To prove Theorem \ref{main theorem} for ${\rm Avg}(\SS;\Cl(\cdot)^{+}[2])$, the setup is the same as above, except that we now sum over $\delta_{\gg 0} = (11\cdots 1) \in \mathbb{R}^{r_1} \times \mathbb{C}^{r_2}$ instead of the entire collection $\mathcal{T}(r_2)$. Otherwise, using the statement about the Hasse--Witt invariant of forms $A$ with $\chi_A(\delta_{\gg 0}) \neq 0$ from Proposition \ref{prop: distribution of archimedean mass} the computation is analogous and we find:  
\[
\frac{\sum\limits_{\substack{\mathcal{O} \in \Sigma \\ H_{\text{naive}}(\mathcal{O}) < X}} 2^{r_2} \left| \Cl^+(\mathcal{O})[2] \right| - \left| \I_2(\mathcal{O}) \right|}{\sum\limits_{\substack{\mathcal{O} \in \Sigma \\ H_{\text{naive}}(\mathcal{O}) < X}} 1} \,\,\, \lesssim \,\,\, 2^{r_2}+\frac{2^{r_2}}{2^{\frac{n-1}{2}}} + o(1).
\]

Thus we obtain the following bounds/conditional averages for the class group and the narrow class group of acceptable families $\SS \subset \mathfrak{R}_{1,\max}^{r_1,r_2}$ of monogenised fields: \[{\rm Avg} \left(\Sigma, \Cl(\cdot)[2] \right) \,\,\, \lesssim \,\,\, 2^{r_2}+\frac{2^{r_2}}{2^{\frac{n-1}{2}}} \hspace{5ex} \text{ and } \hspace{5ex}  {\rm Avg} \left(\Sigma, \Cl(\cdot)[2] \right)\,\,\, \lesssim \,\,\, 1 + \frac{1}{2^{\frac{n-1}{2}}}+\frac{1}{2^{r_2}},\] thereby completing the proof of Theorem \ref{main theorem}.

\subsection{Proof of Corollary \ref{main corollary}}

Items 1) and 3) follow from Theorem \ref{main theorem} in the same way as \cite[Corollary 6.7]{MR3782066} follows from \cite[Theorem 2 and Theorem 3]{MR3782066}. Items 2) and 4) follow from items 1) and 3) in the same way as \cite[Theorem 6.8]{MR3782066} follows from \cite[Corollary 6.7]{MR3782066}.

\section{Averages for $N$-monogenised fields} \label{sec: averages for $N$ monogenised fields}

In this section, we explain how to prove Theorem \ref{thm: main N monogenic theorem} by using the work done in \S 2 -- \S10 in the context of rings associated with binary $n$-ic forms having any fixed leading coefficient. We begin by clarifying the definition of collections of local conditions (we require a specific ``coherence condition''). 

\subsection{Notation and coherent collections of local conditions} Fix an odd degree $n$, a leading coefficient $0 \neq N \in \Z$, and a signature $(r_{1},r_{2})$. 

We recall from the introduction that we denote by $\mathfrak{R}_{N,\max}^{r_1,r_2}$ the family of equivalence classes of rings $R_f$ corresponding to binary $n$-ic forms $f \in \Sym_n(\Z^2)$ with $r_1$ real roots and $2r_2$ complex roots, leading coefficient $N$, and with the property that: 1) $f$ is primitive; 2) $R_f$ is the maximal order in its fraction field $K_f$. 

Collections of local conditions are defined almost in the same way as before: They are subsets $S = \bigsqcup_{0 \le b < n} \prod_{p} S_{b,p} \subset \bigsqcup_{0 \le b < Nn} U_{1,b}(\widehat{\Z})$ with the property that each $S_{b,p}$ is non-empty, clopen, has boundary with measure $0$, except that we also require such collections to be ``coherent'' across the different $b$'s. By this, we mean that they must be the preimage of a subset of $F_{1}(\Z) \setminus U_{N}(\Z)$ under the quotient map $\bigsqcup_{0 \le b < Nn} U_{N,b}(\widehat{\Z}) \rightarrow F_{1}(\widehat{\Z}) \setminus U_{N}(\widehat{\Z})$.

We then have the notion of an \emph{acceptable family} of $N$-monogenised fields $\SS \subset \mathfrak{R}_{N,\max}^{r_1,r_2}$ associated to an \emph{acceptable collection} of local specifications $(\SS_p)_p$. For $p|N$, the \emph{even ramification density} of $\SS$ at $p$, $\square_p(\SS)$, is the density in $\Sigma_p$ of elements of $f_p \in \Sigma_p$ with the property that $f_p(x,1)$ is a square mod $p$ up to multiplication by $(\Z/p)^\times$, that is $f_p(x,1) = u g_p(x,1)^2$ for some polynomial $g_p(x,1)$ and some unit $u \in (\Z/p)^\times$.

\subsection{Proof of Theorem \ref{thm: main N monogenic theorem}} We now prove Theorem \ref{thm: main N monogenic theorem}. Fix $0 \le b <Nn$. By the coherence condition imposed on families of local conditions, it will be sufficient to prove the formulae of Theorem \ref{thm: main N monogenic theorem} for forms having a fixed $b$. The material in \S \ref{sec: parametrisations} had no restriction on the form being monic, and so works here. We recall the main parametrisation, due to Bhargava for \cite{MR2081442} for $n=3$ and Wood \cite{MR3187931} for general $n$. 

\begin{thm}[{\cite[Theorem 6.3]{MR3187931}}] \label{Wood parametrisation 2}
Let $n$ be odd and $T$ a PID. Take a non-degenerate primitive binary $n$-ic form $f \in U(T)$. We have a bijection between $\SL_n(T)$-orbits of pairs $(A,B) \in V(T)$ with $\pi(A,B) = f$ and equivalence classes of pairs $(I,\delta)$ where $I \subset K_f$ is a fractional ideal of $R_f$ and $\delta \in K_f^\times$ such that $I^2 \subset (\delta) I_{f}(n-3)$ as ideals and $f_{0}^{n-3}N(I)^2 = N(\delta)$. The classes $(I,\delta)$ and $(I',\delta')$ are equivalent if there exists a $\kappa \in K_f^\times$ with the property that $I = \kappa I'$ and $\delta = \kappa^2 \delta$. Furthermore, the forms $A$ and $B$ have simple descriptions. They are, respectively, the bilinear forms $\check{\zeta}_{n-1}(xy/\delta)$ and $\check{\zeta}_{n-2}(xy/\delta)$ on $I_{f}(n-3)$ expressed in a common basis for $I$, where we write $ \{ \check{\zeta}_{0}, \check{\zeta}_{1}, \ldots, \check{\zeta}_{n-1} \}$ for the dual basis of $R_{f}$. 
\end{thm}

The matrix $A$ has a natural interpretation. 

\begin{prop}\label{Euler}
Let $D$ be a field of characteristic $0$, $\Z_p$ or $\Z$ with fraction field $K$. Let $f$ be a non-degenerate polynomial of degree $n$ with coefficients in $D$ and leading coefficient equal to $f_0$ and let $K_{f} := K[\theta]/f(\theta,1)$. If $(A,B)$ is the pair corresponding to $(I,\alpha)$, then $A$ is the matrix representation of the bilinear form $\Tr \left(\frac{xy}{\alpha f'(\theta)} \right)$ in some basis of $I$. 
\end{prop} 
\begin{proof}
The proof follows a computation of Euler. We follow Serre's presentation in \cite[pg. 56]{MR554237}. Write $f(X) = f_0(X-x_1)\cdots (X-x_n)$ over the algebraic closure of $T$ or its fraction field and express, using an expansion into partial fractions, $\frac{1}{f(X)} = \sum_{k=1}^n \frac{1}{f'(x_k)(X-x_k)}$. Expand both sides as a power series in $\frac{1}{X}$ to find: 
\[\frac{1}{f_0 X^n} \prod_{k=1}^n \sum_{j \ge 0} \left(\frac{x_k}{X} \right)^j = \sum_{k=1}^n \sum_{j \ge 0} \frac{1}{f'(x_k)X} \left( \frac{x_k}{X} \right)^j.\]
Note that the lowest degree term on the left-hand side is $\frac{1}{f_0X^n}$. Equate terms on both sides to find:
\[\Tr \left( \frac{\theta^k}{f'(\theta)} \right) = \begin{cases}
\frac{1}{f_0} &\text{ if } k=n-1 \\
0 & \text{ if } k < n-1
\end{cases}.
\]
This shows that $\Tr \left(\frac{xy}{\alpha f'(\theta)} \right) = \check{\zeta}_{n-1}\left( \frac{xy}{\alpha} \right)$, completing the proof. 
\end{proof}

Since maximality is a local condition and $\mathfrak{R}_{N,\max}^{r_{1},r_{2}}$ consists of maximal rings, we obtain the following local restrictions on the matrices $A$ that appear in our count. 

\begin{prop} \label{prop: one p block}
Let $f \in U_N(\Z_p)$ be maximal at $p$ and suppose that $p^k \parallel N$. Then, for any $(A, B) \in V(\Z_p)$ with $\pi(A, B) = f$, the Jordan decomposition of $A$ at $p$ contains exactly one block of scale $p^0$ of dimension $n-1$ and exactly one block of scale $p^k$ of dimensions $1$. 
\end{prop}
\begin{proof}
$\dim_{\F_p}\textrm{rad}(\overline{A}) \ge 2$ implies that $\pi(A,B)$ is not maximal at $p$. 
\end{proof}

\begin{rem} \label{rem: diagonal at 2}
    We furthermore note that by \cite[Theorem 80]{Siadeven}, if the resolvent of a pair $(A, B)$ over $\Z_{2}$ is maximal and unramified modulo $2$, then the mod $2$ reduction of $A$ is diagonalisable. 
\end{rem}

Let $\mathscr{L}^{N}_{\Z}$ be the set of $\SL_{n}(\Z)$ classes of integral symmetric matrices of determinant $\factor N$ with the property that for all $p$, the mod $p$ reduction has rank at least $n - 1$ and the mod $2$ reduction is diagonalizable.

Let $A \in \mathscr{L}_{\Z}$, $0 \le b < Nn$, $(r_{1},r_{2})$ be a signature, and $\delta \in \mathcal{T}(r_{2})$.  Determining the averages then reduces to counting integral points in $V_{A,b}^{\delta,r_{2}}(\R)$ of bounded height, subject to local conditions. The work of \S \ref{sec: parametrisations} - \S \ref{sec: product of local volumes} immediately yields the following theorem. 

\begin{thm} \label{asymptotic formula}
Let $r_2$ be an integer such that $0 \le r_2 \le \frac{n-1}{2}$ and let $\delta \in \mathcal{T}(r_2)$. Then for an acceptable collection of local specifications $\Lambda_{A,b}^{\delta}$, we have 
\begin{equation*}
N(\mathcal{V}(\Lambda_{A,b}^\delta), X) \le \frac{1}{\sigma(r_2)} \Vol(\mathcal{F}_A \cdot R_{A,b}^{r_2,\delta}(X)) \prod \limits_p \Vol(\Lambda_{A,b,p}) + o\left(X^{\frac{n(n-1)}{2}-1}\right),
\end{equation*}
where the volumes of subsets of $V_{A,b}(\mathbb{R})$ are computed with respect to the Euclidean measure normalised so that $V_{A,b}(\mathbb{Z})$ has covolume $1$. The volumes of subsets of $V_{A,b}(\mathbb{Z}_p)$ are computed with respect to the Euclidean measure normalised so that $V_{A,b}(\mathbb{Z}_p)$ has measure $1$. This is an equality assuming a tail estimate of the form of Conjecture \ref{conj: conjectural tail estimate} holds.
\end{thm}

We define the local masses in the same manner as before. By the classification of Theorem \ref{genera computation}, the final computation will depend on the number of local orbits whose associated $A$ has a given Hasse--Witt symbol.  

\begin{lem}[Dichotomy in the behaviour of Hasse--Witt symbol] \label{dichotomy}
Let $N = m^2k$ where $k$ is squarefree. Let $f \in U_{N}(\Z_p)$ and suppose that $R_f$ is maximal at $p$. Recall that the set of $\SL_n(\Z_p)$-orbits in $\pi^{-1}(f)$ is in bijection with $(R_f^\times/(R_f^\times)^2)_{N \equiv 1}$. Define $A_\delta := \Tr \left( xy/\delta f'(\theta) \right)$ for $\delta \in (R_f^\times/(R_f^\times)^2)_{N \equiv 1}$. Let $p$ be an odd prime.

\begin{enumerate}[1)]

\item If $p \nmid k$, then $e_p(A_{\delta}) = 1$ for all $\delta \in (R_f^\times/(R_f^\times)^2)_{N \equiv 1}$.

\item If $p \mid k$, the following dichotomy holds: 
\begin{enumerate}[a)]
\item If $f$ is evenly ramified, $e_p(A_{\delta})= \left(\factor,  N\right)_p \left(-1,(-1)^{{(n-1)/2 \choose 2}} \right)_p$  for all $ \delta \in (R_f^\times/(R_f^\times)^2)_{N \equiv 1}$.
\item Otherwise, $e_p(A_{\delta})$ is equal to $1$ and $-1$ equally often.  
\end{enumerate} 
\end{enumerate}
\end{lem}
\begin{proof}
\textit{1)} Suppose $p \nmid k$ and $p  \neq 2$ is odd. Then the $p$-adic valuation of $N$ is even, $2 \mid v_p(N)$. By inspecting canonical forms (see \cite[Chapter 8 Theorem 3.1.]{MR522835}), we find that all symmetric bilinear forms over $\Z_p$ of determinant $\factor N$ which are either unimodular or whose Jordan decomposition contains exactly one block of scale $p^0$ of dimension $n-1$ and exactly one block of scale $p^{v_p(N)}$ of dimensions $1$ have Hasse--Witt symbol equal to $1$. In particular, under the hypotheses of 1), $e_p(A_{\delta}) = 1$ for all $\delta \in (R_f^\times/(R_f^\times)^2)_{N \equiv 1}$ as required. 

\textit{2)} Suppose that $p \mid k$ and $p  \neq 2$ is odd. Then $v_p(N)$ is odd.
Using Hensel's lemma, we can factor $f(x,y)$ over $\Z_p$ as $f(x,y)= h_1(x,y)h_2(x,y)$ with $h_1(x,y)$ having leading coefficient $N$, $h_2$ having leading coefficient $1$, $h_1(x,y)$ and $h_2(x,y)$ co-prime modulo $p$, and such that $h_1(x,y)$ reduces to a unit times $y^{\deg(h_1)}$ modulo $p$, see \cite[proof of Proposition A.2]{2207.05592}. In addition, since  $h_1(x,y)$ and $h_2(x,y)$ are co-prime modulo $p$ we find $R_f \cong R_{h_1} \times R_{h_2}$, see \textit{loc. cit.}.  In particular,  we may decompose $R_f^\times/(R_f^\times)^2 \cong \left( R_{h_1}^\times /(R_{h_1}^\times)^2 \right) \times \left(R_{h_2}^\times / (R_{h_2}^\times)^2 \right)$ whence $A_\delta = A_{\delta_1} \perp \widetilde{A_{\delta_2}}$ for each $\delta = (\delta_1,\delta_2) \in R_f^\times/(R_f^\times)^2$.  Proposition \ref{Euler}, then gives that $\det(A_1) \in p\Z_p$ and $\det(\widetilde{A_{\delta_2}}) \in \Z_p^\times$.  We can now begin dealing with our cases. 

\textit{a)} Suppose that $f$ is evenly ramified and write $\Z_p^\times/(\Z_p^{\times})^2 = \{1,r\}$. Then the norm map $N \colon R_{h_2}^\times/(R_{h_2}^\times)^2 \rightarrow \Z_p^\times/(\Z_p^{\times})^2$ has image $\{1\}$. In particular, $(R_f^\times/(R_f^\times)^2)_{N \equiv 1} \cong (R_{h_1}^\times/(R_{h_1}^\times)^2)_{N \equiv 1} \times (R_{h_2}^\times/(R_{h_2}^\times)^2)_{N \equiv 1} \cong \{1 \} \times (R_{h_2}^\times/(R_{h_2}^\times)^2)_{N \equiv 1}$.  We can then express $A_{(1,\delta_2)} = A_1 \perp \widetilde{A_{\delta_2}}$, with $\det(A_1) \in p\Z_p$ and $\det(\widetilde{A_{\delta_2}}) \in \Z_p^\times$, using this decomposition.  Computing the Hasse--Witt symbol of $A_{(1,\delta_2)}$ we find that it is constant across all $(1,\delta_2) \in (R_f^\times/(R_f^\times)^2)_{N \equiv 1}$.  Indeed:  
\begin{align*}
e_p(A_{(1,\delta_2)}) &= e_p(A_1) e_p(\widetilde{A_{\delta_2}}) ( \det A_1, \det\widetilde{A_{\delta_2}})_p \\ 
&= e_p(A_1) e_p(\widetilde{A_1}) ( \det A_1,  N(\delta_2) \det\widetilde{A_1})_p \\ 
&=  e_p(A_1) e_p(\widetilde{A_1}) ( \det A_1, \det\widetilde{A_1})_p \\ 
&= e_p(A_{(1,1)})
\end{align*}
where we used the fact that $\det\widetilde{A_{\delta_2}} = N(\delta_2) \det\widetilde{A_1}$ which is immediate from Proposition \ref{Euler}. An explicit computation with the basis $\langle1,\theta,\ldots,\theta^{\frac{n-1}{2}}, \zeta_{\frac{n-1}{2}+1},\ldots,\zeta_{n-1} \rangle$ shows that $A_{(1,1)}$ is $\SL_n(\Q_p)$-equivalent to \[\begin{psmallmatrix}
& & & & & & 1 \\ 
& & & & & \iddots & \\ 
& & & & 1 & & \\ 
& & & N & & & \\  
& & 1 & & & &  \\ 
& \iddots & & & & & \\ 
1 & & & & & &  
\end{psmallmatrix}\]
which has Hasse--Witt symbol $\left(\factor,  N\right)_p \left(-1,(-1)^{{(n-1)/2 \choose 2}}\right)_p$. 

\textit{b)} It suffices to find a bijection $\Xi \colon (R_f^\times/(R_f^\times)^2)_{N \equiv 1} \rightarrow (R_f^\times/(R_f^\times)^2)_{N \equiv 1}$  with $e_p(A_{\Xi(\delta)}) = - e_p(A_\delta)$, i.e. which reverses the Hasse--Witt symbol of $A_{(\cdot)}$.  First, decompose $R_f^\times/(R_f^\times)^2 \cong \left( R_{h_1}^\times /(R_{h_1}^\times)^2 \right) \times \left(R_{h_2}^\times / (R_{h_2}^\times)^2 \right)$.  Next, fix $r \in \Z_p^\times$ be such that $r \not\in (\Z_p^\times)^2$ and find a $\delta^\star \in R_{h_2}^\times / (R_{h_2}^\times)^2$ such that $N(\delta^\star) = \frac{1}{r^n}$ (such a $\delta^\star$ exists since $f$ is not evenly ramified). We define $\Xi$ as follows: 
\[
\delta = (\delta_1,\delta_2) \mapsto (\delta_1,\delta^\star \delta_2) \mapsto (A_{\delta_1} \perp A_{\delta^\star\delta_2},B_{\delta_1} \perp B_{\delta^\star\delta_2}) \mapsto r (A_{\delta_1} \perp A_{\delta^\star\delta_2},B_{\delta_1} \perp B_{\delta^\star\delta_2}) \mapsto \Xi(\delta)
\] where the second and fourth arrow come from the orbit parametrisation and the third arrow denotes the multiplication of the elements of the orbit by the $r \in \Z_p$ we fixed earlier.  By \cite[Theorem 1.3]{MR3187931},  $(A_{\delta_1} \perp A_{\delta^\star\delta_2},B_{\delta_1} \perp B_{\delta^\star\delta_2})$ lies in $\pi^{-1}(\frac{1}{r^n} f)$, and thus $r (A_{\delta_1} \perp A_{\delta^\star\delta_2},B_{\delta_1} \perp B_{\delta^\star\delta_2})$ lies in $\pi^{-1}(f)$.  This map is a bijection since it is the composition of bijections. 

Since $n$ and $p$ are odd,  the third arrow is readily seen to preserve the Hasse--Witt symbol of $A_{(\cdot)}$.  Thus, it only remains to compare the Hasse--Witt symbols of $A_{\delta_1} \perp A_{\delta^\star\delta_2}$ and $A_{\delta_1} \perp A_{\delta_2}$. Noting that $\det A_{\delta^\star\delta_2} = N(\delta^\star) \det A_{\delta_2}$, we compute to find on the one hand: 
\[e_p(A_{\delta_1} \perp A_{\delta_2}) = e_p(A_{\delta_1}) e_p(A_{\delta_2}) (\det A_{\delta_1}, \det A_{\delta_2})_p\]
and on the other
\begin{align*}
e_p(A_{\delta_1} \perp A_{\delta^\star\delta_2}) &= e_p(A_{\delta_1}) e_p(A_{\delta^\star\delta_2})  (\det A_{\delta_1}, N(\delta^\star)\det A_{\delta_2})_p \\
&= e_p(A_{\delta_1}) e_p(A_{\delta^\star\delta_2})  \left(-(\det A_{\delta_1}, \det A_{\delta_2})_p \right) \\
&= - e_p(A_{\delta_1}) e_p(A_{\delta_2}) (\det A_{\delta_1}, \det A_{\delta_2})_p \\
&= - e_p(A_{\delta_1} \perp A_{\delta_2}) 
\end{align*}
where the third equality follows since $N(\delta^\star) = 1/r^n$ is not a square in $\Z_p^\times$,  $\det A_{\delta_1}$ has odd $p$-adic valuation, and the Hilbert symbol for odd primes $p$ satisfies $(\alpha p,  r \beta)_p = - (\alpha p,  \beta)_p$ for all $\alpha, \beta \in \Q_p^\times$ and $r$ a non-square in $\Z_p^\times$.  The fourth line follows since $e_p(A_{\delta^\star\delta_2}) = 1 = e_p(A_{\delta_2})$ since both $A_{\delta^\star\delta_2}$ and $A_{\delta_2}$ are unimodular $\Z_p$ bilinear forms for $p \neq 2$. Therefore, the bijection $\Xi$ we have constructed reverses the Hasse--Witt symbol for $A_{(\cdot)}$ as required.   
\end{proof}

By the classification of quadratic forms, for odd primes $p$, there is a unique $\SL_{n}(\Z_{p})$ class of $\Z_{p}$-integral quadratic forms having a given determinant indivisible by $p$. If we ask for the given determinant to be divisible by $p$ and the mod $p$ reduction to be of rank $n-1$, there are precisely two such classes. Suppose an even power of $p$ exactly divides the given determinant. In that case, both classes have Hasse--Witt symbol $1$, while if an odd power of $p$ exactly divides the given determinant, they are distinguished by their Hasse--Witt invariant. When $p = 2$, there can be many forms with a determinant divisible by $2$ and with a diagonalisable mod $2$ reduction. 

Because the final computation depends only on the distribution of the total masses, which are the same as in \S \ref{sec: product of local volumes}, across forms of a certain Hasse--Witt symbol, it suffices to obtain the value of the following averages. For $S_{b,p}  \subset U_{A,b}(\Z_{p})$, define
\[c_{p}(S_{b,p},+1) := \frac{\int\limits_{S_{b,p}}{ \sum\limits_{\substack{A \in \frac{\Sym_2(\Z_{p}^n)}{\SL_n(\Z_{p})}, \\ \det(A)= \factor N, \\ e_p(A) = +1 }}} m_{p}(f,A)}{\Vol(S_{b,p})}.\]
We define $c_{p}(S_{b,p},-1)$ similarly, replacing the condition $e_p(A) = + 1$ in the sum by $e_p(A) = -1$. 

The exact values for $c_{p}(S_{b,p},\pm 1)$ in the cases of interest are given in the following theorem. 

\begin{thm} 
For each prime $p \mid 2N$, let $S_{b,p} \subset U_{A,b}(\Z_{p})$ be a local condition at $p$ consisting only of maximal forms. Furthermore, suppose that $S_{2,b}$ is defined by local conditions mod $2$ and only consists of forms which are unramified at $2$. Then: 

\begin{itemize}
    \item If $p$ is odd and $p \mid N$ to an even power, we have $$c_{p}(S_{b,p},+1) = 1 \hspace{5ex} \text{ and } \hspace{5ex} c_{p}(S_{b,p},-1) = 0.$$ 
    \item If $p$ is odd and $p \mid N$ to an odd power, we have $$c_{p}(S_{b,p},\pm 1) =  \frac{1}{2} \pm \frac{1}{2} \left(\factor,  N \right)_p \left(-1,(-1)^{{(n-1)/2 \choose 2}}\right)_p\square_p(S_{p,b})$$ where $\square_p(S_{p,b})$ denotes the even ramification density of $S_{p,b}$. 
    \item If $2$ divides $N$ to an even power, $$c_{2}(S_{b,2}, 1) = \frac{1}{2}\left(2^{n-1} \pm_8 2^{\frac{n-1}{2}} \right) \hspace{5ex} \text{ and } \hspace{5ex} c_{2}(S_{b,2}, 1) = \frac{1}{2}\left(2^{n-1} \mp_8 2^{\frac{n-1}{2}} \right)$$ where $\pm_8$ is $+$ if $n$ is congruent to $1,3 \pmod 8$ and $-$ otherwise. 
    \item If $2$ divides $N$ to an odd power, $$c_{2}(S_{b,2}, \pm 1) = \frac{1}{2} 2^{n-1}.$$  
\end{itemize}
\end{thm}
\begin{proof}
For odd $p$, this follows directly from Lemma \ref{dichotomy}. 

For $p = 2$, Remark \ref{rem: diagonal at 2} shows that all the forms $A$ associated to $f \in S_{2,b}$ are diagonal because $S_{2,b}$ have the rank $n-1$ modulo $2$. In particular, they are all equal modulo $2$. 

An argument similar to the one in \S \ref{sec: 2-adic distribution} shows that it suffices to look at the ratios between the reciprocals of $\Vol(\SO_{A}(\Z_{2}))$ for $A$ ranging in a set of canonical forms having determinant $\factor N$ and rank $n-1$ modulo $2$. Using \cite[Theorem 1 and Lemmas 1, 2 and 3]{MR12640} to find such a set of canonical representatives and \cite[\S 5]{MR965484} to compute the volumes of their respective special orthogonal groups gives these ratios and the final result.
\end{proof}

The archimedean local masses are the same as before. Using the following lemma of Hanke and doing a mass calculation similar to the one done in \S \ref{sec: proof of main theorem} yields Theorem \ref{thm: main N monogenic theorem}.

\begin{lem}[{\cite[Lemma 4.17]{1108.3580}}]
Let $\mathbb{T}$ be a finite set and $X_{i}, Y_{i}$ be variables indexed by the elements of $\mathbb{T}$. Then
\[\sum_{\substack{(\epsilon_{i}) \in \{\pm 1\}^{\mathbb{T}} \\ \prod_{i \in \mathbb{T}} \epsilon_{i} = 1}} \left(\prod_{i \in \mathbb{T}} X_{i}+\epsilon_{i} Y_{i} \right) = 2^{\mathbb{T}-1} \left(\prod_{i \in \mathbb{T}} X_{i} + \prod_{i \in \mathbb{T}} Y_{i}\right). \]
\end{lem}

\noindent This gives us the following bounds/conditional averages for acceptable families $\SS \subset \mathfrak{R}_{N,\max}^{r_1,r_2}$:
\[{\rm Avg}(\SS,\Cl(\cdot)[2]) \,\, \lesssim \,\,  1+ \frac{1+ \prod_{p|k}\square_p(\SS)}{2^{r_1+r_2-1}} \hspace{5ex} \text{ and } \hspace{5ex} {\rm Avg}(\SS,\Cl^+(\cdot)[2]) \,\, \lesssim \,\, 1 + \frac{\prod_{p|k}\square_p(\SS)}{2^{\frac{n-1}{2}}}+\frac{1}{2^{r_2}},\]
which completes the proof of Theorem \ref{thm: main N monogenic theorem}. \\

It seems plausible that we could improve Lemma \ref{dichotomy} to cover the case $p = 2$ by cleverly applying the formula for the second Stiefel--Whitney classes of twisted trace forms \cite[Th\'eor\`eme 1']{SerreWittInvariant84}.
Presumably, this would also give a different proof of Proposition \ref{prop: mod 2 masses pointwise}. \\

The proof of Theorem \ref{thm: main N monogenic theorem} we obtained in this section offers a new perspective on the intricate local mass calculations of \cite{BhargavaHankeShankar} where class field theory played a central role. In the approach taken in this section, class field theory emerges through the formula for the Hasse--Witt invariant of orthogonal direct sums of quadratic forms. At its core, this formula comes from the bilinearity of the Hilbert symbol, a product of local class field theory.

\section*{Index of notation}
\addcontentsline{toc}{section}{Index of Notation}

Here is an index of the notation used, roughly arranged in the chronological order in which they first appear in the text.

\begin{longtable}{lp{13.5cm}}
$\widehat{\Z}$ &the profinite integers $\prod_p \Z_p$. \\
$U(T)$ &space of binary $n$-ic forms with coefficients in $T$. \\
$\Delta(\cdot)$ &the discriminant.\\
$\mathfrak{R}_1^n$ &isomorphism classes of monogenised $n$-ic rings. \\
$\mathfrak{R}_{N,\max}^n$ &isomorphism classes of $N$-monogenised $n$-ic fields. \\ 
$\Cl(\O)$ & class group of the order $\O$. \\ 
$\Cl^+(\O)$ & narrow class group of $\O$. \\
$\mathcal{I}_2(\O)$ & $2$-torsion subgroup of the ideal group of the order $\O$. \\ 
$H_{\text{naive}}$ &the naive height on binary form or monic polynomials. \\
$H_{\text{roots}}$ & box-height on the roots of monic polynomials. \\ 
$e_p(\cdot)$ & the Hasse--Witt invariant of a quadratic form.\\
$\mathscr{L}_\Z$ & set of representatives of equivalence classes $n$-ary integer-matrix quadratic forms of \\
& determinant $\factor$ under the action of $\SL_n(\mathbb{Z})$. \\
$\mathcal{G}_{\Z,N}$ & collection of genera of integral quadratic forms of discriminant $\factor N$. \\
$H(\O)$ &the set of pairs $(I,\delta)$ consisting of a fractional ideal $I \subset \mathcal{O}$ and an element $\delta \in K^\times$, where $K$ is the field of fraction of $\O$, such that $I^2 \subset (\delta)$, $N(I)^2 = N(\delta)$ and such that the ideal $I$ is projective. It is a group under component-wise multiplication. \\ 
$H^+(\O)$ &the subgroup of $H(\O)$ consisting of pairs $(I,\delta)$ with $\delta$ having the property that its image under any real embedding of $K$ is positive.  \\
$V(T)$ & $T^2 \otimes \Sym_2 (T^n)$ the space of pairs of symmetric $n \times n$ matrices with coefficients in $T$. This space appears variously decorated in the text. For example $V_{A,b}^{r_2,\delta}(\R)$ denotes the space of pairs $(A,B)$ whose resolvent polynomial has subleading coefficient equal to $b$, $r_2$ pairs of conjugate complex roots, and is in the real orbit labelled $\theta \in \mathcal{T}(r_2)$ in \S \ref{subsec: decorations on V}.\\
$R_f$ & the ring $R_f$ attached to a binary form $f$. Defined in \S \ref{sec: parametrisations}. \\
$N(L;X)$ &for $L \subset V_{A,b}^{r_2,\delta}(\mathbb{Z}) := V_{A,b}^{r_2,\delta} (\mathbb{\mathbb{R}}) \cap V_{A,b}(\mathbb{Z})$ an $\SO_A(\mathbb{Z})$-invariant set, the number of absolutely irreducible $\SO_A$ orbits in $L$ of height at most $X$.  \\ 
$\T(r_2)$ &the set of tuples in $\{\pm 1\}^{r_1} \times \{1 \}^{r_2}$ with an even number of $-1$ entries. Elements of $\mathcal{T}(r_2)$ are denoted by $\delta$.\\ 
$\mathcal{W}_A$ & the bilinear space of rank $n$ with associated Gram matrix $A$.\\
$R_{A,b}^{r_2,\delta}$ &fundamental set for the action of $\SO_A(\mathbb{R})$ on $V_{A,b}^{r_2,\delta}(\mathbb{R})$ satisfying the properties of Proposition \ref{prop: good fundamental domain}. \\
$\FF_A$ &fundamental domain for the action of $\SO_{A}(\Z)$ on $\SO_{A}(\R)$. \\
$\sigma(r_2)$ &shorthand for $2^{r_1+r_2-1}$, the size of the stabilizer in $\SO_A(\R)$ of elements in $V_{A,b}^{r_2}(\R)$. \\
$\chi_{A,b}^{r_2}(\delta)$ &indicator symbol recording whether $V_{A,b}^{r_2,\delta}(\mathbb{R})$ is empty or not. \\
$L_{A,b}$ & the restriction of lattice in $L \subset V_A(\Z)$ to $V_{A,b}(\R)$. \\
$A_{pq}$ &canonical form for $A$ over $\Q$ chosen in \S \ref{subsec: choice of FA and haar measure on SOA}, $g_A^tAg_A = A_{pq}$ for some $g_A \in \SL_n(\Q)$.  \\
$NTK$ & the Iwasawa decomposition of $\SO_{A_{pq}}(\R)$. \\
$\sigma_V$ &for $K = \mathbb{R}$ or $\mathbb{Q}$, the map $\sigma_V \colon V_{A,b}^{r_2,\delta}(K) \rightarrow V_{A_{pq},b}^{r_2,\delta}(K)$ given by $\sigma_V(A,B) = (A_{pq},g_A^tBg_A)$. \\
$\sigma_A$ &for $K = \mathbb{R}$ or $\mathbb{Q}$, the map $\sigma_A \colon \SO_A(K) \rightarrow \SO_{A_{pq}}(K)$ given by  $\sigma_A(h) = g_A^{-1} h g_A$. \\
$\mathcal{L}$ &the subset $\sigma_{V}(L \cap  V_{A_,b}^{r_2,\delta}(\mathbb{R}))$ of $\sigma_V(V_{A,b}^{r_2,\delta}(\mathbb{Z}))$. \\
$\mathcal{L}^{\text{irr}}$ &the subset $\sigma_{V}(L^{\text{irr}} \cap  V_{A_,b}^{r_2,\delta}(\mathbb{R}))$ of $\sigma_V(V_{A,b}^{r_2,\delta}(\mathbb{Z}))$. \\
$\Gamma$ &the subgroup $\sigma_A(\SO_{A}(\mathbb{Z}))$ of $\SO_{A_{pq}}(\mathbb{R})$. \\
$\FF_{A_{pq}}$ &fundamental domain for the action of $\Gamma$ on $\SO_{A_{pq}}(\mathbb{R})$ contained in a finite union of $\SO_{A_{pq}}(\mathbb{Q})$ translates of a Siegel domain $\mathcal{S}$, $\FF_{A_{pq}} \subset \cup_\ell g_\ell \mathcal{S}$ for $g_\ell \in \SO_{A_{pq}}(\mathbb{Q})$. \\
$\mathcal{S}$ &a Siegel domain in $\SO_{A_{pq}}(\R)$ used in the definition of $\FF_{A_{pq}}$. \\
$g_\ell$ &element of finite list of elements in $\in \SO_{A_{pq}}(\mathbb{Q})$ used in the definition of $\FF_{A_{pq}}$. \\
$\widetilde{C}_\ell$ &the minimum absolute value of the non-zero entries of elements of $g_\ell^{-1} \sigma_{V}(V_{A,b}(\Z))$. \\
$\FF_{A_{pq}}'$ &the set of $h \in \FF_{A_{pq}}$ such that for some $\ell$, $\lvert b_{11}(g_\ell^{-1}v)\rvert < \widetilde{C_\ell}$ for all $v \in hG_{0}\sigma_{V}(R_{A,b}^{r_{2},\delta})(X)$. We used it to define the cuspidal part of integral \eqref{eqn: averaged count}. \\
$\E$ &the coordinates on $V_{A_{pq},b}$ given by the matrix entries $\{b_{ij}\}$ \\ &for $1 \le i \le j \le n$ and $(i,j) \neq ((n-1)/2,(n-1)/2)$. \\
$w(b_{ij})$ &the weight $w(b_{ij})$ of an element $b_{ij} \in \mathcal{E}$. \\
$\widetilde{I}(\E_1)$ &the active integral of a subset $\E_1 \subset \mathcal{\E}$ defined in \S \ref{subsec: proof of cusp cutting estimate}. \\
$I(\E_1)$ &normalisation of the active integral defined in \S \ref{subsec: proof of cusp cutting estimate}. \\
$\Lambda_{A,b}^\delta$ &a collection of local specifications $\Lambda_{A,b}^{\delta}  = \prod_{p} \Lambda_{A,b,p} \times V_{A,b}^{r_2,\delta}(\R) \subset V_{A,b}(\widehat{\Z}) \times V_{A,b}(\R)$, for some $r_{2}$ and $\delta \in \mathcal{T}(r_{2})$. \\
$\V(\Lambda_{A,b}^\delta)$ &the preimage of $\Lambda_{A,b}$ under the diagonal embedding $V_{A,b}^{r_{2},\delta}(\Z) \hookrightarrow V_{A,b}(\widehat{\Z}) \times V_{A,b}(\R)$. \\
$\omega_U$ &the Euclidean measure on $U_{A,b}$ normalised so that $U_{A,b}(\mathbb{Z})$ has covolume $1$.  \\
$\omega_V$ &the Euclidean measure on $V_{A,b}$ normalised so that $V_{A,b}(\mathbb{Z})$ has covolume $1$.  \\
$\omega_A$ &choice of volume form on $\SO_A$ made in \S \ref{subsec: choice of volume forms} following Tamagawa \cite[\S 7, \S 8]{MR0212025} or Hanke \cite[\S 2]{MR2155083}. \\
$m_p(f,A)$ & the local mass of $f$ at $p$ in $A$. \\
$\J_A$ &the Jacobian in the change of measure formula Proposition \ref{change of measure formula}. It is a rational number which does not depend on $b$. \\
$m_\infty(r_2,A)$ & the archimedean mass of $f$ in $A$. \\
$\mathfrak{M}_{\pm 1}$ & representatives of the equivalence classes of symmetric bilinear forms of determinant $\factor$ which coincide modulo $2$. The $\pm 1$ subscript indicates the value of their respective Hasse--Witt invariant. \\
$\pm_8$ &is $+$ if $n$ is congruent to $1, 3 \pmod 8$ and $-$ otherwise. \\
$\mp_8$ &is the negative of $\pm_8$. \\
$c_2(S_{2,b}, \mathfrak{M}_{\pm 1})$ & average of $m_2(f,\mathfrak{M}_{\pm 1})$ over $S_{2,b} \subset U_{1}(\Z_2)$. \\
$\eta'$ & restriction of the quadratic form $\eta$ of \cite[\S 6.2]{MR3156850} to $R_f^\times/(R_f^\times)^2$ for $f \in U_1(\Z_2)$ maximal.\\
$c_{\infty,\pm 1}$ & sum of the masses $m_{\infty}(r_{2},\cdot)$ over all $\SL_n(\R)$-equivalence classes of real symmetric matrices of determinant $\factor$ which have Hasse--Witt symbol of $1$ and $-1$ respectively.\\
$\delta_{\gg 0}$ & the element $(11\cdots 1 1) \in \T(r_2)$. \\
$\lesssim$ &an inequality which becomes an equality if $\Sigma$ is very large or if $\Sigma$ is acceptable and the tail estimate of Conjecture \ref{conj: conjectural tail estimate} holds. \\
$\tau(\cdot)$ & the Tamagawa number. \\
$c_p(S_{b,p},\pm 1)$ &average of the sum of masses at $p$ of $f$ in equivalence classes of forms having Hasse--Witt symbol $\pm 1$ respectively over $S_{b,p} \subset U_{N}(\Z_p)$. \\
\end{longtable}

\bibliographystyle{amsalpha}
\bibliography{bibfile}

@article {MR1680530,
    AUTHOR = {Leep, David B. and Schueller, Laura Mann},
     TITLE = {Zeros of a pair of quadratic forms defined over a finite
              field},
   JOURNAL = {Finite Fields Appl.},
  FJOURNAL = {Finite Fields and their Applications},
    VOLUME = {5},
      YEAR = {1999},
    NUMBER = {2},
     PAGES = {157--176},
      ISSN = {1071-5797,1090-2465},
   MRCLASS = {11T30 (11H99)},
  MRNUMBER = {1680530},
MRREVIEWER = {Renaud\ Coulangeon},
       DOI = {10.1006/ffta.1998.0232},
       URL = {https://doi.org/10.1006/ffta.1998.0232},
}

@article {SerreWittInvariant84,
    AUTHOR = {Serre, Jean-Pierre},
     TITLE = {L'invariant de {W}itt de la forme {${\rm Tr}(x^2)$}},
   JOURNAL = {Comment. Math. Helv.},
  FJOURNAL = {Commentarii Mathematici Helvetici},
    VOLUME = {59},
      YEAR = {1984},
    NUMBER = {4},
     PAGES = {651--676},
      ISSN = {0010-2571,1420-8946},
   MRCLASS = {11R32 (11E81)},
  MRNUMBER = {780081},
MRREVIEWER = {Donald\ Maurer},
       DOI = {10.1007/BF02566371},
       URL = {https://doi.org/10.1007/BF02566371},
}

@article {MR3264671,
    AUTHOR = {Barroero, Fabrizio and Widmer, Martin},
     TITLE = {Counting lattice points and {O}-minimal structures},
   JOURNAL = {Int. Math. Res. Not. IMRN},
  FJOURNAL = {International Mathematics Research Notices. IMRN},
      YEAR = {2014},
    NUMBER = {18},
     PAGES = {4932--4957},
      ISSN = {1073-7928},
   MRCLASS = {11H31 (03C64)},
  MRNUMBER = {3264671},
MRREVIEWER = {B\'{e}la Uhrin},
       DOI = {10.1093/imrn/rnt102},
       URL = {https://doi.org/10.1093/imrn/rnt102},
}

@article {MR2081442,
    AUTHOR = {Bhargava, Manjul},
     TITLE = {Higher composition laws. {II}. {O}n cubic analogues of {G}auss
              composition},
   JOURNAL = {Ann. of Math. (2)},
  FJOURNAL = {Annals of Mathematics. Second Series},
    VOLUME = {159},
      YEAR = {2004},
    NUMBER = {2},
     PAGES = {865--886},
      ISSN = {0003-486X},
   MRCLASS = {11E76 (11R16 11R29)},
  MRNUMBER = {2081442},
MRREVIEWER = {A. G. Earnest},
       DOI = {10.4007/annals.2004.159.865},
       URL = {https://doi.org/10.4007/annals.2004.159.865},
}

@article {MR2113024,
    AUTHOR = {Bhargava, Manjul},
     TITLE = {Higher composition laws. {III}. {T}he parametrization of
              quartic rings},
   JOURNAL = {Ann. of Math. (2)},
  FJOURNAL = {Annals of Mathematics. Second Series},
    VOLUME = {159},
      YEAR = {2004},
    NUMBER = {3},
     PAGES = {1329--1360},
      ISSN = {0003-486X},
   MRCLASS = {11R16 (11E12 11E20)},
  MRNUMBER = {2113024},
MRREVIEWER = {A. G. Earnest},
       DOI = {10.4007/annals.2004.159.1329},
       URL = {https://doi.org/10.4007/annals.2004.159.1329},
}

@article {MR2183288,
    AUTHOR = {Bhargava, Manjul},
     TITLE = {The density of discriminants of quartic rings and fields},
   JOURNAL = {Ann. of Math. (2)},
  FJOURNAL = {Annals of Mathematics. Second Series},
    VOLUME = {162},
      YEAR = {2005},
    NUMBER = {2},
     PAGES = {1031--1063},
      ISSN = {0003-486X},
   MRCLASS = {11R45 (11R16 11R29)},
  MRNUMBER = {2183288},
MRREVIEWER = {F. Diaz y Diaz},
       DOI = {10.4007/annals.2005.162.1031},
       URL = {https://doi.org/10.4007/annals.2005.162.1031},
}

@incollection {MR3156850,
    AUTHOR = {Bhargava, Manjul and Gross, Benedict H.},
     TITLE = {The average size of the 2-{S}elmer group of {J}acobians of
              hyperelliptic curves having a rational {W}eierstrass point},
 BOOKTITLE = {Automorphic representations and {$L$}-functions},
    SERIES = {Tata Inst. Fundam. Res. Stud. Math.},
    VOLUME = {22},
     PAGES = {23--91},
 PUBLISHER = {Tata Inst. Fund. Res., Mumbai},
      YEAR = {2013},
   MRCLASS = {11G30 (11G05 14G05 14H40 14H55)},
  MRNUMBER = {3156850},
MRREVIEWER = {Remke Kloosterman},
}

@article {MR3275847,
    AUTHOR = {Bhargava, Manjul and Shankar, Arul},
     TITLE = {Ternary cubic forms having bounded invariants, and the
              existence of a positive proportion of elliptic curves having
              rank 0},
   JOURNAL = {Ann. of Math. (2)},
  FJOURNAL = {Annals of Mathematics. Second Series},
    VOLUME = {181},
      YEAR = {2015},
    NUMBER = {2},
     PAGES = {587--621},
      ISSN = {0003-486X},
   MRCLASS = {11G05 (11E20 14G05)},
  MRNUMBER = {3275847},
MRREVIEWER = {Joseph H. Silverman},
       DOI = {10.4007/annals.2015.181.2.4},
       URL = {https://doi.org/10.4007/annals.2015.181.2.4},
}

@article {MR3369305,
    AUTHOR = {Bhargava, Manjul and Varma, Ila},
     TITLE = {On the mean number of 2-torsion elements in the class groups,
              narrow class groups, and ideal groups of cubic orders and
              fields},
   JOURNAL = {Duke Math. J.},
  FJOURNAL = {Duke Mathematical Journal},
    VOLUME = {164},
      YEAR = {2015},
    NUMBER = {10},
     PAGES = {1911--1933},
      ISSN = {0012-7094},
   MRCLASS = {11R29 (11R45)},
  MRNUMBER = {3369305},
MRREVIEWER = {Th\cfac{o}ng Nguy\cftil{e}n-Quang-\Dbar \cftil{o}},
       DOI = {10.1215/00127094-3120636},
       URL = {https://doi.org/10.1215/00127094-3120636},
}

@article {MR3272925,
    AUTHOR = {Bhargava, Manjul and Shankar, Arul},
     TITLE = {Binary quartic forms having bounded invariants, and the
              boundedness of the average rank of elliptic curves},
   JOURNAL = {Ann. of Math. (2)},
  FJOURNAL = {Annals of Mathematics. Second Series},
    VOLUME = {181},
      YEAR = {2015},
    NUMBER = {1},
     PAGES = {191--242},
      ISSN = {0003-486X},
   MRCLASS = {11E20 (14G25)},
  MRNUMBER = {3272925},
MRREVIEWER = {John M. Voight},
       DOI = {10.4007/annals.2015.181.1.3},
       URL = {https://doi.org/10.4007/annals.2015.181.1.3},
}

@article {MR3471250,
    AUTHOR = {Bhargava, Manjul and Varma, Ila},
     TITLE = {The mean number of 3-torsion elements in the class groups and
              ideal groups of quadratic orders},
   JOURNAL = {Proc. Lond. Math. Soc. (3)},
  FJOURNAL = {Proceedings of the London Mathematical Society. Third Series},
    VOLUME = {112},
      YEAR = {2016},
    NUMBER = {2},
     PAGES = {235--266},
      ISSN = {0024-6115},
   MRCLASS = {11R45 (11R11 11R29)},
  MRNUMBER = {3471250},
MRREVIEWER = {Yuuki Takai},
       DOI = {10.1112/plms/pdv062},
       URL = {https://doi.org/10.1112/plms/pdv062},
}

@article {MR4419629,
    AUTHOR = {Bhargava, Manjul and Shankar, Arul and Wang, Xiaoheng},
     TITLE = {Squarefree values of polynomial discriminants {I}},
   JOURNAL = {Invent. Math.},
  FJOURNAL = {Inventiones Mathematicae},
    VOLUME = {228},
      YEAR = {2022},
    NUMBER = {3},
     PAGES = {1037--1073},
      ISSN = {0020-9910},
   MRCLASS = {11N35 (11C08 11R09)},
  MRNUMBER = {4419629},
MRREVIEWER = {Ben Joseph Green},
       DOI = {10.1007/s00222-022-01098-w},
       URL = {https://doi.org/10.1007/s00222-022-01098-w},
}

@incollection {MR0148666,
    AUTHOR = {Borel, Armand},
     TITLE = {Ensembles fondamentaux pour les groupes arithm\'{e}tiques},
 BOOKTITLE = {Colloq. {T}h\'{e}orie des {G}roupes {A}lg\'{e}briques ({B}ruxelles,
              1962)},
     PAGES = {23--40},
 PUBLISHER = {Librairie Universitaire, Louvain; Gauthier-Villars, Paris},
      YEAR = {1962},
   MRCLASS = {20.75 (10.25)},
  MRNUMBER = {0148666},
MRREVIEWER = {J. Dieudonn\'{e}},
}

@article {MR147566,
    AUTHOR = {Borel, Armand and Harish-Chandra},
     TITLE = {Arithmetic subgroups of algebraic groups},
   JOURNAL = {Ann. of Math. (2)},
  FJOURNAL = {Annals of Mathematics. Second Series},
    VOLUME = {75},
      YEAR = {1962},
     PAGES = {485--535},
      ISSN = {0003-486X},
   MRCLASS = {20.65 (14.50)},
  MRNUMBER = {147566},
MRREVIEWER = {P. Cartier},
       DOI = {10.2307/1970210},
       URL = {https://doi.org/10.2307/1970210},
}

@book {MR522835,
    AUTHOR = {Cassels, J. W. S.},
     TITLE = {Rational quadratic forms},
    SERIES = {London Mathematical Society Monographs},
    VOLUME = {13},
 PUBLISHER = {Academic Press, Inc. [Harcourt Brace Jovanovich, Publishers],
              London-New York},
      YEAR = {1978},
     PAGES = {xvi+413},
      ISBN = {0-12-163260-1},
   MRCLASS = {10C05 (10-01 15A63)},
  MRNUMBER = {522835},
MRREVIEWER = {Charles J. Parry},
}

@incollection {MR756082,
    AUTHOR = {Cohen, H. and Lenstra, Jr., H. W.},
     TITLE = {Heuristics on class groups of number fields},
 BOOKTITLE = {Number theory, {N}oordwijkerhout 1983 ({N}oordwijkerhout,
              1983)},
    SERIES = {Lecture Notes in Math.},
    VOLUME = {1068},
     PAGES = {33--62},
 PUBLISHER = {Springer, Berlin},
      YEAR = {1984},
   MRCLASS = {11R29 (13C13)},
  MRNUMBER = {756082},
MRREVIEWER = {F. J. van der Linden},
       DOI = {10.1007/BFb0099440},
       URL = {https://doi.org/10.1007/BFb0099440},
}

@article {MR866103,
    AUTHOR = {Cohen, H. and Martinet, J.},
     TITLE = {Class groups of number fields: numerical heuristics},
   JOURNAL = {Math. Comp.},
  FJOURNAL = {Mathematics of Computation},
    VOLUME = {48},
      YEAR = {1987},
    NUMBER = {177},
     PAGES = {123--137},
      ISSN = {0025-5718},
   MRCLASS = {11R29 (11Y40)},
  MRNUMBER = {866103},
MRREVIEWER = {Duncan A. Buell},
       DOI = {10.2307/2007878},
       URL = {https://doi.org/10.2307/2007878},
}

@article {MR1037430,
    AUTHOR = {Cohen, Henri and Martinet, Jacques},
     TITLE = {\'{E}tude heuristique des groupes de classes des corps de nombres},
   JOURNAL = {J. Reine Angew. Math.},
  FJOURNAL = {Journal f\"{u}r die Reine und Angewandte Mathematik. [Crelle's
              Journal]},
    VOLUME = {404},
      YEAR = {1990},
     PAGES = {39--76},
      ISSN = {0075-4102},
   MRCLASS = {11R29},
  MRNUMBER = {1037430},
}

@article {MR1226813,
    AUTHOR = {Cohen, Henri and Martinet, Jacques},
     TITLE = {Heuristics on class groups: some good primes are not too good},
   JOURNAL = {Math. Comp.},
  FJOURNAL = {Mathematics of Computation},
    VOLUME = {63},
      YEAR = {1994},
    NUMBER = {207},
     PAGES = {329--334},
      ISSN = {0025-5718},
   MRCLASS = {11R29 (11Y40)},
  MRNUMBER = {1226813},
MRREVIEWER = {Joe P. Buhler},
       DOI = {10.2307/2153578},
       URL = {https://doi.org/10.2307/2153578},
}

@article {MR965484,
    AUTHOR = {Conway, J. H. and Sloane, N. J. A.},
     TITLE = {Low-dimensional lattices. {IV}. {T}he mass formula},
   JOURNAL = {Proc. Roy. Soc. London Ser. A},
  FJOURNAL = {Proceedings of the Royal Society. London. Series A.
              Mathematical, Physical and Engineering Sciences},
    VOLUME = {419},
      YEAR = {1988},
    NUMBER = {1857},
     PAGES = {259--286},
      ISSN = {0962-8444},
   MRCLASS = {11H55 (11E32 11H06)},
  MRNUMBER = {965484},
MRREVIEWER = {O. H. K\"{o}rner},
}

@article {MR43821,
    AUTHOR = {Davenport, H.},
     TITLE = {On a principle of {L}ipschitz},
   JOURNAL = {J. London Math. Soc.},
  FJOURNAL = {The Journal of the London Mathematical Society},
    VOLUME = {26},
      YEAR = {1951},
     PAGES = {179--183},
      ISSN = {0024-6107},
   MRCLASS = {10.0X},
  MRNUMBER = {43821},
MRREVIEWER = {W. H. Mills},
       DOI = {10.1112/jlms/s1-26.3.179},
       URL = {https://doi.org/10.1112/jlms/s1-26.3.179},
}

@article {MR166155,
    AUTHOR = {Davenport, H.},
     TITLE = {Corrigendum: ``{O}n a principle of {L}ipschitz``},
   JOURNAL = {J. London Math. Soc.},
  FJOURNAL = {The Journal of the London Mathematical Society},
    VOLUME = {39},
      YEAR = {1964},
     PAGES = {580},
      ISSN = {0024-6107},
   MRCLASS = {10.30},
  MRNUMBER = {166155},
       DOI = {10.1112/jlms/s1-39.1.580-t},
       URL = {https://doi.org/10.1112/jlms/s1-39.1.580-t},
}

@article {MR491593,
    AUTHOR = {Davenport, H. and Heilbronn, H.},
     TITLE = {On the density of discriminants of cubic fields. {II}},
   JOURNAL = {Proc. Roy. Soc. London Ser. A},
  FJOURNAL = {Proceedings of the Royal Society. London. Series A.
              Mathematical, Physical and Engineering Sciences},
    VOLUME = {322},
      YEAR = {1971},
    NUMBER = {1551},
     PAGES = {405--420},
      ISSN = {0962-8444},
   MRCLASS = {12A30 (12A45 12A75)},
  MRNUMBER = {491593},
MRREVIEWER = {John V. Armitage},
       DOI = {10.1098/rspa.1971.0075},
       URL = {https://doi.org/10.1098/rspa.1971.0075},
}

@article {MR3873132,
    AUTHOR = {Dummit, David S. and Voight, John},
     TITLE = {The 2-{S}elmer group of a number field and heuristics for
              narrow class groups and signature ranks of units},
   JOURNAL = {Proc. Lond. Math. Soc. (3)},
  FJOURNAL = {Proceedings of the London Mathematical Society. Third Series},
    VOLUME = {117},
      YEAR = {2018},
    NUMBER = {4},
     PAGES = {682--726},
      ISSN = {0024-6115},
   MRCLASS = {11R29 (11R27 11R45)},
  MRNUMBER = {3873132},
MRREVIEWER = {Daniel C. Mayer},
       DOI = {10.1112/plms.12143},
       URL = {https://doi.org/10.1112/plms.12143},
}

@article {MR1131433,
    AUTHOR = {Eskin, Alex and Rudnick, Ze\'{e}v and Sarnak, Peter},
     TITLE = {A proof of {S}iegel's weight formula},
   JOURNAL = {Internat. Math. Res. Notices},
  FJOURNAL = {International Mathematics Research Notices},
      YEAR = {1991},
    NUMBER = {5},
     PAGES = {65--69},
      ISSN = {1073-7928},
   MRCLASS = {11E45 (11D85)},
  MRNUMBER = {1131433},
MRREVIEWER = {Thomas R. Shemanske},
       DOI = {10.1155/S1073792891000090},
       URL = {https://doi.org/10.1155/S1073792891000090},
}

@unpublished{LandesmanLevy,
Author = {Aaron Landesman and Ishan Levy},
Title = {{Homological stability for Hurwitz spaces and applications}},
Year = {2025},
Note = {available at \url{https://arxiv.org/abs/2503.03861}},
Eprint = {arXiv:2503.03861},
}

@book {MR0197380,
    AUTHOR = {Gauss, Carl Friedrich},
     TITLE = {Disquisitiones arithmeticae},
      NOTE = {Translated into English by Arthur A. Clarke, S. J},
 PUBLISHER = {Yale University Press, New Haven, Conn.-London},
      YEAR = {1966},
     PAGES = {xx+472},
   MRCLASS = {10.00 (01.60)},
  MRNUMBER = {0197380},
MRREVIEWER = {W. J. LeVeque},
}

@article {MR3782066,
    AUTHOR = {Ho, Wei and Shankar, Arul and Varma, Ila},
     TITLE = {Odd degree number fields with odd class number},
   JOURNAL = {Duke Math. J.},
  FJOURNAL = {Duke Mathematical Journal},
    VOLUME = {167},
      YEAR = {2018},
    NUMBER = {5},
     PAGES = {995--1047},
      ISSN = {0012-7094},
   MRCLASS = {11R29 (11R45)},
  MRNUMBER = {3782066},
MRREVIEWER = {Frank Henry Thorne},
       DOI = {10.1215/00127094-2017-0050},
       URL = {https://doi.org/10.1215/00127094-2017-0050},
}

@article {MR12640,
    AUTHOR = {Jones, Burton W.},
     TITLE = {A canonical quadratic form for the ring of 2-adic integers},
   JOURNAL = {Duke Math. J.},
  FJOURNAL = {Duke Mathematical Journal},
    VOLUME = {11},
      YEAR = {1944},
     PAGES = {715--727},
      ISSN = {0012-7094},
   MRCLASS = {10.0X},
  MRNUMBER = {12640},
MRREVIEWER = {A. E. Ross},
       URL = {http://projecteuclid.org/euclid.dmj/1077472807},
}

@inproceedings {MR0213362,
    AUTHOR = {Langlands, R. P.},
     TITLE = {The volume of the fundamental domain for some arithmetical
              subgroups of {C}hevalley groups},
 BOOKTITLE = {Algebraic {G}roups and {D}iscontinuous {S}ubgroups ({P}roc.
              {S}ympos. {P}ure {M}ath., {B}oulder, {C}olo., 1965)},
     PAGES = {143--148},
 PUBLISHER = {Amer. Math. Soc., Providence, R.I.},
      YEAR = {1966},
   MRCLASS = {14.50},
  MRNUMBER = {0213362},
MRREVIEWER = {T. Ono},
}

@article {MR2778658,
    AUTHOR = {Malle, Gunter},
     TITLE = {On the distribution of class groups of number fields},
   JOURNAL = {Experiment. Math.},
  FJOURNAL = {Experimental Mathematics},
    VOLUME = {19},
      YEAR = {2010},
    NUMBER = {4},
     PAGES = {465--474},
      ISSN = {1058-6458},
   MRCLASS = {11R29 (11R16 11R21 11R58 11Y40)},
  MRNUMBER = {2778658},
MRREVIEWER = {Abdelmalek Azizi},
       DOI = {10.1080/10586458.2010.10390636},
       URL = {https://doi.org/10.1080/10586458.2010.10390636},
}

@article {MR1727346,
    AUTHOR = {Sawyer, P.},
     TITLE = {Spherical functions on {${\rm SO}_0(p,q)/{\rm SO}(p)\times{\rm
              SO}(q)$}},
   JOURNAL = {Canad. Math. Bull.},
  FJOURNAL = {Canadian Mathematical Bulletin. Bulletin Canadien de
              Math\'{e}matiques},
    VOLUME = {42},
      YEAR = {1999},
    NUMBER = {4},
     PAGES = {486--498},
      ISSN = {0008-4395},
   MRCLASS = {43A90 (33C80)},
  MRNUMBER = {1727346},
MRREVIEWER = {Gestur \'{O}lafsson},
       DOI = {10.4153/CMB-1999-056-5},
       URL = {https://doi.org/10.4153/CMB-1999-056-5},
}

@article {MR3452758,
    AUTHOR = {Sawyer, P.},
     TITLE = {Computing the {I}wasawa decomposition of the classical {L}ie
              groups of noncompact type using the {$QR$} decomposition},
   JOURNAL = {Linear Algebra Appl.},
  FJOURNAL = {Linear Algebra and its Applications},
    VOLUME = {493},
      YEAR = {2016},
     PAGES = {573--579},
      ISSN = {0024-3795},
   MRCLASS = {22E46},
  MRNUMBER = {3452758},
       DOI = {10.1016/j.laa.2015.12.013},
       URL = {https://doi.org/10.1016/j.laa.2015.12.013},
}

@article {MR3719247,
    AUTHOR = {Shankar, Arul and Wang, Xiaoheng},
     TITLE = {Rational points on hyperelliptic curves having a marked
              non-{W}eierstrass point},
   JOURNAL = {Compos. Math.},
  FJOURNAL = {Compositio Mathematica},
    VOLUME = {154},
      YEAR = {2018},
    NUMBER = {1},
     PAGES = {188--222},
      ISSN = {0010-437X},
   MRCLASS = {11G30 (14H20)},
  MRNUMBER = {3719247},
MRREVIEWER = {Robert Wilms},
       DOI = {10.1112/S0010437X17007515},
       URL = {https://doi.org/10.1112/S0010437X17007515},
}

@article {MR2763952,
    AUTHOR = {Wood, Melanie Matchett},
     TITLE = {Rings and ideals parameterized by binary {$n$}-ic forms},
   JOURNAL = {J. Lond. Math. Soc. (2)},
  FJOURNAL = {Journal of the London Mathematical Society. Second Series},
    VOLUME = {83},
      YEAR = {2011},
    NUMBER = {1},
     PAGES = {208--231},
      ISSN = {0024-6107},
   MRCLASS = {11E76 (11R04)},
  MRNUMBER = {2763952},
MRREVIEWER = {A. Peth\H{o}},
       DOI = {10.1112/jlms/jdq074},
       URL = {https://doi.org/10.1112/jlms/jdq074},
}

@article {MR3187931,
    AUTHOR = {Wood, Melanie Matchett},
     TITLE = {Parametrization of ideal classes in rings associated to binary
              forms},
   JOURNAL = {J. Reine Angew. Math.},
  FJOURNAL = {Journal f\"{u}r die Reine und Angewandte Mathematik. [Crelle's
              Journal]},
    VOLUME = {689},
      YEAR = {2014},
     PAGES = {169--199},
      ISSN = {0075-4102},
   MRCLASS = {11E76 (11R04 11R29 13C99)},
  MRNUMBER = {3187931},
MRREVIEWER = {Detlev W. Hoffmann},
       DOI = {10.1515/crelle-2012-0058},
       URL = {https://doi.org/10.1515/crelle-2012-0058},
}

@article {1907.05002,
    AUTHOR = {Liu, Yuan and Wood, Melanie Matchett and Zureick-Brown, David},
     TITLE = {A predicted distribution for {G}alois groups of maximal
              unramified extensions},
   JOURNAL = {Invent. Math.},
  FJOURNAL = {Inventiones Mathematicae},
    VOLUME = {237},
      YEAR = {2024},
    NUMBER = {1},
     PAGES = {49--116},
      ISSN = {0020-9910,1432-1297},
   MRCLASS = {11R45 (11G20 11R29 11R32 11R58 12F10 20P05)},
  MRNUMBER = {4756988},
       DOI = {10.1007/s00222-024-01257-1},
       URL = {https://doi.org/10.1007/s00222-024-01257-1},
}

@article {2005.11533,
    AUTHOR = {Bartel, Alex and Johnston, Henri and Lenstra, Jr., Hendrik W.},
     TITLE = {Arakelov class groups of random number fields},
   JOURNAL = {Math. Ann.},
  FJOURNAL = {Mathematische Annalen},
    VOLUME = {390},
      YEAR = {2024},
    NUMBER = {3},
     PAGES = {4405--4428},
      ISSN = {0025-5831,1432-1807},
   MRCLASS = {11R29 (11R33 11R65 14G40 16E20 60B15)},
  MRNUMBER = {4803478},
MRREVIEWER = {Tommy\ Hofmann},
       DOI = {10.1007/s00208-024-02862-4},
       URL = {https://doi.org/10.1007/s00208-024-02862-4},
}

@article {MR4105790,
    AUTHOR = {Bartel, Alex and Lenstra, Jr., Hendrik W.},
     TITLE = {On class groups of random number fields},
   JOURNAL = {Proc. Lond. Math. Soc. (3)},
  FJOURNAL = {Proceedings of the London Mathematical Society. Third Series},
    VOLUME = {121},
      YEAR = {2020},
    NUMBER = {4},
     PAGES = {927--953},
      ISSN = {0024-6115},
   MRCLASS = {11R29 (11R23 60F99)},
  MRNUMBER = {4105790},
MRREVIEWER = {J\"{u}rgen Kl\"{u}ners},
       DOI = {10.1112/plms.12343},
       URL = {https://doi.org/10.1112/plms.12343},
}

@article {MR759260,
    AUTHOR = {Gerth, III, Frank},
     TITLE = {The {$4$}-class ranks of quadratic fields},
   JOURNAL = {Invent. Math.},
  FJOURNAL = {Inventiones Mathematicae},
    VOLUME = {77},
      YEAR = {1984},
    NUMBER = {3},
     PAGES = {489--515},
      ISSN = {0020-9910},
   MRCLASS = {11R11 (11R29)},
  MRNUMBER = {759260},
MRREVIEWER = {Kenz\^{o} Komatsu},
       DOI = {10.1007/BF01388835},
       URL = {https://doi.org/10.1007/BF01388835},
}

@article {MR887792,
    AUTHOR = {Gerth, III, Frank},
     TITLE = {Extension of conjectures of {C}ohen and {L}enstra},
   JOURNAL = {Exposition. Math.},
  FJOURNAL = {Expositiones Mathematicae. International Journal for Pure and
              Applied Mathematics},
    VOLUME = {5},
      YEAR = {1987},
    NUMBER = {2},
     PAGES = {181--184},
      ISSN = {0723-0869},
   MRCLASS = {11R29 (11R11 11R20)},
  MRNUMBER = {887792},
MRREVIEWER = {H. Yokoi},
}

@article {MR4277275,
    AUTHOR = {Wang, Weitong and Wood, Melanie Matchett},
     TITLE = {Moments and interpretations of the
              {C}ohen-{L}enstra-{M}artinet heuristics},
   JOURNAL = {Comment. Math. Helv.},
  FJOURNAL = {Commentarii Mathematici Helvetici. A Journal of the Swiss
              Mathematical Society},
    VOLUME = {96},
      YEAR = {2021},
    NUMBER = {2},
     PAGES = {339--387},
      ISSN = {0010-2571},
   MRCLASS = {11R29 (11N45 11R32 11R45 60B15)},
  MRNUMBER = {4277275},
MRREVIEWER = {Kevin H. Wilson},
       DOI = {10.4171/cmh/514},
       URL = {https://doi.org/10.4171/cmh/514},
}

@article {MR3858473,
    AUTHOR = {Wood, Melanie Matchett},
     TITLE = {Cohen-{L}enstra heuristics and local conditions},
   JOURNAL = {Res. Number Theory},
  FJOURNAL = {Research in Number Theory},
    VOLUME = {4},
      YEAR = {2018},
    NUMBER = {4},
     PAGES = {Paper No. 41, 22},
      ISSN = {2522-0160},
   MRCLASS = {11R58 (11R11)},
  MRNUMBER = {3858473},
MRREVIEWER = {Mark Rowland Budden},
       DOI = {10.1007/s40993-018-0134-x},
       URL = {https://doi.org/10.1007/s40993-018-0134-x},
}

@article {MR306119,
    AUTHOR = {Birch, B. J. and Merriman, J. R.},
     TITLE = {Finiteness theorems for binary forms with given discriminant},
   JOURNAL = {Proc. London Math. Soc. (3)},
  FJOURNAL = {Proceedings of the London Mathematical Society. Third Series},
    VOLUME = {24},
      YEAR = {1972},
     PAGES = {385--394},
      ISSN = {0024-6115},
   MRCLASS = {10C10},
  MRNUMBER = {306119},
MRREVIEWER = {K. G. Ramanathan},
       DOI = {10.1112/plms/s3-24.3.385},
       URL = {https://doi.org/10.1112/plms/s3-24.3.385},
}

@article {MR936994,
    AUTHOR = {Datskovsky, Boris and Wright, David J.},
     TITLE = {Density of discriminants of cubic extensions},
   JOURNAL = {J. Reine Angew. Math.},
  FJOURNAL = {Journal f\"{u}r die Reine und Angewandte Mathematik. [Crelle's
              Journal]},
    VOLUME = {386},
      YEAR = {1988},
     PAGES = {116--138},
      ISSN = {0075-4102},
   MRCLASS = {11R29 (11R11 11R16)},
  MRNUMBER = {936994},
MRREVIEWER = {Brian Peterson},
       DOI = {10.1515/crll.1988.386.116},
       URL = {https://doi.org/10.1515/crll.1988.386.116},
}

@article {MR3488737,
    AUTHOR = {Ellenberg, Jordan S. and Venkatesh, Akshay and Westerland,
              Craig},
     TITLE = {Homological stability for {H}urwitz spaces and the
              {C}ohen-{L}enstra conjecture over function fields},
   JOURNAL = {Ann. of Math. (2)},
  FJOURNAL = {Annals of Mathematics. Second Series},
    VOLUME = {183},
      YEAR = {2016},
    NUMBER = {3},
     PAGES = {729--786},
      ISSN = {0003-486X},
   MRCLASS = {14H10 (11G25 11R29 14H30 55R80)},
  MRNUMBER = {3488737},
MRREVIEWER = {Benjamin Collas},
       DOI = {10.4007/annals.2016.183.3.1},
       URL = {https://doi.org/10.4007/annals.2016.183.3.1},
}

@article {MR1117339,
    AUTHOR = {Evertse, J.-H. and Gy{\H{o}}ry, K.},
     TITLE = {Effective finiteness results for binary forms with given
              discriminant},
   JOURNAL = {Compositio Math.},
  FJOURNAL = {Compositio Mathematica},
    VOLUME = {79},
      YEAR = {1991},
    NUMBER = {2},
     PAGES = {169--204},
      ISSN = {0010-437X},
   MRCLASS = {11E76 (11D57)},
  MRNUMBER = {1117339},
MRREVIEWER = {A. Peth\H{o}},
       URL = {http://www.numdam.org/item?id=CM_1991__79_2_169_0},
}

@book {MR3586280,
    AUTHOR = {Evertse, Jan-Hendrik and Gy\H{o}ry, K\'{a}lm\'{a}n},
     TITLE = {Discriminant equations in {D}iophantine number theory},
    SERIES = {New Mathematical Monographs},
    VOLUME = {32},
 PUBLISHER = {Cambridge University Press, Cambridge},
      YEAR = {2017},
     PAGES = {xviii+457},
      ISBN = {978-1-107-09761-2},
   MRCLASS = {11-02 (11Dxx 11Rxx)},
  MRNUMBER = {3586280},
MRREVIEWER = {Volker Ziegler},
       DOI = {10.1017/CBO9781316160763},
       URL = {https://doi.org/10.1017/CBO9781316160763},
}

@book {MR4394329,
    AUTHOR = {Evertse, Jan-Hendrik and Gy\H{o}ry, K\'{a}lm\'{a}n},
     TITLE = {Effective results and methods for {D}iophantine equations over
              finitely generated domains},
    SERIES = {London Mathematical Society Lecture Note Series},
    VOLUME = {475},
 PUBLISHER = {Cambridge University Press, Cambridge},
      YEAR = {2022},
     PAGES = {xxiv+216},
      ISBN = {978-1-009-00585-2},
   MRCLASS = {11Dxx (11-02 13-02)},
  MRNUMBER = {4394329},
}

@article {MR437489,
    AUTHOR = {Gy{\H{o}}ry, K.},
     TITLE = {Sur les polyn\^{o}mes \`a coefficients entiers et de discriminant
              donn\'{e}},
   JOURNAL = {Acta Arith.},
  FJOURNAL = {Polska Akademia Nauk. Instytut Matematyczny. Acta Arithmetica},
    VOLUME = {23},
      YEAR = {1973},
     PAGES = {419--426},
      ISSN = {0065-1036},
   MRCLASS = {12A20},
  MRNUMBER = {437489},
MRREVIEWER = {Maurice Mignotte},
       DOI = {10.4064/aa-23-4-419-426},
       URL = {https://doi.org/10.4064/aa-23-4-419-426},
}

@article {MR437490,
    AUTHOR = {Gy{\H{o}}ry, K.},
     TITLE = {Sur les polyn\^{o}mes \`a coefficients entiers et de discriminant
              donn\'{e}. {II}},
   JOURNAL = {Publ. Math. Debrecen},
  FJOURNAL = {Publicationes Mathematicae Debrecen},
    VOLUME = {21},
      YEAR = {1974},
     PAGES = {125--144},
      ISSN = {0033-3883},
   MRCLASS = {12A20},
  MRNUMBER = {437490},
MRREVIEWER = {Maurice Mignotte},
}

@article {MR2155083,
    AUTHOR = {Hanke, Jonathan},
     TITLE = {An exact mass formula for quadratic forms over number fields},
   JOURNAL = {J. Reine Angew. Math.},
  FJOURNAL = {Journal f\"{u}r die Reine und Angewandte Mathematik. [Crelle's
              Journal]},
    VOLUME = {584},
      YEAR = {2005},
     PAGES = {1--27},
      ISSN = {0075-4102},
   MRCLASS = {11E12 (11E57 20G35)},
  MRNUMBER = {2155083},
MRREVIEWER = {Hideshi Takayanagi},
       DOI = {10.1515/crll.2005.2005.584.1},
       URL = {https://doi.org/10.1515/crll.2005.2005.584.1},
}

@book {MR554237,
    AUTHOR = {Serre, Jean-Pierre},
     TITLE = {Local fields},
    SERIES = {Graduate Texts in Mathematics},
    VOLUME = {67},
      NOTE = {Translated from the French by Marvin Jay Greenberg},
 PUBLISHER = {Springer-Verlag, New York-Berlin},
      YEAR = {1979},
     PAGES = {viii+241},
      ISBN = {0-387-90424-7},
   MRCLASS = {12Bxx},
  MRNUMBER = {554237},
}

@inproceedings {MR0212025,
    AUTHOR = {Tamagawa, Tsuneo},
     TITLE = {Ad\`eles},
 BOOKTITLE = {Algebraic {G}roups and {D}iscontinuous {S}ubgroups ({P}roc.
              {S}ympos. {P}ure {M}ath., {B}oulder, {C}olo., 1965)},
     PAGES = {113--121},
 PUBLISHER = {Amer. Math. Soc., Providence, R.I.},
      YEAR = {1966},
   MRCLASS = {10.16 (12.70)},
  MRNUMBER = {0212025},
MRREVIEWER = {T. Ono},
}

@article {MR2996895,
    AUTHOR = {Abbott, John},
     TITLE = {Bounds on factors in {$\Bbb{Z}[x]$}},
   JOURNAL = {J. Symbolic Comput.},
  FJOURNAL = {Journal of Symbolic Computation},
    VOLUME = {50},
      YEAR = {2013},
     PAGES = {532--563},
      ISSN = {0747-7171},
   MRCLASS = {13P05 (68W30)},
  MRNUMBER = {2996895},
MRREVIEWER = {Teresa E. Krick},
       DOI = {10.1016/j.jsc.2012.09.004},
       URL = {https://doi.org/10.1016/j.jsc.2012.09.004},
}

@unpublished{BhargavaHankeShankar,
  author    = {Manjul Bhargava and Jonathan Hanke and Arul Shankar},
  title     = {The mean number of 2-torsion elements in the class groups of $n$-monogenized cubic fields},
  year = {2020},
  note      = {Accepted for publication in \emph{Amer. J. Math.}, preprint available at \url{https://arxiv.org/abs/2010.15744}}
}

@unpublished{Siadeven,
Author = {Artane Siad},
Title = {Monogenic fields with odd class number {Part II}: even degree},
Year = {2020},
Note = {available at \url{https://arxiv.org/abs/2011.08842}}
}

@unpublished{Siadrelative,
Author = {Artane Siad},
Title = {Global fields with odd ordinary, narrow, and oriented relative class numbers},
Year = {2022},
Note = {in preparation}
}

@article{Ashvin,
	author = {Ashvin A. Swaminathan},
    title = {A new parametrization for ideal classes in rings defined by binary forms, and applications},
	doi = {doi:10.1515/crelle-2023-0006},
	journal = {Journal f{\"u}r die reine und angewandte Mathematik (Crelles Journal)},
	lastchecked = {2023-06-01},
	number = {798},
	pages = {143--191},
	url = {https://doi.org/10.1515/crelle-2023-0006},
	volume = {2023},
	year = {2023},
	bdsk-url-1 = {https://doi.org/10.1515/crelle-2023-0006}}

@unpublished{Breen,
Author = {Benjamin Breen},
Title = {{The 2-Selmer group of $S_n$-number fields of even degree}},
Year = {2021},
Note = {available at \url{https://arxiv.org/abs/2110.00197}},
}

@article {2110.07712,
    AUTHOR = {Lemke Oliver, Robert and Wang, Jiuya and Wood, Melanie
              Matchett},
     TITLE = {The average size of 3-torsion in class groups of 2-extensions},
   JOURNAL = {Forum Math. Pi},
  FJOURNAL = {Forum of Mathematics. Pi},
    VOLUME = {13},
      YEAR = {2025},
     PAGES = {Paper No. e19, 43},
      ISSN = {2050-5086},
   MRCLASS = {11R29 (11R45)},
  MRNUMBER = {4959456},
       DOI = {10.1017/S2050508625000009},
       URL = {https://doi.org/10.1017/S2050508625000009},
}

@article {2207.05592,
    AUTHOR = {Bhargava, Manjul and Shankar, Arul and Wang, Xiaoheng},
     TITLE = {Squarefree values of polynomial discriminants {II}},
   JOURNAL = {Forum Math. Pi},
  FJOURNAL = {Forum of Mathematics. Pi},
    VOLUME = {13},
      YEAR = {2025},
     PAGES = {Paper No. e17, 57},
      ISSN = {2050-5086},
   MRCLASS = {11N35 (11C08 11H06 11R09)},
  MRNUMBER = {4904353},
       DOI = {10.1017/fmp.2025.9},
       URL = {https://doi.org/10.1017/fmp.2025.9},
}

@unpublished{1108.3580,
Author = {Jonathan Hanke},
Title = {{The Structure of Masses of rank $n$ Quadratic Lattices of varying determinant over number fields}},
Note = {available at \url{https://arxiv.org/abs/1108.3580}},
Year = {2011},
}

@unpublished{1512.03035,
Author = {Manjul Bhargava and Arul Shankar and Xiaoheng Wang},
Title = {{Geometry-of-numbers methods over global fields I: Prehomogeneous vector spaces}},
Year = {2015},
Eprint = {arXiv:1512.03035},
}

@article {MR2184814,
    AUTHOR = {Achter, Jeffrey D.},
     TITLE = {The distribution of class groups of function fields},
   JOURNAL = {J. Pure Appl. Algebra},
  FJOURNAL = {Journal of Pure and Applied Algebra},
    VOLUME = {204},
      YEAR = {2006},
    NUMBER = {2},
     PAGES = {316--333},
      ISSN = {0022-4049},
   MRCLASS = {11R58 (11G20 11R29)},
  MRNUMBER = {2184814},
MRREVIEWER = {Ernst-Ulrich Gekeler},
       DOI = {10.1016/j.jpaa.2005.04.003},
       URL = {https://doi.org/10.1016/j.jpaa.2005.04.003},
}

@article {MR3705261,
    AUTHOR = {Boston, Nigel and Wood, Melanie Matchett},
     TITLE = {Non-abelian {C}ohen-{L}enstra heuristics over function fields},
   JOURNAL = {Compos. Math.},
  FJOURNAL = {Compositio Mathematica},
    VOLUME = {153},
      YEAR = {2017},
    NUMBER = {7},
     PAGES = {1372--1390},
      ISSN = {0010-437X},
   MRCLASS = {11G20 (11R11 11R29 11R45 11R58)},
  MRNUMBER = {3705261},
MRREVIEWER = {St\'{e}phane R. Louboutin},
       DOI = {10.1112/S0010437X17007102},
       URL = {https://doi.org/10.1112/S0010437X17007102},
}

@article {MR3909900,
    AUTHOR = {Wood, Melanie Matchett},
     TITLE = {Nonabelian {C}ohen-{L}enstra moments},
      NOTE = {With an appendix by the author and Philip Matchett Wood},
   JOURNAL = {Duke Math. J.},
  FJOURNAL = {Duke Mathematical Journal},
    VOLUME = {168},
      YEAR = {2019},
    NUMBER = {3},
     PAGES = {377--427},
      ISSN = {0012-7094},
   MRCLASS = {11R29 (11R45)},
  MRNUMBER = {3909900},
MRREVIEWER = {Christopher Frei},
       DOI = {10.1215/00127094-2018-0037},
       URL = {https://doi.org/10.1215/00127094-2018-0037},
}

\end{document}